\documentclass{article}
\usepackage{graphicx} 
\usepackage{amsmath}
\usepackage{amssymb}
\usepackage{subfigure}
\usepackage{caption}
\usepackage{amsthm}%
\usepackage{tikz-cd}
\usepackage{comment}
\usepackage{geometry}

\newtheorem{theorem}{Theorem}%  meant for continuous numbers

\newtheorem{proposition}[theorem]{Proposition}% 
\newtheorem{lemma}[theorem]{Lemma}
\newtheorem{corollary}[theorem]{Corollary}% 

\numberwithin{theorem}{section}

\title{Symmetries of $(3, 6)$-Fullerenes}

\author{Linda Green, Yadunand Sreelesh, Vibhu Gomatam}

\begin{document}

\maketitle

\abstract{\noindent A $(3, 6)$-fullerene is a cubic planar graph whose faces all have 3 or 6 sides. We give an exact enumeration of $(3, 6)$-fullerenes with $V$ vertices and each of five possible symmetry types: $\star 332$, $332$, $2 \star 2$, $\star 222$,  and $222$, in orbifold notation. We use this enumeration, together with some constructions, to resolve three  conjectures about the existence of $(3, 6)$-fullerenes with $2 \star 2$, $\star 222$, and $222$ symmetry for given numbers of vertices.}

%% Keywords

\vspace{0.5 cm}
\noindent
Keywords: fullerene, spherical symmetry, polyhedron, hexagonal tiling, trivalent graph, planar graph, two-faced map

%%  The body

\section{Introduction}
\label{sec-intro}
A \emph{$(3, 6)$-fullerene} is a finite, connected, cubic, planar graph whose faces all have three or six sides, with a fixed planar embedding.  We will say that two  $(3, 6)$-fullerenes are equivalent if they are not only isomorphic as graphs but if there is also an orientation-preserving homeomorphism of the plane that is a graph isomorphism. Thus, left-handed and right-handed $(3,6)$-fullerenes are considered distinct. We refer to faces with three sides as  ``triangles'' and faces with six sides as ``hexagons'', even though these faces may not have straight edges and may be unbounded. We will sometimes consider a $(3,6)$-fullerene to be embedded in the sphere instead of the plane, using the one-point compactification of the plane.

It is known that $(3, 6)$-fullerenes must have one of five possible symmetry types. In orbifold notation, these symmetry types are $\star 332$, $332$, $2\star 2$, $\star 222$,  and $222$  \cite{deza2005zigzag}. In Sch\"onflies notation, these symmetry types are $T_d$, $T$, $D_{2d}$, $D_{2h}$, and $D_2$, respectively. This paper enumerates the $(3,6)$-fullerenes with each symmetry type for every possible number of vertices. 

Section~\ref{sec-background} provides background information about ``signatures'': the signature of a $(3,6)$-fullerene is a triple of numbers that describes the position of its triangles among its hexagons. Section~\ref{sec-symmDefs} connects alternative ideas of symmetry, with proofs postponed to Section~\ref{sec-symmProof}. Section~\ref{sec-sigAndSymm} relates signatures and symmetry type. Section~\ref{sec-countsBySymmType} uses signatures to enumerate the $(3,6)$-fullerenes with each symmetry type. Section~\ref{sec-conj1And2} proves three conjectures from \cite{deza2005zigzag} about the existence of $(3, 6)$-fullerenes with $\star 222$, $2\star 2$, and $222$ symmetry for given numbers of vertices.

This paper builds on the enumeration of (3, 6)-fullerenes and the count of those with 3-fold rotational symmetry, mirror symmetry, and both kinds of symmetry, established in \cite{green2025enumerate}. Here we go further in two ways. First, we refine that count to give the exact number of $(3, 6)$-fullerenes of each of the five symmetry types, for every possible number of vertices (Section 5), which requires the new signature-to-symmetry-type dictionary of Section 4. Second, we use this refined enumeration, together with explicit constructions, to resolve the existence conjectures of Deza and Dutour (\cite{deza2005zigzag}, Conjecture 5.7) concerning which vertex counts admit $(3, 6)$-fullerenes with $2\star 2$, $\star 222$,  and $222$ symmetry (Section 6). The corresondence between the spherical symmetry type of a $(3, 6)$-fullerene and the wallpaper symmetry type of 
its hexagonal tiling cover is also new to this paper (Propositions 3.1 and 3.2, proved in Section 7). 

The results in this paper can be applied to polyhedra whose faces are all triangles and hexagons and have three faces around each vertex, but are not necessarily
convex. Every $(3, 6)$-fullerene can be realized as such a polyhedron \cite{green2024polyhedra}.

\section{Background} 
\label{sec-background}

Consider a regular hexagonal grid on the plane and let $Z$ be the vertices of a superimposed parallelogram grid, where each vertex of the parallelogram grid lies in the center of a hexagon. Let $\Gamma$ be the group of isometries of the plane generated by $180^\circ$ rotations around the points of $Z$. These isometries preserve both $Z$ and the hexagonal grid. Let $Q = \Gamma \backslash \mathbb{R}^2$ be the orbifold quotient space of the plane under this group action, where elements of the quotient space are the orbits of points in the plane under the action of $\Gamma$. The hexagons that contain the vertices of the parallelogram grid at their centers are called \emph{special hexagons}, and they project to triangles with cone points in the quotient orbifold. The hexagons that are not special hexagons project to hexagons in the quotient orbifold. The entire plane quotients to a surface that is homeomorphic to the sphere \cite{green2024polyhedra}. The vertices and edges of the hexagonal grid quotient to a cubic, planar graph whose faces are all triangles and hexagons, embedded in the sphere via this homeomorphism.  Previous work \cite{green2024polyhedra} shows that every $(3,6)$-fullerene can be represented this way, as the quotient of a hexagonal tiling of the plane under a group of isometries $\Gamma$ generated by $180^\circ$ rotations around the vertices of a superimposed parallelogram grid. See Figure~\ref{fig-upstairsDownstairs} for an example. The quotient $Q$ inherits a natural metric from $\mathbb{R}^2$, where the distance between two orbits is given by $d(\Gamma.x, \Gamma.y) = \inf_{\gamma \in \Gamma} d(x, \gamma(y))$ \cite{bridson2013metric}. We will refer to $Q$ with this metric as a \emph{quotient representation of a $(3,6)$-fullerene}.
%NOTE - To see that the parallelogram grid can be drawn so that some edges of parallelograms are vertical, write the original parallelogram vectors as linear combinations of vertical and SW->NE vectors and then taking a linear combination of these to get a vertical vector.
 Instead of working ``downstairs" with the $(3, 6)$-fullerene directly, many symmetry arguments become simpler by working ``upstairs" in the hexagonal tiling of the plane that covers the quotient representation. 

Given a hexagonal tiling with a superimposed parallelogram grid, the hexagonal tiling can be oriented so that the hexagons form vertical columns as in Figure~\ref{fig-upstairsDownstairs} and the parallelogram grid can be drawn so that the edges of the parallelograms include vertical segments. A vertical column containing special hexagons in the hexagonal tiling upstairs is called a \emph{spine column}. It projects downstairs to a strip of hexagons capped by two triangles, called a \emph{spine}. We say that two spine columns are \emph{consecutive} if there are no other spine columns between them. The vertical columns between pairs of consecutive spine columns are called \emph{belt} columns. They project downstairs to bands of hexagons called \emph{belts}. 

\begin{figure}

\centering

\includegraphics[height = 6 cm]{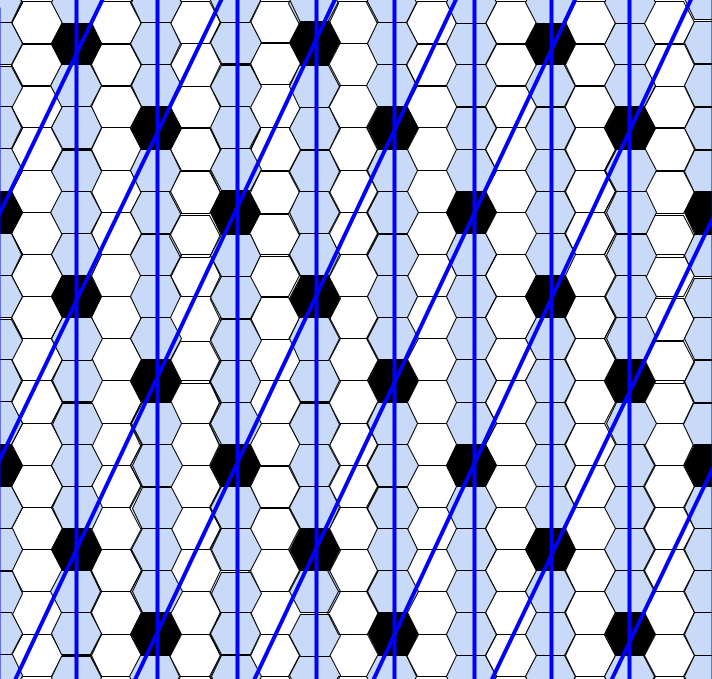}
\includegraphics[height = 6 cm]{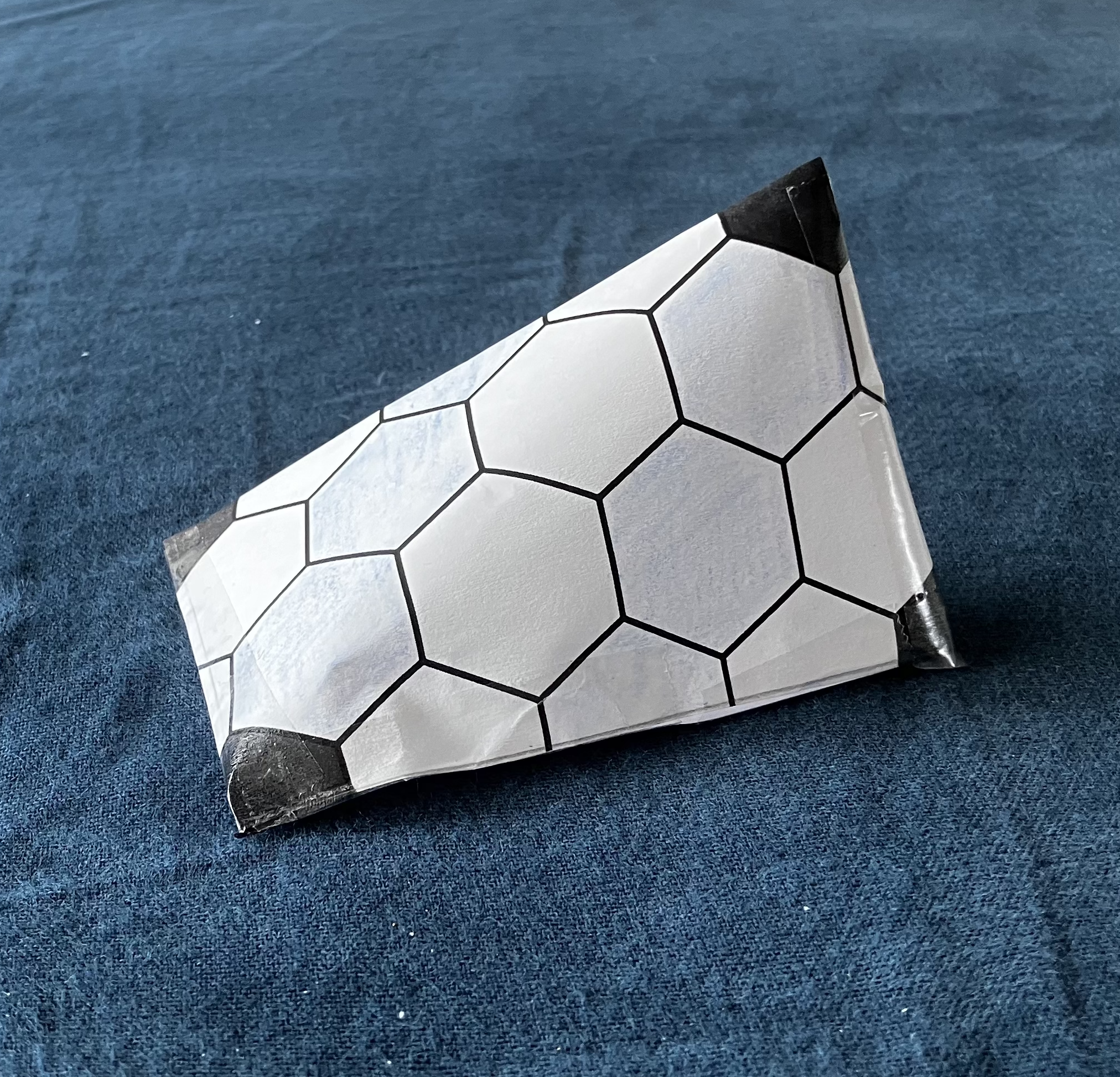}
\caption{A hexagonal tiling with a superimposed parallelogram grid produces a $(3, 6)$-fullerene as its quotient under the group generated by $180^\circ$ rotations around the vertices of the parallelogram grid. Special hexagons are shaded black and spines are shaded light blue.}
\label{fig-upstairsDownstairs}
\end{figure}

As detailed in \cite{green2024polyhedra}, every $(3, 6)$-fullerene can also be fully described by a \emph{signature}, which is  triple of non-negative integers $(s, b, f)$, where $f$ is defined modulo $s+1$. The first number, $s$, gives the number of hexagons that lie in the vertical column between two special hexagons, which is also called the \emph{length of a spine}. The second number, $b$, gives the number of belt columns between two consecutive spine columns.  
The third number, $f$, is called the \emph{offset}. It can be found by translating a special hexagon in an approximately southwest to northeast direction along other hexagons, through $b+1$ vertical columns until the next spine column is reached. The offset $f$ is the number of hexagons below a special hexagon that the translated hexagon lands. For example, in Figure~\ref{fig-upstairsDownstairs}, the offset is 3 and the signature is $(5, 1, 3)$

Signatures are not unique: each $(3, 6)$-fullerene has three possible signatures corresponding to three orientations of the plane: the original orientation, the plane rotated by $60^\circ$ counterclockwise, and the plane rotated by $120^\circ$ counterclockwise. (The plane rotated by $180^\circ$ counterclockwise is identical to the original.) For example, in Figure~\ref{fig-upstairsDownstairs}, the three signatures are $(5, 1, 3)$, $(3, 2, 3)$, and $(11, 0, 2)$. For any $(3, 6)$-fullereine, the three equivalent signatures $(s_1, b_1, f_1)$, $(s_2, b_2, f_2)$, and $(s_3, b_3, f_3)$ are related by six equations given in \cite{green2024polyhedra}, and restated here for convenience. These equations are used in the proof of the existence of $(3, 6)$-fullerenes with specified symmetry type and number of vertices in Section~\ref{sec-conj1And2}.

\begin{equation}
\label{eqn-s2}
s_2 = j_2(b_1 + 1) - 1
\end{equation}
where $j_2$ is the order of $f_1$ in $\mathbf{Z}_{s_1 + 1}$, that is, the smallest positive integer such that $j_2 \cdot f_1 \equiv 0 \pmod {s_1 +1 }$,

\begin{equation}
\label{eqn-b2}
b_2 = \dfrac{h - 2s_2}{2s_2 + 2}
\end{equation}
 where $h$, the number of hexagons, is given by $h = 2s_1  b_1 + 2s_1 + 2b_1$,

\begin{equation}
\label{eqn-f2}
f_2 \equiv  - p_2(b_1 + 1) - (b_2 +1)\pmod {s_2 + 1}
\end{equation}
where $p_2$ is the smallest positive integer such that  $p_2 \cdot f_1 \equiv (b_2 + 1) \pmod{s_1 + 1}$,
\begin{equation}
\label{eqn-s3}
s_3 = j_3(b_1 + 1) - 1
\end{equation} 
where $j_3$ is the order of $f_1 + b_1 + 1$ in $\mathbf{Z}_{s_1 + 1}$, that is, the smallest positive integer such that $j_3 \cdot(f_1 + b_1 + 1) \equiv 0 \pmod {s_1 + 1}$,
\begin{equation}  
\label{eqn-b3}
b_3 = \dfrac{h - 2s_3}{2s_3 + 2}
\end{equation}
 where $h$, the number of hexagons, is given by $h = 2s_1  b_1 + 2s_1 + 2b_1$,
 and
\begin{equation}
\label{eqn-f3}
f_3 \equiv  - p_3(b_1 + 1) \pmod{s_3 + 1}
\end{equation}
where $p_3$ is the smallest positive integer with $p_3 \cdot (f_1 + b_1 + 1) \equiv (b_3 + 1) \pmod{s_1 + 1}$.

In some cases, these equations generate \emph{coinciding signatures}: that is, all three signatures are exactly the same. This situation corresponds to the $(3,6)$-fullerene having \emph{3-fold rotational symmetry} \cite{green2025enumerate}, meaning that the $(3, 6)$-fullerene is equivalent to
an embedding on the sphere in which a 3-fold rotation of the sphere induces a graph
isomorphism. For example, the $(3, 6)$-fullerene with signature $(8, 2, 3)$ has coinciding signatures and 3-fold rotational symmetry. See Figure~\ref{fig-823}.

The number of vertices in a $(3, 6)$-fullerene is related to the signature via the equation
\begin{equation}
\label{eqn-vCount}
V = 4(s+1)(b+1),
\end{equation}
where $(s, b, f)$ is any of the three signatures \cite{green2024polyhedra}. It follows that the number of vertices is always a multiple of four. A standard Euler characteristic argument shows that the number of triangles in a $(3, 6)$-fullerene is always four \cite{grunbaum_motzkin_1963}, see also \cite{green2024polyhedra}.

All $(3,6)$-fullerenes are 3-connected with the exception of one family of 2-connected planar graphs, denoted by $G_n$ in Section 4 of \cite{deza2005zigzag} and referred to as godseyes in Section 2 of \cite{green2024polyhedra}. The planar embedding of a godseye is composed of
two triangles, surrounded by one or more nested pairs of hexagons that meet along
opposite sides, with a final pair of adjacent triangles on the outside. See Figure~\ref{fig-godseye}.

\begin{figure}[ht]
\begin{center}
\includegraphics[height = 4 cm]{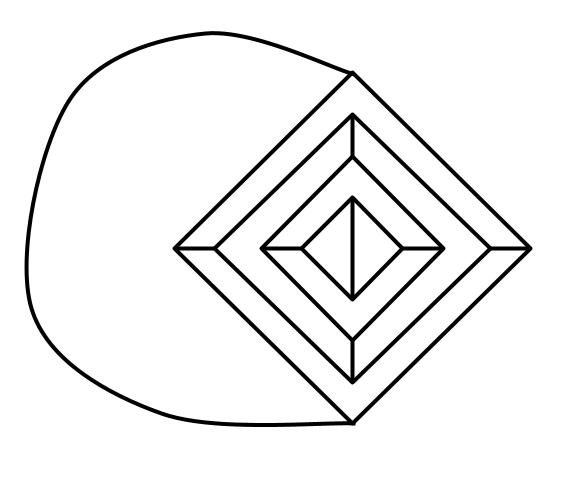}
\end{center}
\caption{A godseye is the only type of $(3, 6)$-fullerene that is not 3-connected.}
\label{fig-godseye}
\end{figure}

\section{Symmetry Groups}
\label{sec-symmDefs}

Let $F$ be the graph of a $(3, 6)$-fullerene. The graph $F$ can always be embedded in the sphere in an asymmetric way, but it may also be embedded in such a way that there are isometries of the sphere that preserve its vertices and edges.  For a fixed embedding in the sphere $f:F \to S^2$, let $Isom(S^2, F, f)$ be the group of isometries of the sphere that \emph{preserve} the vertices and edges of $f(F)$, meaning that vertices of $f(F)$ are taken to vertices of $f(F)$ and edges of $f(F)$ are taken to edges of $f(F)$. We will say that the embedding $f: F \to S^2$ is \emph{maximally symmetric} if there does not exist an embedding $f': F \to S^2$ such that $Isom(S^2, F, f)$ is a proper subgroup of $Isom(S^2, F, f')$. We will define the \emph{symmetry group} of a $(3,6)$-fullerene as the isometry group of a maximally symmetric embedding into the sphere. Proposition~\ref{prop-symmEquivSummary}  shows that this definition is well-defined, because the symmetry group of any maximally symmetric embedding of a $(3,6)$-fullerene is isomorphic to the isometry group of its quotient representation $Q$ described in Section~\ref{sec-background}.  

\begin{proposition}
    Suppose $Q$ is a quotient representation of a $(3,6)$-fullerene with edge and vertex graph $F$. Then $Isom(Q) \cong Isom(S^2, F, f_0)$, where $f_0: F \to S^2$ is any maximally symmetric embedding. 
    \label{prop-symmEquivSummary}
\end{proposition}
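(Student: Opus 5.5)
We prove the isomorphism by passing through the combinatorial automorphism group. Let $\mathrm{Aut}(F)$ be the group of graph automorphisms of $F$ that preserve the embedding up to possibly reversing orientation, i.e. map facial cycles to facial cycles. We aim to show
\[
Isom(Q) \cong \mathrm{Aut}(F) \cong Isom(S^2, F, f_0).
\]

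\textbf{First isomorphism: $Isom(Q) \cong \mathrm{Aut}(F)$.} An isometry of $Q$ must send cone points to cone points, since these are the only points where $Q$ fails to be locally Euclidean. It must also send hexagon centers to hexagon centers and vertices to vertices. The reason is that $Q$ is a flat cone-surface whose metric is built from the hexagonal tiling: every geodesic segment from a cone point to its nearest vertices has the same length, and the structure is locally that of the regular tiling. More carefully, lift the isometry to the universal orbifold cover $\mathbb{R}^2$. There it becomes an isometry of the plane that normalizes $\Gamma$ and maps $Z$ to $Z$. Such an isometry maps the set of special hexagon centers to itself. This forces it to preserve the hexagonal lattice of hexagon centers, because $Z$ is a sublattice coset of that lattice and the tiling is determined by any one of its hexagon centers together with the direction of the grid. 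The direction is pinned down up to $60^\circ$ because $Z$ spans the lattice rationally.

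Rather than relying on that density argument, I would argue directly. Cone points are centers of faces, so the cell structure of $Q$ is canonical: it is the Voronoi-type decomposition determined by the flat metric with hexagonal holonomy. Hence every isometry of $Q$ induces an automorphism of $F$. Conversely, any automorphism of $F$ extends face by face to an isometry of $Q$, since each face of $Q$ is a regular triangle or regular hexagon of the same edge length, and the gluings are isometric. These two constructions are mutually inverse, giving $Isom(Q) \cong \mathrm{Aut}(F)$.

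\textbf{Second isomorphism: $\mathrm{Aut}(F) \cong Isom(S^2, F, f_0)$.} For any embedding $f$, restricting isometries to $f(F)$ gives a homomorphism $Isom(S^2,F,f) \to \mathrm{Aut}(F)$. This map is injective, because an isometry of $S^2$ fixing every vertex of a graph with at least three non-collinear vertices is the identity. So it suffices to build one embedding $f$ whose isometry group realizes all of $\mathrm{Aut}(F)$; maximality then forces $Isom(S^2,F,f_0) \cong \mathrm{Aut}(F)$ for every maximally symmetric $f_0$.

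In the 3-connected case, I would invoke a symmetric realization theorem: the Koebe–Andreev–Thurston midsphere (circle packing) realization, or Mani's theorem, which realizes every automorphism group of a 3-connected planar graph by isometries of a convex polyhedron canonically placed on the sphere. Radial projection then gives $f$.

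For the godseyes, which are not 3-connected, I would construct an explicit symmetric embedding by hand. Their automorphism group is a small dihedral-type group, and it can be realized by placing the nested hexagon pairs as concentric bands about an axis.

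There is one point to check: elements of $\mathrm{Aut}(F)$ reversing orientation must correspond to reflections both in $Q$ and in $S^2$. This holds on both sides by construction, because the face-by-face extension respects orientation type.

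\textbf{Main obstacle.} I expect the hardest step to be the first one: showing that every metric isometry of $Q$ is cellular, i.e. preserves the vertex/edge graph. The difficulty is that a priori the isometry group of a flat cone sphere might not see the tiling. I would resolve this by working upstairs. Given an isometry of $Q$, lift it to an isometry $\tilde\phi$ of $\mathbb{R}^2$ with $\tilde\phi\Gamma\tilde\phi^{-1}=\Gamma$; the lift exists because $\mathbb{R}^2 \to Q$ is the universal orbifold cover. Then $\tilde\phi(Z)=Z$. Next, check that $\tilde\phi$ preserves the hexagonal tiling. If it does not, $Q$ could have extra isometries, for example when the parallelogram grid is rectangular while the tiling is not aligned with it. In that case the claim would fail, so it may be necessary to interpret $Isom(Q)$ as isometries preserving the cell structure, which is the interpretation I would adopt if the metric-only version breaks.
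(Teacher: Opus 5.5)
Your argument is correct once you adopt your fallback reading of $Isom(Q)$. That reading is in fact the paper's definition: $Isom(Q)$ is $Isom(Q,F,i)$, the isometries of $Q$ that preserve the embedded graph. So delete the claim that the cell structure of $Q$ is canonical for the flat metric, since it is false. The holonomy of $\Gamma\backslash\mathbb{R}^2$ is only $\{\pm 1\}$. Metric isometries of $Q$ correspond to $Isom(\mathbb{R}^2,Z)/\Gamma$ (Lemma~\ref{lem-isomorphOfGroups}), and this group can be strictly larger than $Isom(\mathbb{R}^2,Z\cup W)/\Gamma$, exactly as in your rectangular-grid example. Your ``$Z$ spans the lattice rationally'' argument fails for the same reason.

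A smaller slip: the triangular faces of $Q$ are not flat regular triangles. They are regular hexagons folded by the $180^\circ$ rotation, with cone angle $\pi$ at the center and corner angles $120^\circ$. Your face-by-face extension still works, because this cone has isometry group $D_6/C_2\cong S_3$, which realizes every permutation of its corners. With these repairs, your isomorphism $Isom(Q)\cong \mathrm{Aut}(F)$ is Lemma~\ref{lem-QIsoMCG1} in combinatorial language. The group $MCG(S^2,F,f)$ is canonically the group of face-preserving automorphisms, and the paper's Alexander-trick surjectivity step is your face-by-face extension.

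The real difference is how you produce a sphere embedding that realizes the whole group. You use Mani's theorem (or the Koebe--Andreev--Thurston midsphere realization) for 3-connected $F$, plus a hand construction for godseyes. The paper instead transports the finite group $Isom(Q)$ to $S^2$ by a homeomorphism $Q\to S^2$. It then conjugates it into $O(3)$ using the theorem that finite groups of homeomorphisms of $S^2$ are topologically conjugate to orthogonal ones \cite{kolev2006} (Lemma~\ref{lem-IsoIsoMCG}). That argument is uniform in connectivity, so no godseye construction is needed, whereas yours is only asserted. Your route gives more for 3-connected $F$, namely a convex polyhedral realization, but it costs you the case split.

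Your route also leaves a small hole for godseyes. For a 2-connected graph, the faces of an arbitrary embedding $f_0$ need not be the faces of $Q$. So restriction sends $Isom(S^2,F,f_0)$ into the automorphisms preserving the $f_0$-faces, not into your $\mathrm{Aut}(F)$. You need the fact that any two embeddings of a godseye differ by a homeomorphism of $S^2$, which makes these two groups conjugate. The paper gets this in Lemma~\ref{lem-allMCG} from Whitney's 2-isomorphism theorem.

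Finally, your orientation check is unnecessary for an abstract group isomorphism. It is also misstated: orientation-reversing symmetries need not be reflections, since rotoreflections occur in $D_{2d}$ and the antipodal map occurs in $D_{2h}$.
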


In the following proposition, the symmetry type of the $(3, 6)$-fullerene refers to the symmetry group of a maximally symmetric embedding of the $(3,6)$-fullerene, or equivalently, the symmetry group of the quotient representation $Q$. The symmetry type of the hexagonal tiling cover refers to the group of isometries of the plane that preserve the tiling by regular hexagons and also preserve the centers of special hexagons, i.e. the vertices of the superimposed parallelogram grid used to make the quotient $(3,6)$-fullerene $Q$. Spherical and wallpaper symmetry types are written in orbifold notation.

\begin{proposition}
\begin{enumerate}
   \item A (3,6)-fullerene has $\star 332$ symmetry if its hexagonal tiling cover has $\star 632$ symmetry.

    \item A (3,6)-fullerene has $332$ symmetry if its hexagonal tiling cover has $632$ symmetry.
    
   \item A (3,6)-fullerene has $2\star 2$ symmetry if its hexagonal tiling cover has $2 \star 22$ symmetry.

    \item A (3,6)-fullerene has $\star 222$ symmetry if its hexagonal tiling cover has $\star 2222$ symmetry.

    \item A (3,6)-fullerene has $222$ symmetry if its hexagonal tiling cover has $2222$ symmetry.

\end{enumerate}
\label{prop-upstairsDownstairsSymm}
\end{proposition}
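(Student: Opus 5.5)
The approach is to relate the isometry group of the quotient $Q = \Gamma\backslash\mathbb{R}^2$ to the group $N$ of isometries of the plane that preserve both the hexagonal tiling and the set $Z$ of centers of special hexagons. By Proposition~\ref{prop-symmEquivSummary}, it suffices to compute $Isom(Q)$. The key claim is $Isom(Q) \cong N/\Gamma$. Once this is established, each item of the statement reduces to computing $N/\Gamma$ for the five possible wallpaper groups $N$ and identifying the resulting spherical group in orbifold notation.

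\textbf{Step 1: the isomorphism $Isom(Q)\cong N/\Gamma$.} Every element of $N$ normalizes $\Gamma$. Indeed, $\Gamma$ is generated by half-turns about points of $Z$, and conjugating the half-turn about $z$ by $g\in N$ gives the half-turn about $g(z)\in Z$. Hence each $g \in N$ descends to an isometry of $Q$, and the kernel of this map is $\Gamma$. For surjectivity, I would use that $\mathbb{R}^2 \to Q$ is the orbifold universal cover. The cone points of $Q$ are the images of $Z$, plus possibly other half-turn centers of $\Gamma$; a standard check shows these are exactly the lattice-midpoint classes, four cone points in all. An isometry of $Q$ therefore lifts to an isometry $\tilde g$ of $\mathbb{R}^2$ normalizing $\Gamma$.

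It remains to check that the lift preserves the tiling and $Z$. Here we need that isometries of $Q$ respect the graph structure. The vertices and edges of $Q$ are determined metrically: the four cone points sit at the centers of the four triangles, and the hexagonal structure near them is forced. Alternatively, we can restrict attention to isometries preserving the $(3,6)$-fullerene cell structure, which is the notion of symmetry used throughout the paper. With either route, $\tilde g$ maps cone-point preimages to cone-point preimages, so it preserves the centers of special hexagons. Being a local isometry of the tiling elsewhere, it preserves the hexagonal tiling. This gives $N/\Gamma \cong Isom(Q)$.

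\textbf{Step 2: compute the quotients.} $N$ contains $\Gamma$, which is of type $2222$. The group of the hexagonal tiling is $\star 632$, so $N$ is a subgroup of $\star 632$ containing $\Gamma$. I would then go through the five cases.

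\begin{itemize}
\item $N$ of type $\star 632$: the quotient $N/\Gamma$ acts on the sphere $Q$ with fixed-point data coming from the mirrors and the rotation centers of orders 6 and 3. Computing orbifold Euler characteristics gives $\chi(\star 632)/\chi(2222)$-type index relations. The index is $[N:\Gamma]=24$, which is exactly the order of $T_d=\star 332$. The orbifold $\mathbb{R}^2/N$ is the same whether computed upstairs or as $Q/(N/\Gamma)$, so the spherical group is determined by its orbifold signature together with the index. Rotation orders are divided as follows: 6-fold centers upstairs become 3-fold downstairs, because the half-turn lies in $\Gamma$. This turns $\star 632$ into $\star 332$.
\item $N$ of type $632$: by the same reasoning the result is $332$.
\item $N$ of type $2\star 22$: the result is $2\star 2$.
\item $N$ of type $\star 2222$: the result is $\star 222$.
\item $N$ of type $2222$: the result is $222$.
\end{itemize}

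For each case I would identify which cone points and mirrors of $\mathbb{R}^2/N$ come from elements of $\Gamma$ and which do not, and then read off the spherical orbifold. I would also remark that $N=\Gamma$ itself gives the trivial group downstairs in the appropriate sense, noting that every $(3,6)$-fullerene has at least the $222$ symmetry coming from $N \supsetneq \Gamma$ generated by half-turns at lattice midpoints.

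\textbf{Main obstacle.} I expect the main obstacle to be surjectivity in Step 1, namely showing that every isometry of $Q$ (or of a maximally symmetric embedding) lifts to a tiling-preserving isometry of the plane. The hardest point is the cone points: the lift must send special-hexagon centers to special-hexagon centers, rather than to the other half-turn centers of $\Gamma$ at lattice midpoints. I would handle this by noting that the latter are not cone points of $Q$ in the hexagon interiors, or are vertices or edge midpoints rather than triangle centers. The four genuine triangle centers are intrinsically distinguished by their cone angle $\pi$ together with the local cell structure.
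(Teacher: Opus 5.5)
Your Step 1 and the halving mechanism in Step 2 are the paper's own route. Step 1 uses normality of $\Gamma$, lifting through the cover $\mathbb{R}^2-Z\to Q-q(Z)$, and $Isom(Q)\cong N/\Gamma$, as in Lemmas~\ref{lem-normal}, \ref{lem-lift}, \ref{lem-isomorphOfGroups}. Step 2 is Lemma~\ref{lem-stabReduction} and Proposition~\ref{lem-generalReduction}: stabilizers are unchanged off $Z$ and divided by the half-turn at $Z$.

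\textbf{The gap.} You never show \emph{which} singular point of $\mathbb{R}^2/N$ is the image of $Z$, and that is the entire content of the five cases. A point of $Z$ only needs a half-turn in its stabilizer, which leaves real alternatives. In $\star 632$ and $632$, $Z$ could a priori sit at the 2-fold points, giving $\star 63$ or $63$. In $2\star 22$ it could sit at the $C_2$ cone point, giving $\star 22$ of order 4, instead of at a $D_2$ corner. Saying that 6-fold centers become 3-fold ``because the half-turn lies in $\Gamma$,'' and that ``the result is $2\star 2$,'' presupposes the answer. Your proposed justification does not supply it either:
\begin{itemize}
\item Euclidean orbifolds have $\chi=0$, so $\chi(\star 632)/\chi(2222)$ is $0/0$.
\item Determining the spherical group from $\mathbb{R}^2/N$ plus the index is circular, because $\mathbb{R}^2/N$ and $S^2/(N/\Gamma)$ differ exactly at the image of $Z$.
\end{itemize}

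\textbf{How the paper and your own idea close it.} The paper settles this case by case:
\begin{itemize}
\item For $\star 632$ and $632$: $\Phi(H)$ permutes the four triangles, so it has no element of order 6. This forces the $D_6$ (resp.\ $C_6$) point into $Z$.
\item For $2\star 22$: it invokes Proposition 6.1 of \cite{green2025enumerate}. A mirror must bisect spines, hence pass through $Z$, so $Stab_H(z)=D_2$.
\item For $\star 2222$ and $2222$ there is no choice.
\end{itemize}
Your index idea can also be repaired, but only with an honest index computation. The translations in $N$ are exactly those by $L=Z-z_0$, while $\Gamma$ has translation lattice $2L$ and point group $\{\pm 1\}$. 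Hence $[N:\Gamma]=2|P|$, where $P$ is the point group of $N$, giving $24,12,8,8,4$ in the five cases. Comparing with $\chi(S^2/G)=2/|G|$ then excludes the wrong placements; for instance $\star 22$ has $\chi=1/2\neq 1/4$. More directly, $N$ contains all translations by $L$ and preserves $Z=z_0+L$. So every element of $N$ is such a translation times an element fixing $z_0$, and $Stab_N(z_0)\cong P$ is the full point group: $D_6, C_6, D_2, D_2, C_2$.

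\textbf{Two smaller corrections to Step 1.}
\begin{itemize}
\item The half-turn centers of $\Gamma$ are exactly $Z$: translation by $2\ell$ composed with the half-turn about $a$ is the half-turn about $a+\ell$. So the worry about ``other half-turn centers of $\Gamma$ at lattice midpoints'' is moot. Those midpoints are half-turn centers of $N$, not of $\Gamma$, and map to smooth points of $Q$.
\item Your first route, that the cell structure is determined by the metric, is false. Take adjacent hexagon-center vectors $v_1=(0,1)$ and $v_2=(\sqrt{3}/2,1/2)$. The lattice spanned by $2v_1+v_2$ and $4v_1-5v_2$ is rectangular. Its axis reflections preserve $Z$, so they descend to isometries of $Q$ preserving $q(Z)$, but they are not symmetries of the hexagonal grid. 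Only your second route works: restrict to graph-preserving isometries, as the paper's $Isom(Q)=Isom(Q,F,i)$ does.
\end{itemize}
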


The proofs of Propositions~\ref{prop-symmEquivSummary} and \ref{prop-upstairsDownstairsSymm} are given in Section~\ref{sec-symmProof}.

\section{Symmetry and signatures}
\label{sec-sigAndSymm}

This section establishes a correspondence between the signatures of $(3, 6)$-fullerene and their symmetry types.

\begin{lemma}
    Suppose a (3,6)-fullerene has signature $(s, b, \dfrac{s-b}{2})$ where $s$ and $b$ are both even or both odd. Suppose also that the (3,6)-fullerene has coinciding signatures. Then the (3,6)-fullerene has symmetry type $\star 332$. 
    \label{lem-star332}
\end{lemma}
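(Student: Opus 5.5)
We will go through Proposition~\ref{prop-upstairsDownstairsSymm}, part 1: it suffices to show that the hexagonal tiling cover, together with the lattice $Z$ of special-hexagon centers, has wallpaper symmetry $\star 632$. That is, we need a $6$-fold rotation and a reflection of the plane that preserve both the hexagonal grid and $Z$. Once these are in hand, the full symmetry group of the pair (tiling, $Z$) contains a group of type $\star 632$. Since $\star 632$ is the largest wallpaper group compatible with a hexagonal tiling, the symmetry type is exactly $\star 632$, and part 1 gives $\star 332$ downstairs.

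\textbf{Step 1: the $6$-fold rotation.} The group $\Gamma$ is generated by $180^\circ$ rotations about $Z$, and the $180^\circ$ rotation about a special hexagon center preserves $Z$. So we only need a $3$-fold (equivalently $60^\circ$) rotation about a special hexagon center $z_0$ preserving $Z$. Coinciding signatures means the three signatures obtained from the plane, the plane rotated by $60^\circ$, and the plane rotated by $120^\circ$ are identical. The signature $(s,b,f)$ determines $Z$ up to translation: taking $z_0$ at the origin, $Z$ is the lattice generated by the vertical vector of $s+1$ hexagon steps and the vector reaching the next spine column with offset $f$. Equality of the signature for the plane and its $120^\circ$ rotation therefore says that rotating by $120^\circ$ about $z_0$ maps $Z$ to itself. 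Combined with the half-turn, this gives a $60^\circ$ rotation, so the cover has at least $632$ symmetry. This matches the statement in \cite{green2025enumerate} that coinciding signatures correspond to $3$-fold symmetry.

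\textbf{Step 2: the reflection.} The offset $f=(s-b)/2$ should make the lattice $Z$ symmetric under a mirror. The natural candidate is reflection across the line through $z_0$ in the direction of the second lattice vector's ``diagonal.'' More concretely, I would set up hexagon coordinates: let $e_1$ be one vertical hexagon step and $e_2$ one SW$\to$NE hexagon step, with $z_0=0$. The lattice generators are then
\[
v_1=(s+1)e_1, \qquad v_2=(b+1)e_2 - f e_1 .
\]
A mirror symmetry of the hexagonal grid through a hexagon center that swaps the directions of $e_2$ and $e_2-e_1$ sends $e_1\mapsto -e_1$ and $e_2\mapsto e_2-e_1$. (Equivalently, one can use a mirror line at $30^\circ$ if that fits the convention better.) Applying this to $v_2$ gives
\[
(b+1)(e_2-e_1)+f e_1 = v_2 - (b+1-2f)e_1 .
\]
The displacement $b+1-2f = s+1-2f+(b-s) \equiv 0$ modulo $s+1$ precisely when $2f\equiv b-s$, up to sign convention, that is, when $f\equiv \pm (s-b)/2$. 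The parity hypothesis ensures $(s-b)/2$ is an integer. Hence the image of $v_2$ lies in $Z$. The image of $v_1$ is $-v_1\in Z$. So the mirror preserves $Z$ and the hexagonal grid, giving a reflection in the symmetry group of the cover.

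\textbf{Conclusion and main obstacle.} With the $60^\circ$ rotation and the reflection, the cover has symmetry containing $\star 632$, hence equal to it, and Proposition~\ref{prop-upstairsDownstairsSymm} yields $\star 332$. The main obstacle is Step 2. Getting the offset convention exactly right (the sign, and whether $f$ is measured below along the SW$\to$NE translation) requires careful bookkeeping to identify which grid reflection works. I would first verify it on the example $(8,2,3)$, where $(s-b)/2=3$, to fix signs before writing the general computation. A secondary point is to justify that the symmetry type is not larger, which follows since $\star 632$ is maximal among wallpaper groups preserving a hexagonal tiling.
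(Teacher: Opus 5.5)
Your plan follows the paper's proof. A $60^\circ$ rotation about a special-hexagon center comes from coinciding signatures, and horizontal/vertical mirrors through special-hexagon centers come from the offset $(s-b)/2$. The classification then applies: any wallpaper group containing a $6$-fold rotation and a reflection is $\star 632$, which is the precise form of your ``maximality'' remark. Finally Proposition~\ref{prop-upstairsDownstairsSymm} gives $\star 332$. Your lattice-vector computation is a more checkable version of the paper's ``$A$ lies halfway between $B$ and the special hexagon below it'' picture. Your Step 1 derives directly what the paper cites from \cite{green2025enumerate}.

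The one thing that must be fixed is the sign in Step 2; your ``$f\equiv\pm(s-b)/2$'' hedge hides it.
\begin{itemize}
\item With the paper's definition (the translated hexagon lands $f$ hexagons \emph{below} a special hexagon), the second generator is $v_2=(b+1)e_2+fe_1$, not $(b+1)e_2-fe_1$.
\item The horizontal reflection through $z_0$ sends this $v_2$ to $(b+1)(e_2-e_1)-fe_1=v_2-(b+1+2f)e_1$.
\item The only multiples of $e_1$ in $Z$ are multiples of $(s+1)e_1$, because a nonzero $v_2$-coefficient leaves a nonzero $e_2$-component. So the image lies in $Z$ iff $2f\equiv -(b+1)\equiv s-b \pmod{s+1}$, which is exactly your hypothesis.
\item The vertical reflection through $z_0$ gives the same condition.
\item With your sign, the condition becomes $2f\equiv b-s \pmod{s+1}$, which $f=(s-b)/2$ does not satisfy in general. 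For $(8,2,3)$ it would require $6\equiv 3 \pmod 9$. So Step 2 as written does not go through, and the ``that is'' step is not an equivalence.
\end{itemize}
With the corrected generator, the argument is complete.
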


\begin{figure}[ht]
\centering
\includegraphics[height = 8 cm]{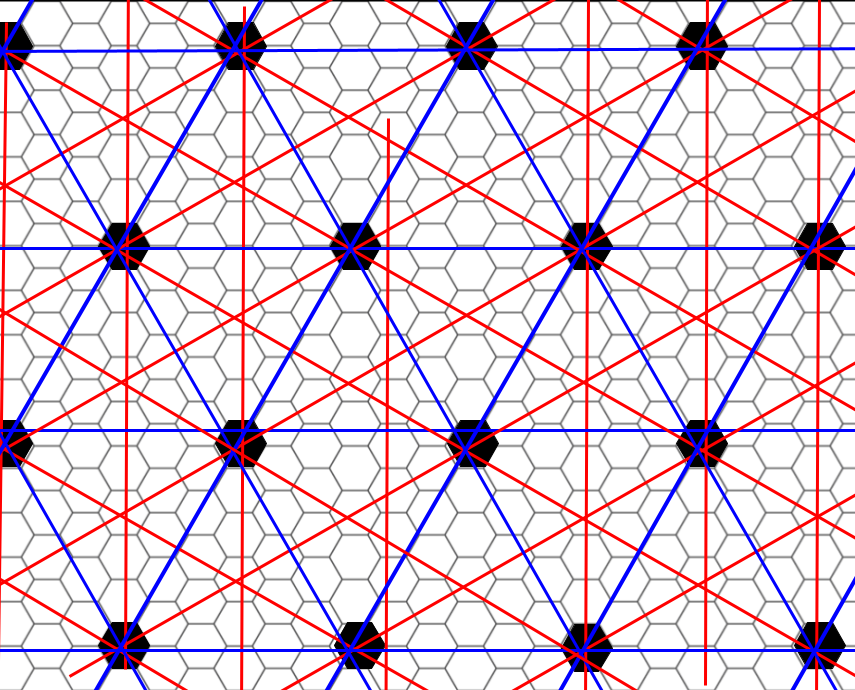}
    \caption {The $(3,6)$-fullerene with signature $(8, 2, 3)$ has $\star 632$ symmetry upstairs and $\star 332$ symmetry downstairs. Mirror lines are drawn.}
\label{fig-823}
\end{figure}

\begin{proof}
The offset of $\dfrac{s-b}{2} $ means that if a special hexagon, which we will call hexagon $A$, is shifted in the approximately SW to NE direction through $b+1$ columns of hexagons, it ends up $\dfrac{s-b}{2} \pmod{s+1}$ hexagons below a special hexagon. Call this second special hexagon $B$. Note that every column that hexagon $A$ is  shifted over in the SW to NE direction puts it a height of half a hexagon above its original horizontal position. Therefore, if hexagon $A$ ends up is $\dfrac{s-b}{2} \pmod{s+1}$ hexagons below a hexagon $B$ after shifting $b+1$ columns, it must have been at a height of $\dfrac{s-b}{2} + \dfrac{b+1}{2} \pmod{s+1}$ hexagons below hexagon $B$ to begin with. The number $\dfrac{s-b}{2} + \dfrac{b+1}{2}$ is equal to $\dfrac{s+1}{2}$, which means that hexagon $A$ lies precisely on the horizontal line halfway in between hexagon $B$ and the special hexagon below it. Similarly, every other special hexagon in a spine column lies on a horizontal line halfway between two special hexagons in the next consecutive spine column. It follows that the special hexagons are laid out on a grid as in Figures~\ref{fig-823} and \ref{fig-421}, so that each horizontal and vertical line that bisects special hexagons is a mirror line of the hexagonal grid containing special hexagons.

But by assumption, the (3,6)-fullerene has coinciding signatures, so by Proposition~3.2 of \cite{green2025enumerate}, in the hexagonal cover, rotation by $60^\circ$ around the center of any special hexagon takes special hexagons to special hexagons, and is therefore a symmetry of the hexagonal tiling cover. The hexagonal cover therefore has 6 mirror lines through the center of each special hexagon, and by the classification of wallpaper patterns \cite{conway2016symmetries}, it must have symmetry type $\star 632$, as in Figure~\ref{fig-823}. 

Therefore, by Proposition~\ref{prop-upstairsDownstairsSymm} the (3,6)-fullerene has symmetry type $\star 332$. 
\end{proof}

\begin{lemma}
    Suppose a (3,6)-fullerene has signature $(s, b, \dfrac{s-b}{2})$ where $s$ and $b$ are both even or both odd. Suppose also that the (3,6)-fullerene does not have coinciding signatures. Then the (3,6)-fullerene has symmetry type $2 \star 2$. 
    \label{lem-2star2}
\end{lemma}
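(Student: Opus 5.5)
The first step is the same geometric argument as in the proof of Lemma~\ref{lem-star332}. Because the offset is $\frac{s-b}{2}$, each special hexagon in a spine column sits exactly on the horizontal line halfway between two consecutive special hexagons of the next spine column. Hence the special hexagons form a centered rectangular (rhombic) arrangement. Every vertical line through a spine column is a mirror line of the hexagonal tiling that preserves the special hexagons. So is every horizontal line through the centers of special hexagons: it passes through hexagon centers, since the special hexagons in adjacent spine columns are offset by half a spine period, namely $\frac{s+1}{2}$ hexagons. These reflections generate a reflection group of type $\star 2222$ on the lattice generated by the special-hexagon centers.

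Next I look for additional symmetry. The $180^\circ$ rotations about centers of special hexagons are symmetries. Composing a vertical mirror with a horizontal mirror gives $180^\circ$ rotations about their intersection points. The special hexagon $A$ of one spine column lies halfway between $B$ and the special hexagon below $B$ in the next column. Consider the horizontal line through $A$ and the vertical line through $B$'s column. Their intersection is a point on $B$'s column midway between two special hexagons, and it is a center of $2$-fold rotation that does not lie at the center of a special hexagon. Taking glide reflections into account, the group preserving the tiling and the special centers therefore contains both mirror families together with extra $2$-fold centers off the mirrors. Equivalently, it contains the rotation centers at the centers of the rhombic cells. By the classification of wallpaper groups \cite{conway2016symmetries}, the candidates are $\star 2222$, $2\star 22$, $\star 632$, or supergroups such as $\star 442$.

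The key step is to rule out everything larger than $2\star 22$. A $\star 442$ group is impossible, since $90^\circ$ rotations do not preserve the hexagonal tiling. The only wallpaper groups containing this configuration and compatible with the hexagonal tiling are therefore $2\star 22$ or something with $3$- or $6$-fold rotation, namely $\star 632$ or $632$. By Proposition~3.2 of \cite{green2025enumerate}, any $60^\circ$ (equivalently $120^\circ$) rotation preserving the special hexagons forces coinciding signatures, which we have excluded. The group is thus $2\star 22$. I should double-check that the mirror configuration really is the centered one, i.e.\ that the horizontal mirrors through $A$ and through $B$ are distinct lines. That is exactly the half-period shift, and it distinguishes $2\star 22$ from $\star 2222$. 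Proposition~\ref{prop-upstairsDownstairsSymm} then gives symmetry type $2\star 2$ downstairs.

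The main obstacle is the identification of the wallpaper group as exactly $2\star 22$ rather than $\star 2222$. This requires care about which lines are mirrors of the whole pattern: in $2\star 22$ the mirrors are in two perpendicular directions with $2$-fold centers not on the mirrors. I would verify it by exhibiting the rotation center at the midpoint between $A$ and $B$, which does not lie on any mirror, and checking that the set of mirrors is exactly the lines through the special centers. The mirror-line spacing equals the half-period of the centered lattice, so two intersecting mirror lines meet at a special center or at a spine midpoint. The rotations at these points give the corner points "$22$" of $2\star 22$, and the off-mirror rotation gives the cone point "$2$".
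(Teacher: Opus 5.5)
Your route is the paper's. You use the half-period offset to get vertical mirrors along spine columns and horizontal mirrors through special centers. You exclude $60^\circ$/$120^\circ$ rotations, and with them any oblique mirror, by the absence of coinciding signatures, and $90^\circ$ rotations by the hexagons. Then Proposition~\ref{prop-upstairsDownstairsSymm} pushes $2\star 22$ down to $2\star 2$.

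The gap is in your ``key step.'' Excluding rotations of order $3$, $4$, $6$ pins the point group to $D_2$. It does not exclude a larger group with more translations and more parallel mirrors, i.e.\ a $\star 2222$ group. Concretely, suppose $s$ and $b$ are odd. Then the vertical lines through the middle belt columns, and the horizontal lines at an odd multiple of $\frac{s+1}{4}$ hexagons from a special center, are mirror lines of the hexagonal tiling. Together with your mirrors they generate a $\star 2222$ group that preserves the tiling and contains your $2\star 22$ group, and in it the horizontal mirrors through $A$ and $B$ are still distinct. So your claim that the only wallpaper groups containing your configuration and compatible with the tiling are $2\star 22$ or groups with $3$- or $6$-fold rotation is false. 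The half-period shift alone does not separate $2\star 22$ from $\star 2222$. Your last paragraph names the check that is really needed---that the mirrors are exactly the lines you listed---but never carries it out.

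The missing ingredient is that symmetries must preserve the special centers $Z$, not just the tiling.
\begin{itemize}
\item A vertical mirror must permute the spine columns, so it lies on a spine column or midway between two consecutive ones. In the latter case it swaps two spine columns whose special hexagons are offset by $\frac{s+1}{2}$, so it does not preserve $Z$.
\item A horizontal mirror must preserve the special hexagons of each spine column. So it passes through a special center or a spine midpoint, and all such lines are already on your list.
\end{itemize}
Equivalently, every translation in the group preserves $Z$, so the translation subgroup is exactly the lattice of special centers. With point group $D_2$, the full group then has index $4$ over that lattice, as your $2\star 22$ group does, so the two are equal. This is exactly the part of the paper's proof that rules out additional vertical and horizontal mirrors. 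The paper then reads off $2\star 22$ from the two inequivalent kinds of mirror corners: special centers and spine midpoints.

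A smaller slip: the off-mirror $2$-fold centers are the midpoints of $A$ and $B$, as you say at the end. These are the centers of the rectangles cut out by the mirrors. They are not the centers of the rhombic cells of special centers, which are spine midpoints lying on two mirrors.
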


\begin{proof}
    As in the proof of Lemma~\ref{lem-star332}, the hexagonal tiling cover must have horizontal mirror lines and vertical mirror lines through each special hexagon. There are no mirror lines that are at an angle and not horizontal or vertical, by the following argument. If there were any mirror lines at an angle, they would have to be at at $60^\circ$ angle from horizontal or vertical, in order to take hexagons to hexagons. The composition of a reflection through a mirror line at a $60^\circ$ angle to horizontal (or vertical) with the horizontal mirror (or vertical mirror) is a rotation by $120^\circ$, so there would be a $120^\circ$ rotation that takes special hexagons to special hexagons. By \cite{green2025enumerate}, this would imply that the (3,6)-fullerene has 3-fold rotational symmetry, a contradiction to assumptions. Therefore, the only mirror lines are horizontal and vertical. 

\begin{figure}[ht]
      \centering
\includegraphics[height = 8 cm]{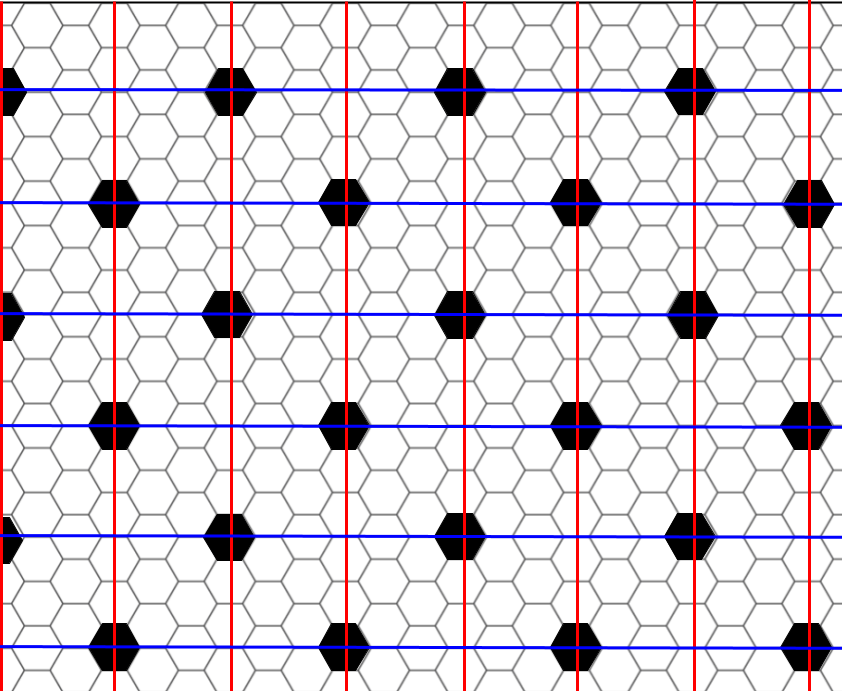}
    \caption {The $(3,6)$-fullerene with signature $(4, 2, 1)$ has $2\star 22$ symmetry upstairs and $2\star 2$ symmetry downstairs. Mirror lines are drawn.}
    \label{fig-421}
\end{figure}

    As in the proof of Lemma~\ref{lem-star332}, the special hexagons in two consecutive spine columns are not at the same height: the special hexagons in one vertical column lie precisely on the horizontal line halfway in between special hexagons in the next spine column. So there are no additional vertical mirrors besides the ones that bisect spines. For every spine column, any horizontal mirrors must either go through a special hexagon in that column or bisect a spine in that column. So there can be no additional horizontal mirrors besides the ones in Figure~\ref{fig-421}. Therefore, there are exactly two distinct types of points with intersecting mirrors that are not related by an isometry that takes special hexagons to special hexagons: one in the center of special hexagons and one halfway in between special hexagons. By the classification of wallpaper patterns \cite{conway2016symmetries}, the symmetry type of the hexagons' tiling must be $2\star22$, as in Figure~\ref{fig-421}. 
    Therefore, by Proposition~\ref{prop-upstairsDownstairsSymm},
    the symmetry type of the (3,6)-fullerene is $2\star 2$
\end{proof}

\begin{lemma}
    Suppose a (3,6)-fullerene has signature $(s, b, \dfrac{-(b+1)}{2})$ where $b$ is odd. Then the (3,6)-fullerene has symmetry type $\star 222$. 
    \label{lem-star222}
\end{lemma}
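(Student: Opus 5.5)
The plan mirrors the proofs of Lemmas~\ref{lem-star332} and \ref{lem-2star2}: show that the hexagonal tiling cover has $\star 2222$ wallpaper symmetry, then apply Proposition~\ref{prop-upstairsDownstairsSymm}.

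First I locate the special hexagons. Take a special hexagon $A$ and shift it SW to NE through $b+1$ columns. It lands $-\frac{b+1}{2} \pmod{s+1}$ hexagons below a special hexagon $B$, that is, $\frac{b+1}{2}$ hexagons above some special hexagon. Each column crossed raises the hexagon by half a hexagon, so the total rise is $\frac{b+1}{2}$, and this is an integer because $b$ is odd. Hence $A$ was at exactly the same height as a special hexagon in the next spine column. Iterating, the special hexagons form a rectangular lattice: every spine column has special hexagons at the same heights, spaced $s+1$ apart vertically, and consecutive spine columns are $b+1$ columns apart horizontally. This rectangular arrangement is the main contrast with the staggered layout of Lemma~\ref{lem-2star2}.

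Next I identify the mirrors. Two families of vertical lines are mirrors of the hexagonal grid that preserve the set of special hexagons: the lines through spine columns, and the vertical lines midway between consecutive spine columns. Since $b$ is odd, $b+1$ is even, so the midway line passes through the center of a belt column and is a genuine mirror of the hexagonal tiling rather than a line along edges. Two families of horizontal lines are also mirrors: the lines through centers of special hexagons, and the lines bisecting spines. A horizontal line through a hexagon center in one column passes through edges in the adjacent columns, which I need to check is still a mirror of the tiling. It is, because horizontal reflections of a column-oriented hexagonal grid through hexagon-center heights or edge-midpoint heights preserve the tiling. The rectangular lattice makes these reflections preserve the special hexagons.

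The mirrors therefore meet at right angles at four inequivalent kinds of points:
\begin{itemize}
\item centers of special hexagons;
\item spine midpoints;
\item points midway between spine columns at special-hexagon height;
\item points midway between spine columns at spine-midpoint height.
\end{itemize}
These are exactly the four corner types of $\star 2222$.

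The remaining step is to exclude a larger group, namely $\star 632$ or a $\star 442$-like structure. Square symmetry is impossible on a hexagonal grid. The case I expect to be the main obstacle is the $\star 632$ possibility, where coinciding signatures might add $60^\circ$ rotations. I would first argue that any mirror at $60^\circ$ to the vertical, composed with a vertical mirror, gives a $120^\circ$ rotation preserving special hexagons. By \cite{green2025enumerate} this gives coinciding signatures. I would then show that coinciding signatures are incompatible with the offset $-\frac{b+1}{2}$, perhaps using equations~(\ref{eqn-s2})--(\ref{eqn-f3}). Failing that, the plan is to check directly that a rectangular lattice of special hexagons with aligned rows can never be invariant under $60^\circ$ rotation: rotating the vertical translation vector $(0,s+1)$ by $60^\circ$ would have to give a lattice vector, which forces a specific relation between $s$ and $b$ that cannot be realized by the rectangular lattice. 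With larger groups excluded, the classification in \cite{conway2016symmetries} gives $\star 2222$ upstairs, and Proposition~\ref{prop-upstairsDownstairsSymm} gives $\star 222$ downstairs.
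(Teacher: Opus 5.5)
Your proposal is correct and follows the paper's proof. The offset computation puts special hexagons at equal heights in consecutive spine columns, and the horizontal and vertical mirror families meet in four inequivalent corner types. So the cover is $\star 2222$, and Proposition~\ref{prop-upstairsDownstairsSymm} finishes the argument. Your check that $b+1$ being even makes the midway vertical line a genuine mirror is a detail the paper leaves implicit.

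The paper never explicitly rules out a larger group, but your fallback does so immediately, and no relation between $s$ and $b$ is needed. Every symmetry of the cover preserves $Z$, whose translation lattice is rectangular with unequal sides $s+1$ and $(b+1)\frac{\sqrt{3}}{2}$. Hence the cover's symmetry group has point group at most $D_2$; for instance, the $60^\circ$ rotation of $(0,s+1)$ has vertical component $\frac{s+1}{2}\notin(s+1)\mathbb{Z}$. Its translations also lie inside that lattice. This forces the group to equal the $\star 2222$ group you found, which justifies that the four corner types are inequivalent and excludes a $2\star 22$ overgroup as well as $\star 632$.
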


\begin{proof}
    The offset of $\dfrac{-(b+1)}{2} $ means that if a special hexagon, which we will call hexagon $A$, is shifted in the approximately SW to NE direction through $b+1$ columns of hexagons, it ends up $\dfrac{-(b+1)}{2} \pmod{s+1}$ hexagons below a special hexagon. Call this second special hexagon $B$. Note that every column that hexagon $A$ is  shifted over in the SW to NE direction puts it a height of half a hexagon above its original horizontal position. Therefore, if hexagon $A$ ends up $\dfrac{-(b+1)}{2} \pmod{s+1}$ hexagons below a hexagon $B$ after shifting $b+1$ columns, it must have been at a height of $\dfrac{-(b+1)}{2} + \dfrac{b+1}{2} \pmod{s+1} \equiv 0 \pmod{s+1}$ hexagons below hexagon $B$ to begin with. Therefore, the hexagon $A$ lies precisely at the same height as a special hexagon in the column of hexagons containing $B$.

    So all special hexagons are laid out in a grid at equal heights as the special hexagons in neighboring columns of special hexagons. There are horizontal mirror lines going through the rows of special hexagons and halfway in between the rows of hexagons. There are vertical mirror lines going through columns of special hexagons and halfway between columns of special hexagons. There are exactly four distinct points at the intersection of pairs of mirrors that are not equivalent to each other by an isometry of the plane that preserves special hexagons. See Figure~\ref{fig-433}. So by the classification of wallpaper patterns \cite{conway2016symmetries}, the symmetry type of the hexagonal tiling is $\star 2222$. Therefore, by Proposition~\ref{prop-upstairsDownstairsSymm} the (3,6)-fullerene has symmetry type $\star 222$.
\end{proof}

\begin{figure}[ht]
\centering
\includegraphics[height = 8 cm]{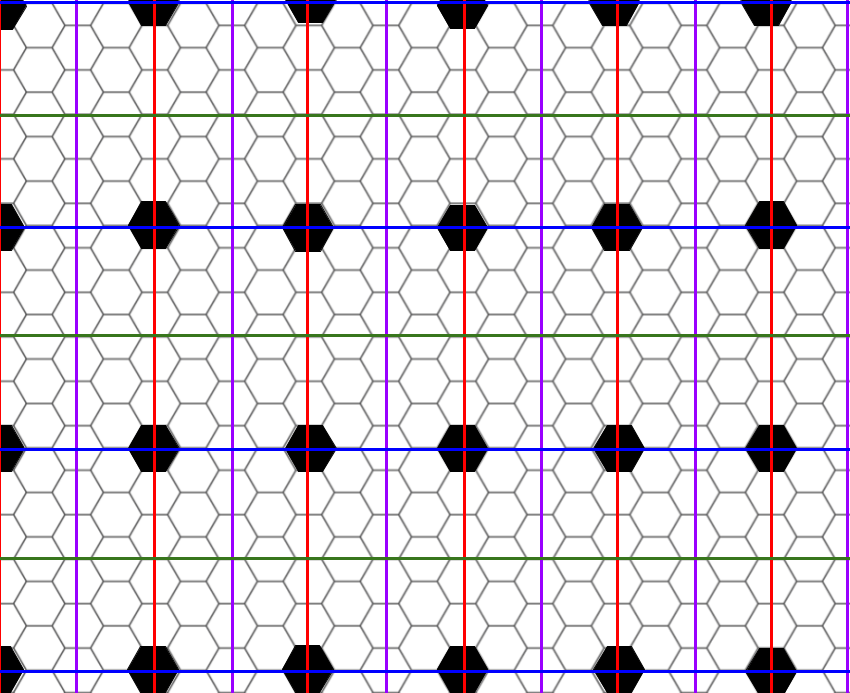}
    \caption {The $(3,6)$-fullerene with signature (4, 3, 3) has $\star 2222$ symmetry upstairs and $\star 222$ symmetry downstairs. Mirror lines are drawn.} 
    \label{fig-433}
    \end{figure}

As in \cite{green2025enumerate}, we say that a signature $(s, b, f )$ is \emph{self-mirror} if the signature $(s, b, s-b-f \pmod{s+1})$ is the exact same signature as $(s, b, f \pmod{s+1})$, not just equivalent. Self-mirror signatures are closely related to mirror symmetry: Corollary~6.3 of \cite{green2025enumerate} says that every $(3,6)$-fullerene with mirror symmetry has a unique self-mirror signature and each self-mirror signature represents a $(3,6)$-fullerene with mirror symmetry.

\begin{lemma}
    A signature $(s, b, f)$ is self-mirror if and only if either: 
    \begin{enumerate}
        \item $s$ and $b$ are both even and $f \equiv \dfrac{s-b}{2} \pmod{s+1}$
        \item $s$ and $b$ are both odd and $f \equiv \dfrac{s-b}{2} \pmod{s+1}$
        \item $s$ and $b$ are both odd and $f \equiv -\dfrac{b+1}{2} \pmod{s+1}$
        \item $s$ is even and $b$ is odd and $f \equiv -\dfrac{b+1}{2} \pmod{s+1}$
    \end{enumerate}
    %$s$ and $b$ have the same parity and $f \equiv -\dfrac{s-b}{2} \pmod{s+1}$ or $b$ is odd and $f = -\dfrac{b+1}{2}$. 
    \label{lem-selfmirroralgebra}
\end{lemma}

\begin{proof}
Consider a (3, 6)-fullerene with signature $(s, b, f)$. If Case 1 or Case 2 hold, i.e. $s$ and $b$ have the same parity and $f \equiv \dfrac{s-b}{2} \pmod{s+1}$, then $2f \equiv s- b \pmod{s+1}$, so $f \equiv s - b - f \pmod{s+1}$. Therefore, the signature $(s, b, f)$ is self-mirror.
If Case 3 or Case 4 hold, i.e. $b$ is odd and $f \equiv -\dfrac{b+1}{2}$, then $2f \equiv -b - 1 \equiv s - b \pmod{s+1}$,  so $f \equiv s- b- f\pmod{s+1}$, so $(s, b, f)$ is self-mirror.

Conversely, suppose that the signature $(s, b, f)$ is self-mirror. Then $f \equiv s- b- f\pmod{s+1}$, so $2f \equiv s - b \pmod{s+1}$. If $s-b$ is odd and $s+1$ is even, then there are no values of $f$ that satisfy this equation  Therefore, either $s$ and $b$ have the same parity, or else $s$ is even. We can divide these possibilities into three cases: A) $s$ and $b$ are both odd, B) $s$ and $b$ are both even, and C) $s$ is even and $b$ is odd (since $s$ even and $b$ even is already covered by case B).

Consider the case where $s$ and $b$ are both even. Since $\gcd(2, s+1) = 1$ $2f \equiv s- b \pmod{s+1}$ has a unique solution for $f$ $\pmod{s+1}$.  So $f$ must be $\dfrac{s-b}{2} \pmod{s+1}$, and conclusion 1 of the lemma holds. 

Next, consider the case where $s$ and $b$  are both are odd. Since $\gcd(2, s+1) = 2$ and $2 \mid (s-b)$, the equation $2f \equiv s- b \pmod{s+1}$ has two solutions for $f$, which must be $\dfrac{s-b}{2} \pmod{s+1}$ and $-\dfrac{b+1}{2} \pmod{s+1}$. Note that these two solutions are distinct, since if $\dfrac{s - b}{2}\equiv -\dfrac{b+1}{2} \pmod{s+1}$ then $\dfrac{s+1}{2} \equiv 0 \pmod{s+1}$, which is impossible. Conclusions 2 and 3 of the lemma hold in this case.

Finally, consider the case where $s$ is even and $b$ is odd. Since $\gcd(2, s+1) = 1$, the equation $2f \equiv s- b \pmod{s+1}$ has one solution for $f$, which must be $f \equiv -\dfrac{b+1}{2} \pmod{s+1}$, and conclusion 4 of the lemma holds.

\end{proof}

\begin{proposition}
A (3,6)-fullerene has mirror symmetry if and only if it has one of the following spherical symmetry types in orbifold notation: $\star 332$, $2\star 2$, or $\star 222$. These are the symmetry types $T_d$, $D_{2d}$, and $D_{2h}$, respectively, in Sch\"onflies notation. 
\label{prop-mirrorSymmetryClassify}

Furthermore,
\begin{enumerate}
\item[(a)] A (3,6)-fullerene has $\star 332$ symmetry if and only if the (3,6)-fullerene has coinciding signatures and for its signature $(s, b, f)$, $s$ and $b$ are either both even or both odd, and $f \equiv \dfrac{s-b}{2} \pmod{s+1}$. 
\item[(b)] A (3,6)-fullerene has $2\star 2$ symmetry if and only if it does not have coinciding signatures and it has a signature
$(s, b, f)$ such that $s$ and $b$ are either both even or both odd, and $f \equiv \dfrac{s-b}{2} \pmod{s+1}$. 
\item[(c)] A (3,6)-fullerene has $\star 222$ symmetry if and only if it has a signature $(s, b, f)$ such that $b$ is odd and and $f \equiv \dfrac{-(b+1)}{2} \pmod{s+1}$.

\end{enumerate}

It is not possible to have a (3,6)-fullerene with mirror symmetry whose signatures all have $s$ odd and $b$ even.

\end{proposition}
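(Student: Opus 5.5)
The plan is to combine Corollary~6.3 of \cite{green2025enumerate}, Lemma~\ref{lem-selfmirroralgebra}, and Lemmas~\ref{lem-star332}, \ref{lem-2star2}, \ref{lem-star222}. The five possible symmetry types are $\star 332$, $332$, $2\star 2$, $\star 222$, $222$. Of these, $332$ and $222$ contain only orientation-preserving isometries, while the other three contain reflections. So "mirror symmetry" should be read as the symmetry group containing a reflection, and we pass between the group of a maximally symmetric embedding and $Isom(Q)$ by Proposition~\ref{prop-symmEquivSummary}.

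First, the forward direction of the main statement. Suppose the fullerene has mirror symmetry. By Corollary~6.3 of \cite{green2025enumerate} it has a unique self-mirror signature $(s,b,f)$. By Lemma~\ref{lem-selfmirroralgebra}, this signature falls into one of four cases.
\begin{enumerate}
\item In Cases 1 and 2, we have $f\equiv \frac{s-b}{2}$ with $s,b$ of the same parity. Lemma~\ref{lem-star332} or Lemma~\ref{lem-2star2} then gives type $\star 332$ or $2\star 2$, according to whether the signatures coincide.
\item In Cases 3 and 4, we have $b$ odd and $f\equiv -\frac{b+1}{2}$, and Lemma~\ref{lem-star222} gives type $\star 222$.
\end{enumerate}
Conversely, each of the types $\star 332$, $2\star 2$, $\star 222$ contains reflections, so a fullerene of one of these types has mirror symmetry. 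The Schönflies names are just the standard dictionary.

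Second, the "if and only if" statements (a)--(c). The "if" directions are exactly Lemmas~\ref{lem-star332}, \ref{lem-2star2}, \ref{lem-star222}. For the "only if" directions, suppose the fullerene has type $\star 332$. It then has mirror symmetry, so its self-mirror signature lies in one of the four cases above. It cannot lie in Cases 3 or 4, since Lemma~\ref{lem-star222} would then force type $\star 222$, and symmetry type is well defined by Proposition~\ref{prop-symmEquivSummary}. So it lies in Case 1 or 2. If the signatures did not coincide, Lemma~\ref{lem-2star2} would give type $2\star 2$, again a contradiction. Hence the signatures coincide, which proves (a). The same elimination proves (b) and (c). For (c), a signature in Case 1 or 2 would force type $\star 332$ or $2\star 2$, so the self-mirror signature must have $b$ odd and $f\equiv-\frac{b+1}{2}$.

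Finally, the last sentence. If the fullerene has mirror symmetry, its self-mirror signature is itself one of its signatures. By Lemma~\ref{lem-selfmirroralgebra}, that signature never has $s$ odd and $b$ even, so not all signatures can have that parity pattern.

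The main obstacle is rigor about well-definedness. The "only if" arguments depend on the symmetry type being a single well-defined invariant, so that the lemmas' conclusions are mutually exclusive. This relies on Proposition~\ref{prop-symmEquivSummary}, and on the three groups $\star332$, $2\star2$, $\star222$ being pairwise non-isomorphic; the latter is clear from their orders, $24$, $8$, $8$, together with their different structure. One also has to check that the notion of "mirror symmetry" in \cite{green2025enumerate} agrees with the spherical group containing a reflection. I would address this through Proposition~\ref{prop-upstairsDownstairsSymm}: a reflection of $Q$ lifts to a glide or mirror of the cover preserving special hexagons, and conversely.
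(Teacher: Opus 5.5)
Your proposal is correct and follows the paper's proof essentially step for step. Both arguments obtain a self-mirror signature from Corollary~6.3 of \cite{green2025enumerate}, split into the four cases of Lemma~\ref{lem-selfmirroralgebra}, use Lemmas~\ref{lem-star332}, \ref{lem-2star2}, \ref{lem-star222} for the ``if'' directions, and eliminate cases for the ``only if'' directions. The one small variation is in (a) and (b): you get (non-)coincidence of signatures by ruling out cases with Lemmas~\ref{lem-star332} and \ref{lem-2star2}, whereas the paper cites Proposition~3.2 of \cite{green2025enumerate} (3-fold rotational symmetry if and only if coinciding signatures); both versions rely on the symmetry type being a well-defined invariant, which you rightly flag.
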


\begin{proof}
Suppose that a (3, 6)-fullerene has mirror symmetry. By Corollary~6.3 of \cite{green2025enumerate}, if has a self-mirror signature $(s, b, f)$. Lemma~\ref{lem-selfmirroralgebra} gives four options. 

Case 1: $s$ is even and $b$ is even and $f \equiv \dfrac{s-b}{2} \pmod{s+1}$.  Then by Lemmas~\ref{lem-star332} and \ref{lem-2star2}, the (3,6)-fullerene has symmetry type $\star 332$ if its signatures coincide and $2\star2$ otherwise.

Case 2: $s$ is odd and $b$ is odd and $f \equiv \dfrac{s-b}{2} \pmod{s+1}$.
By Lemmas~\ref{lem-star332} and \ref{lem-2star2}, the (3,6)-fullerene has symmetry type $\star 332$ if its signatures coincide and $2\star2$ otherwise. 

Case 3: $s$ is odd and $b$ is odd and $f \equiv -\dfrac{b+1}{2} \pmod{s+1}$. By Lemma~\ref{lem-star222}, the (3,6)-fullerene has symmetry type $\star 222$.

Case 4: $s$ is even and $b$ is odd and $f \equiv -\dfrac{b+1}{2} \pmod{s+1}$. By Lemma~\ref{lem-star222}, the (3,6)-fullerene has symmetry type $\star 222$.

This analysis shows that any (3,6)-fullerene with mirror symmetry must have symmetry type $\star 332$ or $2\star2$ or  $\star 222$. 

Conversely, any (3,6)-fullerene with symmetry type $\star 332$ or $2\star2$  or $\star 222$ must have mirror symmetry, since mirror symmetry is a feature of each of these symmetry types. This proves the first statement of the proposition. 

Now consider statements (a), (b), and (c). The ``if'' directions of these statements follow directly from Lemmas~\ref{lem-star332}, \ref{lem-2star2}, and \ref{lem-star222}. For the ``only if'' directions, note that any (3, 6)-fullerene with $\star 332$, $2\star2$, or $\star 222$ symmetry must have mirror symmetry, and therefore by Corollary~6.3 of \cite{green2025enumerate}, it must have a signature that is self-mirror. So it must fall into one of the four cases of  Lemma~\ref{lem-selfmirroralgebra}. If it has $\star 222$ symmetry,  one of the last two cases of Lemma~\ref{lem-selfmirroralgebra} must apply, since otherwise, the ``if'' direction of the current lemma that was already established would force a different type of symmetry. This proves the ``only if" direction of part (c). Similarly, if the $(3,6)$-fullerene has $2\star 2$ symmetry, one of the first two cases of Lemma~\ref{lem-selfmirroralgebra} must apply, since otherwise the ``if'' direction of the current lemma would force a different type of symmetry.  Since $\star 332$ symmetry implies 3-fold rotational symmetry and therefore coinciding signatures by Proposition~3.2 of \cite{green2025enumerate}, and $2 \star 2$ implies no 3-fold rotational symmetry and therefore no coinciding signatures, the ``only if'' directions of parts (a) and (b) follow. 

The last statement of the proposition follows from the first statement and the fact that none of cases (a), (b), and (c) allow $s$ to be odd and $b$ to be even in all three signatures.

\end{proof}

Following \cite{deza2005zigzag}, we call a $(3, 6)$ fullerene \emph{tight} if it has no belts. This means that the second coordinate $b$ is zero in each of its three alternative signatures. 

\begin{corollary}
    If $V$ is a multiple of 8, then there are no tight (3,6)-fullerenes with mirror symmetry with $V$ vertices. 
\end{corollary}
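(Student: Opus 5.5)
Suppose, for contradiction, that a tight $(3,6)$-fullerene with mirror symmetry has $V$ vertices, where $8 \mid V$. Tightness means that every one of its three signatures has $b=0$. The plan is to fix any signature $(s,0,f)$, derive a parity constraint on $s$ from the vertex count, and then show that this parity is forbidden by Proposition~\ref{prop-mirrorSymmetryClassify}.

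First I use equation~\eqref{eqn-vCount} with $b=0$, which gives $V = 4(s+1)$. Since $8 \mid V$, it follows that $s+1$ is even, so $s$ is odd. This holds for each of the three signatures, because each has $b=0$ and gives the same $V$. So every signature of the fullerene has $s$ odd and $b=0$ even.

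Next I invoke the last statement of Proposition~\ref{prop-mirrorSymmetryClassify}: no $(3,6)$-fullerene with mirror symmetry can have all of its signatures with $s$ odd and $b$ even. This contradicts the previous step, so no such fullerene exists.

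As a check, the same conclusion can be read off directly from parts (a)--(c). Parts (a) and (b) need a signature in which $s$ and $b$ have the same parity, which is impossible when $s$ is odd and $b=0$. Part (c) needs $b$ odd, which is also impossible. There is no real obstacle here. The only point needing care is that tightness must force $b=0$ in \emph{all three} signatures, not just one. This is exactly the definition of tight, so the hypothesis of the final statement of Proposition~\ref{prop-mirrorSymmetryClassify} applies.
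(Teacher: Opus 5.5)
Your proposal is correct and matches the paper's proof: tightness forces $b=0$ in all three signatures, so $\frac{V}{4}=s+1$ being even makes $s$ odd in every signature, which contradicts the last statement of Proposition~\ref{prop-mirrorSymmetryClassify}. Your extra check against parts (a)--(c) is valid, though the paper does not include it.
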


\begin{proof}
    Suppose that there is a tight $(3, 6)$-fullerene with mirror symmetry and $V$ vertices, where $V$ is a multiple of 8. Since it has no belts, every signature for this $(3,6)$-fullerene must have second coordinate $b = 0$. Since $V$ is a multiple of 8, $\frac{V}{4}$ is even. Since $\frac{V}{4} = (s+1)(b+1)$, by Equation~\ref{eqn-vCount}, and $b = 0$ in every signature, it follows that $s$ is odd and $b$ is even in every signature. This contradicts Proposition~\ref{prop-mirrorSymmetryClassify}.
\end{proof}

\begin{proposition}
    A (3,6)-fullerene has $332$ symmetry if and only if it has 3-fold rotational symmetry but no mirror symmetry. 
    \label{prop-332}
\end{proposition}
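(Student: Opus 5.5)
The plan is to use the fact, stated at the start of Section~\ref{sec-symmDefs}, that every $(3,6)$-fullerene has one of exactly five symmetry types: $\star 332$, $332$, $2\star 2$, $\star 222$, and $222$. Proposition~\ref{prop-mirrorSymmetryClassify} and the existing link between 3-fold rotational symmetry and coinciding signatures (Proposition~3.2 of \cite{green2025enumerate}) then reduce the statement to bookkeeping about which of these groups contain which kinds of isometries.

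For the forward direction, suppose the fullerene has $332$ symmetry, i.e.\ its maximally symmetric embedding has symmetry group the rotation group $T$ of the tetrahedron. This group contains $120^\circ$ rotations about the axes through vertices of the tetrahedron, so the fullerene has 3-fold rotational symmetry. For mirror symmetry, Proposition~\ref{prop-mirrorSymmetryClassify} says a fullerene has mirror symmetry exactly when its type is $\star 332$, $2\star 2$, or $\star 222$. Since the symmetry type is well defined by Proposition~\ref{prop-symmEquivSummary}, a fullerene of type $332$ is not of any of these three types and therefore has no mirror symmetry.

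For the converse, suppose the fullerene has 3-fold rotational symmetry and no mirror symmetry. By Proposition~\ref{prop-mirrorSymmetryClassify}, lacking mirror symmetry rules out $\star 332$, $2\star 2$, and $\star 222$, leaving $332$ and $222$. The group $222$ ($D_2$) has order 4 and consists of the identity and three $180^\circ$ rotations, so it contains no element of order 3. A 3-fold rotation of the sphere inducing a graph isomorphism of some embedding would have to lie in the symmetry group of a maximally symmetric embedding, so type $222$ is impossible and the type must be $332$.

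The main obstacle is that last step. The definition of ``3-fold rotational symmetry'' refers to \emph{some} embedding admitting a 3-fold rotation, while the symmetry type refers to a maximally symmetric embedding. Maximality only says the group is not a proper subgroup of another embedding's group, not that it contains every other embedding's group. To get around this, I would work upstairs instead. By Proposition~3.2 of \cite{green2025enumerate}, 3-fold rotational symmetry is equivalent to coinciding signatures, which gives a $120^\circ$ (indeed $60^\circ$) rotation of the hexagonal tiling cover preserving the special hexagons. A wallpaper group of type $2222$ contains no rotations of order 3 or 6, so by Proposition~\ref{prop-upstairsDownstairsSymm} and the classification of wallpaper groups, the cover cannot have $2222$ symmetry, and therefore the fullerene cannot have $222$ symmetry.
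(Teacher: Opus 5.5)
There is a gap in your final inference, though it is easy to repair with tools you already cite. You conclude that since the cover ``cannot have $2222$ symmetry,'' the fullerene ``cannot have $222$ symmetry.'' But part (5) of Proposition~\ref{prop-upstairsDownstairsSymm} only says that a $2222$ cover gives $222$ symmetry downstairs. Ruling out a $2222$ cover tells you nothing unless you also have the converse, which you have not established. In the paper, that converse follows only once the whole case analysis, including the present proposition, is done.

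The repair uses your ingredients in the forward direction. The $60^\circ$ rotation about a special hexagon center, supplied by Proposition~3.2 of \cite{green2025enumerate}, is a 6-fold rotation of the cover. The only wallpaper groups with 6-fold rotations are $632$ and $\star 632$. So by parts (1) and (2) of Proposition~\ref{prop-upstairsDownstairsSymm}, the fullerene is $332$ or $\star 332$, and $\star 332$ is excluded because it has mirror symmetry (Proposition~\ref{prop-mirrorSymmetryClassify}).

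Arguing this way makes the five-type list unnecessary, which is just as well. In this paper that list is Proposition~\ref{prop-listSymmTypes}, whose proof uses the present proposition, so relying on it is circular. The introduction's appeal to \cite{deza2005zigzag} does not rescue it either: that result covers only 3-connected graphs, with symmetry measured by graph automorphisms.

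With the fix, your proof is essentially the paper's. The one difference is where mirrors are excluded. The paper uses Proposition~6.1 of \cite{green2025enumerate} to carry ``no mirror symmetry'' upstairs and lands directly on $632$. You instead allow $\star 632$ and discard $\star 332$ downstairs. Your forward direction, via the ``only if'' part of Proposition~\ref{prop-mirrorSymmetryClassify}, is a more careful version of the paper's ``clearly.''

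The obstacle you identified is also not a real one. Lemmas~\ref{lem-IsomInjMCG}, \ref{lem-allMCG}, \ref{lem-QIsoMCG1} and Proposition~\ref{prop-symmEquivSummary} show that the isometry group of any embedding is isomorphic to a subgroup of the symmetry group. So a 3-fold rotation in some embedding does force an element of order 3 in the symmetry group, which excludes $D_2$. That route still needs the five-type list, though, so the upstairs argument is the better one.
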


\begin{proof}
    Suppose a (3,6)-fullerene has $332$ symmetry. Then it clearly has 3-fold rotational symmetry and no mirror symmetry. 
    
    Conversely, suppose a (3,6)-fullerene has 3-fold rotational symmetry and no mirror symmetry.  Since the (3,6)-fullerene has no mirror symmetry, by Proposition~6.1 of \cite{green2025enumerate}, the hexagonal tiling that covers it has no mirror symmetry. Since the (3,6)-fullerene does have 3-fold rotational symmetry, by Proposition 3.2 of  \cite{green2025enumerate}, in the hexagonal tiling that covers it, any rotation by $60^\circ$ around the center of a special hexagon takes special hexagons to special hexagons. Therefore, there is a 6-fold rotation in the hexagonal tiling, but no mirror symmetry. So by the classification of wallpaper patterns \cite{conway2016symmetries}, the hexagonal tiling must have symmetry type $632$. By Proposition~\ref{prop-upstairsDownstairsSymm}, the (3,6)-fullerene has symmetry type $332$. See Figure~\ref{fig-602} for an example.
\end{proof}

\begin{figure} 

\centering 

\includegraphics[height = 8 cm]{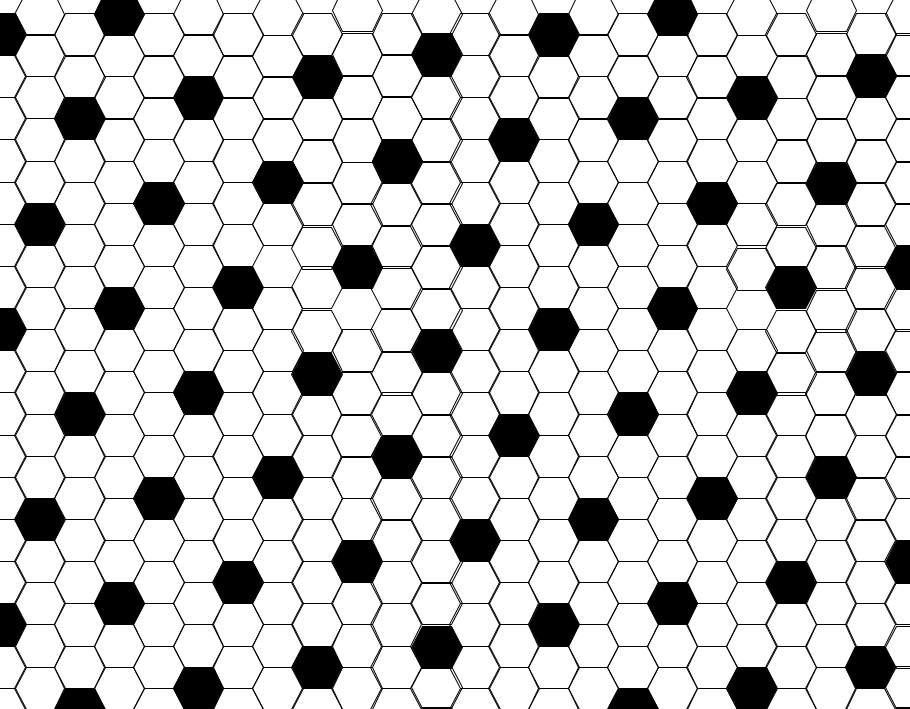}
   \caption {The $(3,6)$-fullerene with signature (6, 0, 2) has 632 symmetry upstairs and 332 symmetry downstairs.}
   \label{fig-602}
   \end{figure}

\begin{lemma}
If the hexagonal cover of a (3,6)-fullerene has glide reflection symmetry, then it also  has mirror lines.
\label{lem-noglides}
\end{lemma}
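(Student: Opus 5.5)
Let $G$ be the group of isometries of the plane preserving the hexagonal tiling and the set $Z$ of special hexagon centers. Suppose $g \in G$ is a glide reflection with axis $\ell$ and translation vector $v$ parallel to $\ell$. The plan is to produce an honest reflection in $G$ by composing $g$ with a suitable element of $\Gamma$ or with a translation in $G$. Two facts about $G$ will be used:
\begin{itemize}
\item[(i)] $G$ contains every $180^\circ$ rotation about a point of $Z$, hence every translation by $2(z-z')$ with $z,z'\in Z$; in fact $Z$ is a lattice coset and $\Gamma$ contains translations by $2L$, where $L$ is the parallelogram lattice.
\item[(ii)] Any isometry preserving the hexagonal tiling has linear part in the dihedral group of order $12$. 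So $\ell$ is parallel either to a line through hexagon centers in one of the three edge-normal directions, or to one of the three edge directions.
\end{itemize}

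The first step is to show that $g^2$, which is translation by $2v$, lies in the translation lattice $T$ of $G$. It is easy to check that $T$ contains $2L$ and is contained in $L$. The key step is to decompose $v$ along the axis direction. Let $u$ be the primitive vector of $T$ in the direction of $\ell$, if $T$ has a vector in that direction. Since $g^2 \in T$, we get $2v \in \mathbb{Z}u$, so $v$ is either in $\mathbb{Z}u$ or in $\tfrac12 u + \mathbb{Z}u$.
\begin{itemize}
\item If $v \in \mathbb{Z}u$, compose $g$ with translation by $-v$. This gives a genuine reflection about $\ell$ lying in $G$, and we are done.
\item If $v \in \tfrac12 u+\mathbb{Z}u$, we need a different element. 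The plan is to use the reflection's linear part $\sigma$ acting on $T$. We have $g(z)=\sigma z + w$, and $g$ maps $Z$ to $Z$. Since $Z$ is invariant under the point reflections $x\mapsto 2z_0-x$, the composition of $g$ with the half-turn about a point $z_0\in Z$ is an isometry in $G$ with linear part $-\sigma$. That linear part is the reflection in the line perpendicular to $\ell$. So we obtain a glide or reflection in $G$ perpendicular to $\ell$.
\end{itemize}

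Now combine the two families. The glides parallel to $\ell$ and the glides perpendicular to $\ell$ commute with the rotations in $\Gamma$ up to translation. Because $G$ contains $180^\circ$ rotations, the wallpaper group $G$ contains both $2$-fold rotations and a glide. The classification of wallpaper groups \cite{conway2016symmetries} then restricts $G$ to the types $22\times$ and $22\star$ (the pmg/pgg family) or ones already containing mirrors.

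The main obstacle is therefore to rule out $22\times$ (pgg) and to handle $22\star$, which already has mirrors. For pgg, the point group is generated by two perpendicular reflection linear parts, and all rotation centers are of only two inequivalent kinds. But $\Gamma$ alone already supplies four inequivalent classes of half-turn centers modulo $2L$: the points of $Z$ and the three types of midpoints. Concretely, I would show that the glide $g$ must map $Z$ to itself while fixing the direction set of the parallelogram grid. This forces $\sigma$ to preserve $L$, and hence forces the parallelogram lattice to be rectangular or rhombic with respect to $\ell$. I then expect a direct computation, using the offset description in coordinates like those in the proofs of Lemmas~\ref{lem-star332} and \ref{lem-star222}, to show that the half-period shift of the glide can be absorbed by a translation in $L\cap T$ along $\ell$. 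This is where I expect the real work. If it fails, I would fall back on showing $v\in \mathbb{Z}u$ directly, using the fact that the vertical glide lines must pass through or halfway between spine columns.
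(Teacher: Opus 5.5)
There is a genuine gap: the proposal never disposes of the one case that matters, and the method you sketch for it cannot work. You correctly reduce to excluding $22\times$, but then only say you ``expect a direct computation'' showing the half-period shift ``can be absorbed by a translation in $L\cap T$ along $\ell$,'' with a fallback of proving $v\in\mathbb{Z}u$.

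\begin{itemize}
\item The planned step is impossible by your own definitions. In your second case $v\in\tfrac12u+\mathbb{Z}u$ with $u$ primitive in $T$ along $\ell$, so no translation in $T$ parallel to $\ell$ cancels $v$. Any mirror obtained from $g$ must come from a translation with a component perpendicular to $\ell$, which shifts the axis.
\item The fallback is false in general. In the $2\star22$ covers (e.g.\ signature $(4,2,1)$) there are glide axes halfway between parallel mirrors whose glide vector is exactly half the primitive period along the axis. For those glides $v\notin\mathbb{Z}u$.
\item The counting of half-turn centres does not rule anything out. The rotation subgroup of a $22\times$ group also has four classes of half-turn centres modulo its lattice; the glides merely identify them in pairs.
\end{itemize}

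The missing idea is that $T$ is all of $L$, not merely something between $2L$ and $L$. The set $Z$ consists of hexagon centres, so $L$ lies in the translation lattice of the hexagonal tiling. Hence every translation by an element of $L$ preserves both $W$ and $Z$.

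With this, the lemma follows at once for any glide $g(x)=\sigma x+w$ in $G$:
\begin{itemize}
\item Pick $z_0\in Z$. Since $g(Z)=Z=z_0+L$, we have $g(z_0)-z_0\in L=T$.
\item Composing $g$ with translation by $z_0-g(z_0)$ gives an element of $G$ that fixes $z_0$ and has linear part the reflection $\sigma$.
\item That element is therefore the reflection in the line through $z_0$ parallel to $\ell$.
\end{itemize}
Composing it with the half-turn about $z_0$ also gives the perpendicular mirror. Equivalently, in a $22\times$ group the glides swap the $T$-cosets of $2$-fold centres in pairs, so no such coset is invariant, whereas $Z$ is an invariant one. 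No case split on $v$ and no appeal to the wallpaper classification is needed.

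This is the lattice version of what the paper does in coordinates. The paper rotates the glide line to vertical or horizontal and notes that a vertical glide runs along a spine column or midway between two. It then uses the offset $f$ to show that the spine columns on either side of each special hexagon have special hexagons at the same height, which yields a vertical mirror through every spine.
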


\begin{proof} Suppose there is a glide line for the hexagonal cover of a (3, 6)-fullerene.
A glide reflection repeated twice is a translation in the direction of the glide line. Since the only directions of translation symmetry for a hexagonal tiling are vertical or at a 30, 60, 90, 120, or 150 degree angle from vertical, the glide line must be in one of these directions.  By rotating the hexagonal tiling if necessary by a multiple of $60^\circ$, we can assume that the glide line is either vertical or horizontal. 

Suppose the glide line is vertical. Since the glide reflection takes special hexagons to special hexagons, it must either cut through a vertical spine or lie halfway between two consecutive spine columns. See Figure~\ref{fig-glideLinePossibleOrImpossible}. Consider the signature $(s, b, f)$ for the vertical orientation (vertical spines). By the definition of offset, moving a special hexagon in the approximately SW to NE direction by $b+1$ steps would land it $f$ hexagons below a special hexagon. But moving it, instead, in the SE to NW direction by $b+1$ steps would also land it $f$ hexagons below a special hexagon due to the glide symmetry. It follows that for every special hexagon, the two spine columns to the left and to the right contain special hexagons  at the same height. Therefore, there is a vertical mirror line of symmetry through each vertical spine in the hexagonal cover, proving the lemma in the case that a glide line is vertical. 

 \begin{figure} 

 \centering
 
\includegraphics[height = 8 cm]{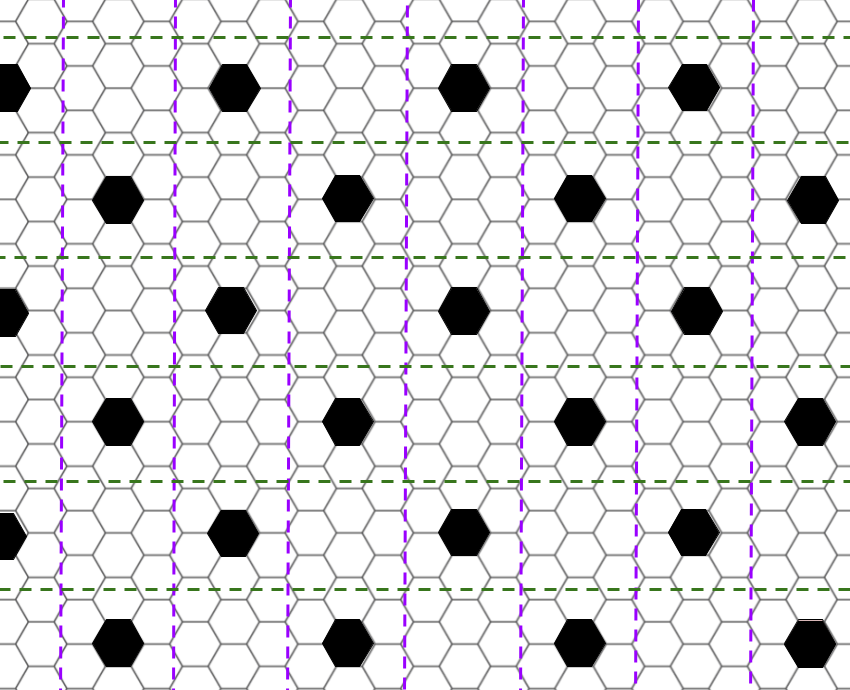}
   \caption {Glide reflection symmetry forces mirror symmetry}
   \label{fig-glideLinePossibleOrImpossible}
   \end{figure}

 If, instead, the glide line is horizontal, then consider a special hexagon that is below this glide line and as close to the glide line as possible. By definition of offset, moving this hexagon in the approximately SW to NE direction by $b+1$ columns puts it $f$ hexagons below a second special hexagon. Equivalently, moving the second special hexagon $b+1$ columns in the NE to SW direction puts it $f$ hexagons above a special hexagon. But due to glide symmetry, moving this second special hexagon $b+1$ columns in the NW to SE direction also puts it $f$ hexagons above a special hexagon. As before, it follows that for every special hexagon, the two spine columns to the left and to the right contain special hexagons  at the same height. Therefore, there is a vertical mirror line of symmetry in the hexagonal cover, proving the lemma in the case that a glide line is horizontal, and finishing the proof.
\end{proof}

The following proposition shows that the only other symmetry type for a $(3, 6)$-fullerene is $222$ symmetry. See Figure~\ref{fig-502} for an example of the corresponding $2222$ symmetry upstairs. 

\begin{figure}

\centering 

\includegraphics[height = 8 cm]{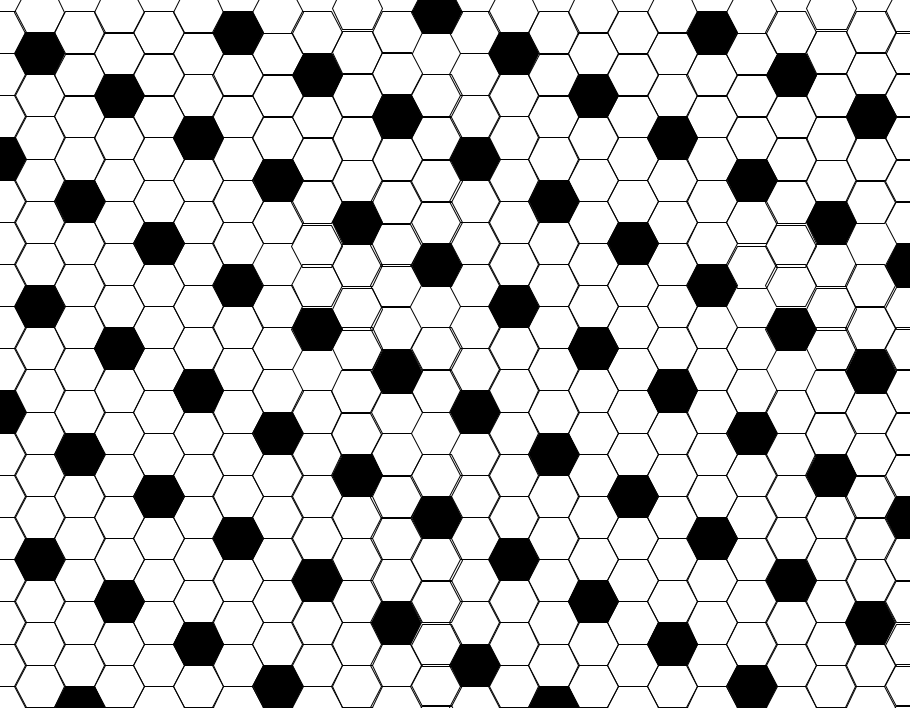}
\caption{The $(3,6)$-fullerene with signature (5, 0, 2) has 2222 symmetry upstairs and 222 symmetry downstairs.}
\label{fig-502}
\end{figure}

\begin{proposition}
    A (3,6)-fullerene has $222$ symmetry if and only if it has no 3-fold rotational symmetry and no mirror symmetry. 
    \label{prop-222}
\end{proposition}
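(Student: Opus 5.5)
The forward direction is immediate. The group $222$ (Sch\"onflies $D_2$) consists only of the identity and three $180^\circ$ rotations. It therefore contains no reflections and no element of order 3. A $(3,6)$-fullerene with $222$ symmetry thus has neither mirror symmetry nor 3-fold rotational symmetry.

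For the converse, suppose the $(3,6)$-fullerene has no 3-fold rotational symmetry and no mirror symmetry. I would work upstairs in the hexagonal tiling cover and determine its wallpaper group $G$: the isometries preserving the hexagonal tiling and the set $Z$ of centers of special hexagons. The plan is to show that $G$ has type $2222$ and then invoke Proposition~\ref{prop-upstairsDownstairsSymm}(5).

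The steps are as follows.

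\begin{enumerate}
\item $G$ contains the lattice of translations of the parallelogram grid, so it is a wallpaper group.
\item $G$ contains the $180^\circ$ rotations about the points of $Z$, since these generate $\Gamma$ and preserve both the tiling and $Z$. Hence $G$ has 2-fold rotation centers.
\item Since the fullerene has no mirror symmetry, Proposition~6.1 of \cite{green2025enumerate} shows that the cover has no mirror lines. By Lemma~\ref{lem-noglides}, it then has no glide reflections either. So $G$ consists only of orientation-preserving isometries.
\item Any rotation in $G$ maps the hexagonal tiling to itself, so its order divides 6. A rotation of order 3 or 6 in $G$ would give a $120^\circ$ rotation taking special hexagons to special hexagons. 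By \cite{green2025enumerate} (as used in the proof of Lemma~\ref{lem-2star2}), this forces 3-fold rotational symmetry downstairs, contrary to hypothesis. Hence every nontrivial rotation in $G$ has order 2.
\end{enumerate}

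By the classification of wallpaper groups \cite{conway2016symmetries}, the orientation-preserving groups are $o$, $2222$, $333$, $442$ and $632$. The only one containing 2-fold rotations but no 3-, 4- or 6-fold rotations is $2222$. Proposition~\ref{prop-upstairsDownstairsSymm} then gives $222$ symmetry downstairs.

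The main obstacle is step 4. The cited result from \cite{green2025enumerate} concerns $60^\circ$ rotations about the centers of special hexagons, so I must check that any order-3 element of $G$ yields such a symmetry. I would handle this as follows. Let $\rho$ be a $120^\circ$ rotation in $G$ about some point $c$, and let $\sigma$ be the $180^\circ$ rotation about a point $z\in Z$. Then $\rho\sigma\rho^{-1}$ is the $180^\circ$ rotation about $\rho(z)\in Z$. The lattice $Z$ is invariant under $\rho$ composed with a suitable translation, so $Z$ is a lattice invariant under a linear $120^\circ$ rotation, hence a triangular lattice. Conjugating $\rho$ by a translation of the grid and composing with an element of $\Gamma$ then produces a $120^\circ$ rotation about a special hexagon center. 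Combining this with $\sigma$ gives a $60^\circ$ rotation about that center, which is exactly the hypothesis of Proposition~3.2 of \cite{green2025enumerate}, and so the fullerene has 3-fold rotational symmetry. If this bookkeeping proves delicate, I would instead argue directly that the rotational part of $\rho$ maps $Z-Z$ to itself. Equality of the three signatures then follows because the three signature orientations are permuted by $120^\circ$ rotation.
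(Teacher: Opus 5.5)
Your proposal is correct and follows essentially the same route as the paper. You rule out mirrors via Proposition 6.1 of \cite{green2025enumerate}, glides via Lemma~\ref{lem-noglides}, and $3$- and $6$-fold rotations via Proposition 3.2 of \cite{green2025enumerate}, then conclude $2222$ upstairs from the wallpaper classification and $222$ downstairs from Proposition~\ref{prop-upstairsDownstairsSymm}. The only addition is your reduction of an arbitrary $120^\circ$ rotation to one centered at a point of $Z$, which the paper takes directly from the citation; it is sound, and the cleanest way to do it is to compose $\rho$ with the translation by $z-\rho(z)$, which lies in the grid lattice and hence in $G$.
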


\begin{proof}
    If a (3,6)-fullerene has $222$ symmetry, then it clearly has no 3-fold rotational symmetry and no mirror symmetry. 
    
    Conversely, suppose a (3,6)-fullerene has no 3-fold rotational symmetry and no mirror symmetry.  Since the (3,6)-fullerene has no 3-fold rotational symmetry, by Proposition~3.2 of \cite{green2025enumerate}, in the hexagonal tiling that covers it, there is no $120^\circ$ rotation that takes special hexagons to special hexagons, so there is no 3-fold rotational symmetry of the hexagonal tiling, and therefore no 6-fold rotational symmetry. The only rotational symmetry that the hexagonal covering can have is 2-fold, since only 2-fold, 3-fold, and 6-fold rotations can take a hexagon to itself. In fact, the hexagonal tiling does have 2-fold rotations, including rotations around the center of each special hexagon and rotations around a point in the middle of each spine. However, there are no mirror lines in the hexagonal tiling cover, by Proposition 6.1 of \cite{green2025enumerate}, since there is no reflection symmetry the (3,6)-fullerene itself.  There can be no glide reflection symmetry in the hexagonal tiling cover, by Lemma~\ref{lem-noglides}. 

    So a (3,6)-fullerene with no mirror symmetry and no 3-fold rotational symmetry must have a hexagonal cover with 2-fold rotational symmetry and no mirror symmetry and no glide reflection symmetry. In addition, the hexagonal covering cannot have 4-fold rotational symmetry, since 4-fold rotational symmetry would not preserve hexagons. By the classification of wallpaper patterns \cite{conway2016symmetries}, the hexagonal covering must have symmetry type $2222$. Therefore, by Proposition~\ref{prop-upstairsDownstairsSymm}, the (3,6)-fullerene must have symmetry type $222$.
\end{proof}

\begin{proposition}
    Every (3,6)-fullerene has one of the following five symmetry types: $\star 332$, $332$, $2\star 2$,
     $\star 222$,   or $222$.
    \label{prop-listSymmTypes}
\end{proposition}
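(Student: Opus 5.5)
The plan is a case split on two yes/no properties of a $(3,6)$-fullerene: whether or not it has mirror symmetry, and whether or not it has 3-fold rotational symmetry. These four cases are exhaustive, so it suffices to check that each case lands in one of the five listed types.

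\textbf{Mirror symmetry present.} If the $(3,6)$-fullerene has mirror symmetry, then the first statement of Proposition~\ref{prop-mirrorSymmetryClassify} says its symmetry type is $\star 332$, $2\star 2$, or $\star 222$. This covers both sub-cases, with and without 3-fold rotational symmetry.

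\textbf{No mirror symmetry.} There are two sub-cases.
\begin{enumerate}
\item If the fullerene has 3-fold rotational symmetry, Proposition~\ref{prop-332} gives symmetry type $332$.
\item If it has no 3-fold rotational symmetry, Proposition~\ref{prop-222} gives symmetry type $222$.
\end{enumerate}

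Combining the two branches, every $(3,6)$-fullerene has one of the five types $\star 332$, $332$, $2\star 2$, $\star 222$, or $222$.

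There is essentially no obstacle here. The substantive work has already been done in the earlier results, which are stated as biconditionals (or at least as the needed implications) for exactly these hypotheses. The one point needing care is that ``mirror symmetry'' and ``3-fold rotational symmetry'' mean the same thing in each of Propositions~\ref{prop-mirrorSymmetryClassify}, \ref{prop-332} and \ref{prop-222}, namely symmetry of some (maximally symmetric) spherical embedding. Proposition~\ref{prop-symmEquivSummary} makes these notions well defined, so the case split is legitimate.
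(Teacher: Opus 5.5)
Your proposal is correct and is the same argument as the paper's, which simply cites Propositions~\ref{prop-mirrorSymmetryClassify}, \ref{prop-332}, and \ref{prop-222}. Your explicit split on mirror symmetry and 3-fold rotational symmetry is exactly the reasoning that citation relies on.
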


\begin{proof}
This follows from Propositions~\ref{prop-mirrorSymmetryClassify}, \ref{prop-332}, and \ref{prop-222} .
\end{proof}

\section{Counts of (3,6)-fullerenes by symmetry type}
\label{sec-countsBySymmType}

This section enumerates the $(3,6)$ fullerenes in each of the five possible symmetry types: $\star 332$, $332$, $2\star 2$, $\star 222$, and $222$. All counts in this section, including Table~\ref{table-countBySymmType}, include godseyes.  (However, the existence results of Section~\ref{sec-conj1And2} exclude them, to match reference \cite{deza2005zigzag}.)

\begin{proposition}
Suppose $\frac{V}{4}$ factors as $\displaystyle 2^x 3^y \prod_{i = 1}^z p_i^{k_i} \prod_{j = 1}^w q_j^{\ell_j}$ where $x, y, z, w \geq 0$, the $p_i$ are primes with $p_i \equiv 1 \pmod 3$ the $q_j$ are primes greater than 2 with $q_j \equiv 2 \pmod 3$, and  $k_i, \ell_j \geq 1$ for $1 \leq i \leq z$ and $1 \leq j \leq w$. 

\begin{enumerate}
\item[(a)] The number of (3,6)-fullerenes with $V$ vertices with $\star 332$ symmetry is 1 if all $k_i$ are even, all $\ell_j$ are even, and $x$ is even, i.e. if $\frac{V}{4}$ is a perfect square or 3 times a perfect square. The number is 0 otherwise.

\item[(b)] The number of $(3,6)$-fullerenes with $V$ vertices with either $\star 332$ symmetry or $2\star 2$ symmetry is \\$\displaystyle  (y+1)\prod_{i = 1}^z (k_i + 1)\prod_{j = 1}^w (\ell_j + 1)$ if $x = 0$,  and \\
$\displaystyle (x - 1)(y+1)\prod_{i = 1}^z (k_i + 1)\prod_{j = 1}^w (\ell_j + 1)$ if $x \geq 1$.

Therefore, the number of $(3,6)$-fullerenes with $V$ vertices and $2 \star 2$ symmetry can be found by subtracting the number in part (a) from this number.

\item[(c)] The number of (3,6)-fullerenes with $V$ vertices with $\star 222$ symmetry is \\
$\displaystyle x(y+1) \prod_{i = 1}^z (k_i + 1)\prod_{j = 1}^w (\ell_j + 1)$.

\item[(d)] The number of (3,6)-fullerenes with $V$ vertices and mirror symmetry is the sum of the numbers in (b) and (c), which is \\
$\displaystyle  (y+1)\prod_{i = 1}^z (k_i + 1)\prod_{j = 1}^w (\ell_i + 1)$ if $x = 0$,   and \\
$\displaystyle  (2x - 1 )(y + 1)\prod_{i = 1}^z (k_i + 1)\prod_{j = 1}^w (\ell_j + 1)$ if $x \geq 1$.

\item[(e)] The number of (3,6)-fullerenes with $V$ vertices and $332$ symmetry is \\
$\displaystyle \prod_{i = 1}^z (k_i + 1) - 1$ if all $k_i$ are even, all $\ell_j$ are even, and $x$ is even, \\
$\displaystyle \prod_{i = 1}^z (k_i + 1)$ if all $\ell_j$ are even and $x$ is even, but at least one $k_i$ is odd, and \\
0 if $x$ is odd or any $\ell_j$ is odd.

\item[(f)] The total number of $(3,6)$-fullerenes with $V$ vertices is given by 
\[ \displaystyle  \frac{1}{3} \left[ \left( \dfrac{2^{x+1} - 1}{2 - 1} \right) \left( \dfrac{3^{y+1} - 1}{3 - 1} \right) \prod_{i = 1}^z \dfrac{p_i^{k_i +1} - 1}{p_i - 1}  \prod_{j = 1}^w \dfrac{q_j^{\ell_j + 1} - 1}{q_j - 1}  \right] + \frac{2}{3} \prod_{k = 1}^z (k_i + 1) \] 
if all $\ell_i$ are even and $x$ is even, and \[ \displaystyle  \frac{1}{3} \left[ \left( \dfrac{2^{x+1} - 1}{2 - 1} \right) \left( \dfrac{3^{y+1} - 1}{3 - 1} \right) \prod_{i = 1}^z \dfrac{p_i^{k_i +1} - 1}{p_i - 1}  \prod_{j = 1}^w \dfrac{q_j^{\ell_j + 1} - 1}{q_j - 1}  \right] \] otherwise.

Therefore, the number of $(3,6)$-fullerenes with $V$ vertices and $222$ symmetry can be found by subtracting the number in part (d) and the number in part (e) from this number.
\end{enumerate}

\label{prop-count}
\end{proposition}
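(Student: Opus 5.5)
We will derive every count from the signature dictionary of Section~\ref{sec-sigAndSymm}, combined with the counts in \cite{green2025enumerate}. Write $N = V/4 = (s+1)(b+1)$ by Equation~\ref{eqn-vCount}. A signature with $V$ vertices is then a factorization $N = mn$ with $m = s+1$, $n = b+1$, together with an offset $f \in \mathbf{Z}_m$. Parts (e) and (f), and the totals of (a), we will take from \cite{green2025enumerate}, which already counts all $(3,6)$-fullerenes, those with 3-fold rotational symmetry, and those with both 3-fold and mirror symmetry. We then translate these counts using Propositions~\ref{prop-mirrorSymmetryClassify}, \ref{prop-332} and \ref{prop-222}.

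For (f), the total count in \cite{green2025enumerate} is Burnside over the $\mathbf{Z}_3$-action on signatures: $\frac13\bigl(\sigma(N) + 2R\bigr)$, where $\sigma(N)$ counts pairs (divisor, offset) and $R$ is the number with coinciding signatures. We restate that formula, identifying $R$ with $\prod(k_i+1)$ exactly when $x$ and all $\ell_j$ are even; this is the number of ideals of norm $N$ in the Eisenstein integers. For (a), Proposition~\ref{prop-mirrorSymmetryClassify}(a) says $\star 332$ corresponds to fullerenes with both 3-fold and mirror symmetry. We will argue, or quote from \cite{green2025enumerate}, that there is exactly one such fullerene, the one generated by $n$ or $n\omega$ up to units with $N = n^2$ or $3n^2$, and that none exists otherwise. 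Part (e) is then $R - (\text{count in (a)})$ by Proposition~\ref{prop-332}. Here the condition ``all $k_i$ even'' is needed so that $N$ is a square or three times a square, given that $x$ and all $\ell_j$ are even.

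The core new work is (b) and (c), which we handle via Corollary~6.3 of \cite{green2025enumerate}: each mirror-symmetric fullerene has a \emph{unique} self-mirror signature. So the number of mirror-symmetric fullerenes equals the number of self-mirror signatures, and Lemma~\ref{lem-selfmirroralgebra} enumerates these directly. For (c), count factorizations $N = mn$ with $n$ even; any $m$ works, and $f = -n/2$ is forced. Writing $N = 2^x N'$, we need $n = 2^a d$ with $a \geq 1$ and $d \mid N'$, which gives $x\,\tau(N')$ choices. Here $\tau(N') = (y+1)\prod(k_i+1)\prod(\ell_j+1)$.

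For (b), count factorizations with $m$ and $n$ of the same parity, with $f = (m-n)/2$. If $x = 0$, every factorization has both factors odd, giving $\tau(N')$. If $x \geq 1$, both factors are even when $2^a \| n$ with $1 \leq a \leq x-1$, giving $(x-1)\tau(N')$. The subtraction for $2\star 2$ then follows from Proposition~\ref{prop-mirrorSymmetryClassify}(b). Part (d) is the sum of (b) and (c), and the $222$ count is the remainder by Proposition~\ref{prop-listSymmTypes}.

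The main obstacle is making sure that the uniqueness from Corollary~6.3 of \cite{green2025enumerate} really makes the self-mirror counting injective within each case. In particular, a $\star 332$ fullerene must be counted exactly once in (b), and never in (c). For the latter, we check that a coinciding-signature fullerene cannot also have a $\star 222$ self-mirror signature, using the fact that Lemma~\ref{lem-star222} gives a symmetry type incompatible with 3-fold rotation. The one-fullerene claim in (a) also needs a careful citation to the 3-fold count in \cite{green2025enumerate}.
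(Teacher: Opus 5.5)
Your proposal is correct and follows essentially the same route as the paper: parts (a), (e) and (f) are quoted from \cite{green2025enumerate} (Proposition 7.2 and Theorems 5.1 and 5.2 there), and parts (b) and (c) come from counting the self-mirror signatures of the two forms in Lemma~\ref{lem-selfmirroralgebra}, with your appeal to uniqueness of the self-mirror signature (Corollary 6.3) doing the job of the paper's appeal to Lemma 6.2 of \cite{green2025enumerate}. One small slip, harmless because you quote Proposition 7.2 for (a) anyway: for $N = 3n^2$ the Eisenstein generator should be $n(1-\omega)$, not $n\omega$, since $n\omega$ has norm $n^2$.
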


\begin{proof}
(a) This is a restatement of Proposition~7.2 of \cite{green2025enumerate}.

(b) By Proposition~\ref{prop-mirrorSymmetryClassify}, a (3,6)-fullerene has $\star 332$ symmetry or  $2\star 2$ if and only if it has a signature $(s, b, f)$ where $s$ and $b$ are either both even or both odd and $f \equiv \dfrac{s-b}{2} \pmod {s+1}$. This signature is self-mirror by Lemma~\ref{lem-selfmirroralgebra}. If the (3,6)-fullerene does not have 3-fold rotational symmetry, then by Lemma 6.2 of \cite{green2025enumerate}, its other two signatures are not self-mirror and therefore can't be of this form. Therefore, there is a one-to-one correspondence between (3,6)-fullerenes with either $\star 332$ symmetry  or $2\star 2$ and signatures of the form $\left(s, b, \dfrac{s-b}{2}\right)$ where $s$ and $b$ have the same parity. Since $\frac{V}{4} = (s+1)(b+1)$ by Equation~\ref{eqn-vCount}, the number of (3,6)-fullerenes with $V$ vertices and $\star 332$ or $2\star 2$  symmetry is equal to the number of ways of factoring $\displaystyle \frac{V}{4} = 2^x 3^y \prod_{i = 1}^z p_i^{k_i}$ into two factors of the same parity. There are $\displaystyle (y+1) \prod_{i = 1}^z (k_i + 1)\prod_{j = 1}^w (\ell_j + 1)$ ways if $x = 0$. If $x \geq 1$, then there are $\displaystyle (x-1) (y+1)\prod_{i = 1}^z (k_i + 1)\prod_{j = 1}^w (\ell_j + 1)$ ways, where the $x-1$ comes from the fact that both of the factors will need to be even.

(c) By Proposition~\ref{prop-mirrorSymmetryClassify}, a (3,6)-fullerene has $\star 222$ symmetry if and only if it has a signature of the form $(s, b, f)$ where $b$ is odd and $f \equiv \dfrac{-(b+1)}{2} \pmod {s+1}$. This signature is self-mirror by Lemma~\ref{lem-selfmirroralgebra}. Since the (3, 6)-fullerene does not have 3-fold rotational symmetry, its other two other signatures are not self-mirror by Lemma 6.2 of \cite{green2025enumerate}. Therefore, the two other signatures cannot be of the form $(s, b, f)$ where $b$ is odd and $f \equiv \dfrac{-(b+1)}{2} \pmod {s+1}$. Therefore, there is a one-to-one correspondence between (3,6)-fullerenes with $\star 222$ symmetry and signatures $(s, b, \dfrac{-(b+1)}{2})$ where $b$ is odd. Let $V$ represent the number of vertices of a (3,6)-fullerene. By Equation~\ref{eqn-vCount},  $\frac{V}{4} = (s+1)(b+1)$, so the number of (3,6)-fullerenes with $V$ vertices and $\star 222$ symmetry is equal to the number of pairs $(s, b)$ where $b$ is odd and $\frac{V}{4} = (s+1)(b+1)$. This is equal to the number of even factors $b+1$ of $\frac{V}{4}$. Even factors have to have at least one factor of 2, can have between 0 and y copies of $3$, and can have between 0 and $k_i$ factors of $p_i$ for each $i$, $1 \leq i \leq z$ and between $0$ and $\ell_j$ factors of $q_j$ for $1 \leq j \leq w$. There are $\displaystyle x (y+1) \prod_{i = 1}^z (k_i + 1)\prod_{j = 1}^w (\ell_i + 1)$ such factors.

(d) This follows immediately from the fact that the (3,6)-fullerenes with mirror symmetry are exactly the (3,6)-fullerenes with $\star 332$ or $2\star 2$ or $\star 222$ symmetry (Proposition~\ref{prop-mirrorSymmetryClassify}). It was also proved in Proposition~7.3 of \cite{green2025enumerate}.

(e) By Theorem 5.1 of \cite{green2025enumerate}, the number of $(3,6)$-fullerenes with 3-fold rotational symmetry is $\displaystyle \prod_{i = 1}^z (k_i + 1)$ if all $\ell_j$ are even and $x$ is even, and $0$ otherwise. The statement follows from this and part (a). 

(f) The formula for the total numbrer of $(3,6)$-fullerenes with $V$ vertices is given in Theorem~5.2 of \cite{green2025enumerate}. All $(3,6)$-fullerenes have one of the following five symmetry types by Theorem~\ref{prop-listSymmTypes}: $\star 332$, $332$, $2\star 2$, $\star 222$, $222$.  Therefore, the  number of $(3,6)$-fullerenes with $V$ vertices and $222$ symmetry is equal to the total number of $(3,6)$-fullerenes with $V$ vertices minus the number with mirror symmetry minus the number with $332$ symmetry.

\end{proof}

Table \ref{table-countBySymmType} gives the number $(3,6)$-fullerenes by symmetry type for up to 400 vertices. The 2-connected graphs called godeyes are included in these counts. Note that this enumeration counts left-handed and right-handed fullerenes as distinct. If, instead, we want the counts in which left-handed and right-handed versions are considered the same, then the numbers in the $332$ and $222$ columns need to be halved and the number in the total column needs to be reduced accordingly.

\begin{table}
\refstepcounter{table}\label{table-countBySymmType}
\begin{tabular}{|c|c|c|c|c|c|c||c|c|c|c|c|c|c|} \hline
$V/4$	&	$\star 332$ & $332$ & 	$2\star 2$	&	$\star 222$		&	$222$	&	Total	&	V/4	&	$\star 332$	& $332$	& 	$2\star 2$	& $\star 222$	&			$222$	&	Total	\\ \hline \hline
1	&	1	&	0	&	0	&	0	&	0	&	1	&	51	&	0	&	0	&	4	&	0	&	20	&	24	\\ \hline
2	&	0	&	0	&	0	&	1	&	0	&	1	&	52	&	0	&	2	&	2	&	4	&	26	&	34	\\ \hline
3	&	1	&	0	&	1	&	0	&	0	&	2	&	53	&	0	&	0	&	2	&	0	&	16	&	18	\\ \hline
4	&	1	&	0	&	0	&	2	&	0	&	3	&	54	&	0	&	0	&	0	&	4	&	36	&	40	\\ \hline
5	&	0	&	0	&	2	&	0	&	0	&	2	&	55	&	0	&	0	&	4	&	0	&	20	&	24	\\ \hline
6	&	0	&	0	&	0	&	2	&	2	&	4	&	56	&	0	&	0	&	4	&	6	&	30	&	40	\\ \hline
7	&	0	&	2	&	2	&	0	&	0	&	4	&	57	&	0	&	2	&	4	&	0	&	22	&	28	\\ \hline
8	&	0	&	0	&	2	&	3	&	0	&	5	&	58	&	0	&	0	&	0	&	2	&	28	&	30	\\ \hline
9	&	1	&	0	&	2	&	0	&	2	&	5	&	59	&	0	&	0	&	2	&	0	&	18	&	20	\\ \hline
10	&	0	&	0	&	0	&	2	&	4	&	6	&	60	&	0	&	0	&	4	&	8	&	44	&	56	\\ \hline
11	&	0	&	0	&	2	&	0	&	2	&	4	&	61	&	0	&	2	&	2	&	0	&	18	&	22	\\ \hline
12	&	1	&	0	&	1	&	4	&	4	&	10	&	62	&	0	&	0	&	0	&	2	&	30	&	32	\\ \hline
13	&	0	&	2	&	2	&	0	&	2	&	6	&	63	&	0	&	2	&	6	&	0	&	28	&	36	\\ \hline
14	&	0	&	0	&	0	&	2	&	6	&	8	&	64	&	1	&	0	&	4	&	6	&	32	&	43	\\ \hline
15	&	0	&	0	&	4	&	0	&	4	&	8	&	65	&	0	&	0	&	4	&	0	&	24	&	28	\\ \hline
16	&	1	&	0	&	2	&	4	&	4	&	11	&	66	&	0	&	0	&	0	&	4	&	44	&	48	\\ \hline
17	&	0	&	0	&	2	&	0	&	4	&	6	&	67	&	0	&	2	&	2	&	0	&	20	&	24	\\ \hline
18	&	0	&	0	&	0	&	3	&	10	&	13	&	68	&	0	&	0	&	2	&	4	&	36	&	42	\\ \hline
19	&	0	&	2	&	2	&	0	&	4	&	8	&	69	&	0	&	0	&	4	&	0	&	28	&	32	\\ \hline
20	&	0	&	0	&	2	&	4	&	8	&	14	&	70	&	0	&	0	&	0	&	4	&	44	&	48	\\ \hline
21	&	0	&	2	&	4	&	0	&	6	&	12	&	71	&	0	&	0	&	2	&	0	&	22	&	24	\\ \hline
22	&	0	&	0	&	0	&	2	&	10	&	12	&	72	&	0	&	0	&	6	&	9	&	50	&	65	\\ \hline
23	&	0	&	0	&	2	&	0	&	6	&	8	&	73	&	0	&	2	&	2	&	0	&	22	&	26	\\ \hline
24	&	0	&	0	&	4	&	6	&	10	&	20	&	74	&	0	&	0	&	0	&	2	&	36	&	38	\\ \hline
25	&	1	&	0	&	2	&	0	&	8	&	11	&	75	&	1	&	0	&	5	&	0	&	36	&	42	\\ \hline
26	&	0	&	0	&	0	&	2	&	12	&	14	&	76	&	0	&	2	&	2	&	4	&	40	&	48	\\ \hline
27	&	1	&	0	&	3	&	0	&	10	&	14	&	77	&	0	&	0	&	4	&	0	&	28	&	32	\\ \hline
28	&	0	&	2	&	2	&	4	&	12	&	20	&	78	&	0	&	0	&	0	&	4	&	52	&	56	\\ \hline
29	&	0	&	0	&	2	&	0	&	8	&	10	&	79	&	0	&	2	&	2	&	0	&	24	&	28	\\ \hline
30	&	0	&	0	&	0	&	4	&	20	&	24	&	80	&	0	&	0	&	6	&	8	&	48	&	62	\\ \hline
31	&	0	&	2	&	2	&	0	&	8	&	12	&	81	&	1	&	0	&	4	&	0	&	36	&	41	\\ \hline
32	&	0	&	0	&	4	&	5	&	12	&	21	&	82	&	0	&	0	&	0	&	2	&	40	&	42	\\ \hline
33	&	0	&	0	&	4	&	0	&	12	&	16	&	83	&	0	&	0	&	2	&	0	&	26	&	28	\\ \hline
34	&	0	&	0	&	0	&	2	&	16	&	18	&	84	&	0	&	2	&	4	&	8	&	62	&	76	\\ \hline
35	&	0	&	0	&	4	&	0	&	12	&	16	&	85	&	0	&	0	&	4	&	0	&	32	&	36	\\ \hline
36	&	1	&	0	&	2	&	6	&	22	&	31	&	86	&	0	&	0	&	0	&	2	&	42	&	44	\\ \hline
37	&	0	&	2	&	2	&	0	&	10	&	14	&	87	&	0	&	0	&	4	&	0	&	36	&	40	\\ \hline
38	&	0	&	0	&	0	&	2	&	18	&	20	&	88	&	0	&	0	&	4	&	6	&	50	&	60	\\ \hline
39	&	0	&	2	&	4	&	0	&	14	&	20	&	89	&	0	&	0	&	2	&	0	&	28	&	30	\\ \hline
40	&	0	&	0	&	4	&	6	&	20	&	30	&	90	&	0	&	0	&	0	&	6	&	72	&	78	\\ \hline
41	&	0	&	0	&	2	&	0	&	12	&	14	&	91	&	0	&	4	&	4	&	0	&	32	&	40	\\ \hline
42	&	0	&	0	&	0	&	4	&	28	&	32	&	92	&	0	&	0	&	2	&	4	&	50	&	56	\\ \hline
43	&	0	&	2	&	2	&	0	&	12	&	16	&	93	&	0	&	2	&	4	&	0	&	38	&	44	\\ \hline
44	&	0	&	0	&	2	&	4	&	22	&	28	&	94	&	0	&	0	&	0	&	2	&	46	&	48	\\ \hline
45	&	0	&	0	&	6	&	0	&	20	&	26	&	95	&	0	&	0	&	4	&	0	&	36	&	40	\\ \hline
46	&	0	&	0	&	0	&	2	&	22	&	24	&	96	&	0	&	0	&	8	&	10	&	66	&	84	\\ \hline
47	&	0	&	0	&	2	&	0	&	14	&	16	&	97	&	0	&	2	&	2	&	0	&	30	&	34	\\ \hline
48	&	1	&	0	&	5	&	8	&	28	&	42	&	98	&	0	&	0	&	0	&	3	&	54	&	57	\\ \hline
49	&	1	&	2	&	2	&	0	&	16	&	21	&	99	&	0	&	0	&	6	&	0	&	46	&	52	\\ \hline
50	&	0	&	0	&	0	&	3	&	28	&	31	&	100	&	1	&	0	&	2	&	6	&	64	&	73	\\ \hline
\end{tabular}
  \begin{minipage}{1.0 \textwidth}
\small {Table 1: Enumeration of $(3, 6)$-fullerenes by symmetry type for $\leq 400$ vertices. Godseyes are included and left-handed and right-handed versions of chiral fullerenes are counted as distinct.}
 \end{minipage}
\end{table}

\section{Existence of fullerenes with specified symmetry type and number of vertices}
\label{sec-conj1And2}

In this section, we prove parts (i) and (ii) and the first statement part (iii) of Conjecture~5.7 of \cite{deza2005zigzag} regarding the existence of (3,6)-fullerenes with $\star 222$, $2\star 2$, and $222$ symmetry types. We also prove statements about the existence of (3,6)-fullerenes with $\star332$ and $332$ symmetry.

Note that the polyhedra considered in \cite{deza2005zigzag} are (3, 6)-fullerenes that are 3-connected, and therefore not godseyes. Therefore, to match \cite{deza2005zigzag}, all existence results in this section exclude godseyes. This in contrast to Section~\ref{sec-countsBySymmType}, in which all enumerations include godseyes. From \cite{green2024polyhedra}, a (3,6)-fullerene is a godseye if and only if its signatures are of the form: $(n,0,0)$, $(n,0,n)$, and $(0,n,0)$ for some number $n \geq 1$.

Part (i) of Conjecture~5.7 can be restated as:
\begin{proposition}
There exists a (3,6)-fullerene that is not a godseye with $V$ vertices with $\star 222$ symmetry if and only if $\frac{V}{4}$ is even and $V \geq 16$. There are no tight (3,6)-fullerenes with $\star 222$ symmetry. 

%Original version:
%     $3_n(D_{2h})$ exists if and only if $\dfrac{n}{4}$ is even and $n \geq 16$. There are no tight $3_n(D_{2h})$.
\end{proposition}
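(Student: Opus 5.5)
The plan is to rely on Proposition~\ref{prop-mirrorSymmetryClassify}(c). That result says a $(3,6)$-fullerene has $\star 222$ symmetry exactly when it has a signature $(s,b,f)$ with $b$ odd and $f \equiv -\frac{b+1}{2} \pmod{s+1}$. Everything then reduces to arithmetic with Equation~\ref{eqn-vCount}, $\frac{V}{4}=(s+1)(b+1)$, together with the characterization of godseyes by their signatures $(n,0,0)$, $(n,0,n)$, $(0,n,0)$.

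\emph{Necessity.} Suppose a fullerene has $\star 222$ symmetry. Then it has a signature with $b$ odd, so $b+1$ is even and hence $\frac{V}{4}=(s+1)(b+1)$ is even. To get $V\ge 16$, I only need to exclude $\frac{V}{4}=2$. In that case $b+1=2$ and $s=0$, so the signature is $(0,1,0)$. This is the godseye with $n=1$, since it is the signature $(0,n,0)$. For a cleaner argument, I would instead note that $\frac{V}{4}=2$ admits only the signatures $(1,0,\cdot)$ and $(0,1,0)$. Table~\ref{table-countBySymmType} (equivalently Proposition~\ref{prop-count}) shows there is exactly one fullerene with $V=8$, and that fullerene is a godseye. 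So no non-godseye with $\star222$ symmetry has $V=8$.

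\emph{Sufficiency.} Let $\frac{V}{4}=2m$ with $m\ge 2$. I will take $b=1$, $s=m-1\ge 1$ and $f\equiv -1 \pmod{m}$, that is, $f=m-1$. Signatures can be realized for any triple, per \cite{green2024polyhedra}, so this signature gives a fullerene with $V$ vertices. By Proposition~\ref{prop-mirrorSymmetryClassify}(c) that fullerene has $\star 222$ symmetry. It remains to show it is not a godseye. A godseye has all three of its signatures of the forms $(n,0,0)$, $(n,0,n)$, $(0,n,0)$. Our signature $(m-1,1,m-1)$ has $b=1$, so it would have to equal $(0,1,0)$, which forces $m=1$, a contradiction. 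This step uses the fact that the signature we built is one of the fullerene's three signatures, and every signature of a godseye has one of the listed forms.

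\emph{Tightness.} A tight fullerene has $b=0$ in every signature. But $\star222$ symmetry requires some signature with $b$ odd, so a tight fullerene cannot have $\star222$ symmetry.

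The main obstacle is the godseye bookkeeping. I have to be sure that the godseye list covers all three signatures, so that the form $(0,n,0)$ really is the only godseye form with $b\neq 0$. I also have to confirm that $V=8$ yields only the godseye, which I will check against the enumeration in Proposition~\ref{prop-count}: for $V/4=2$ the total count is $1$.
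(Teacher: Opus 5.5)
Your proposal is correct and takes essentially the same route as the paper. Necessity uses Proposition~\ref{prop-mirrorSymmetryClassify}(c) with $\frac{V}{4}=(s+1)(b+1)$, and the godseye signature $(0,n,0)$ rules out $\frac{V}{4}=2$; sufficiency uses the identical construction $(\frac{V}{8}-1,\,1,\,\frac{V}{8}-1)$; non-tightness follows from $b$ being odd. Your direct observation that $\frac{V}{4}=2$ forces the signature $(0,1,0)$ already settles the $V=8$ case, so the table-based fallback is unnecessary.
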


\begin{proof}
    Assume there is a (3,6)-fullerene that is not a godseye with $V$ vertices that has $\star 222$ symmetry. By Proposition~\ref{prop-mirrorSymmetryClassify}, the (3,6)-fullerene has a signature $(s, b, f)$ such that $b$ is odd and $f \equiv \dfrac{-(b+1)}{2} \pmod {s+1}$. By Equation~\ref{eqn-vCount}, $\frac{V}{4} = (s+1)(b+1)$. Since $b$ is odd, $\frac{V}{4}$ is even, and $b \geq 1$. Since the (3,6)-fullerene is not a godseye, it does not have signature $(0, n, 0)$ for $n \geq 1$. So either it has signature $(0,0,0)$ or else $s \geq 1$. But the $(3,6)$-fullerene with signature $(0,0,0)$ is a tetrahedron with $\star 332$ symmetry, so we have $s \geq 1$. So $\frac{V}{4} = (s+1)(b+1) \geq 4$, i.e., $V \geq 16$. 

    Conversely, suppose that $\frac{V}{4}$ is even and $V \geq 16$. Let $s = \frac{V}{8}-1$ and let $b = 1$. Let $f = s$. Consider the (3,6)-fullerene $(s, 1, s)$. This (3, 6)-fullerene has $V$ vertices, since $4(s+1)(b+1) = 4(\frac{V}{8} - 1 + 1)(1 + 1)= V$.
    Since $V \geq 16$  $s = \frac{V}{8} - 1 \geq 1$. Since $s > 0$ and $b > 0$, this (3,6)-fullerene's signature is not of the form  $(n, 0, 0)$, $(n, 0, n)$ or $(0, n, 0)$ for any $n \geq 1$, so it is not a godseye. Since $f = \dfrac{-(b+1)}{2} \pmod{s+1}$, by Proposition~\ref{prop-mirrorSymmetryClassify}, it has $\star 222$ symmetry. So there exists a (3,6)-fullerene that is not a godseye with $V$ vertices and $\star 222$ symmetry. 

    By Proposition~\ref{prop-mirrorSymmetryClassify}, any (3,6)-fullerene with $\star 222$ symmetry has a signature with $b$ odd. Therefore, it has a signature with $b \neq 0$, so it cannot be tight.

\end{proof}

\begin{lemma}
    For $s_1$ even and $s_1 \geq 4$, the (3,6)-fullerene with signature $(s_1, 0, \dfrac{s_1}{2})$ does not have coinciding signatures.
    \label{lem-unhuh}
\end{lemma}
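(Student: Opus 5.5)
We compute the second signature directly from Equations~\ref{eqn-s2} and \ref{eqn-b2} and show that it differs from the first. Write $s_1 = 2m$ with $m \geq 2$, $b_1 = 0$, and $f_1 = m$. The second spine length is $s_2 = j_2(b_1+1) - 1 = j_2 - 1$, where $j_2$ is the order of $f_1 = m$ in $\mathbf{Z}_{s_1+1} = \mathbf{Z}_{2m+1}$.

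Since $\gcd(m, 2m+1) = \gcd(m,1) = 1$, the element $m$ generates $\mathbf{Z}_{2m+1}$. Its order is therefore $j_2 = 2m+1$, and so $s_2 = 2m = s_1$. Thus $s$ alone does not distinguish the two signatures, and we have to look at $b_2$ and $f_2$. From $h = 2s_1b_1 + 2s_1 + 2b_1 = 4m$ and Equation~\ref{eqn-b2}, we get $b_2 = (4m - 4m)/(4m+2) = 0 = b_1$. Equally, $V/4 = (s+1)(b+1)$ forces $b_2 = 0$ once $s_2 = s_1$.

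The key step is computing $f_2$ via Equation~\ref{eqn-f2}. We need the smallest positive $p_2$ with $p_2 m \equiv b_2 + 1 = 1 \pmod{2m+1}$. Since $2m \equiv -1$, the inverse of $m$ is $-2 \equiv 2m-1$, so $p_2 = 2m-1$. Then
\[
f_2 \equiv -p_2(b_1+1) - (b_2+1) = -(2m-1) - 1 = -2m \equiv 1 \pmod{2m+1}.
\]
So the second signature is $(2m, 0, 1)$. It differs from $(2m, 0, m)$ exactly when $m \not\equiv 1 \pmod{2m+1}$, that is, when $m \neq 1$. This holds because $s_1 \geq 4$.

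Since two of the three signatures are already distinct, the signatures do not all coincide, and the lemma follows. We do not need to compute the third signature, though as a consistency check Equations~\ref{eqn-s3}--\ref{eqn-f3} could be run in the same way.

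The only delicate point is applying the conventions of Equation~\ref{eqn-f2} correctly: $p_2$ must be the least positive solution, and the sign conventions must be followed. That is where I would double-check, for example against a small case such as $s_1 = 4$.
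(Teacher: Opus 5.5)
Your proof is correct and follows essentially the same route as the paper. Both arguments rest on Equation~\ref{eqn-f2} with $b_1 = b_2 = 0$ and $s_2 = s_1$: the paper assumes coincidence and derives $(s_1/2)^2 \equiv 1 \pmod{s_1+1}$, forcing $s_1+1 \mid 3$, whereas you run the same computation forward ($p_2 = s_1 - 1$, $f_2 = 1$) to get the explicit second signature $(s_1, 0, 1)$, and you verify $s_2 = s_1$, $b_2 = 0$ from Equations~\ref{eqn-s2} and \ref{eqn-b2} instead of taking them from the contradiction hypothesis.
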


\begin{proof}
Assume for contradiction that this (3,6)-fullerene does have coinciding signatures. So $(s_2, b_2, f_2) = (s_1, b_1, f_1) = (s_1, 0, \frac{s_1}{2})$. 

From Equation~\ref{eqn-f2}, $f_2 \equiv -p_2(b_1 + 1) - (b_2 + 1) \pmod {s_2 + 1} $, where $p_2$ is the smallest positive integer such that $p_2 \cdot f_1 \equiv b_2 + 1 \pmod {s_1 + 1}$. Substituting $0$ for $b_1$ and $b_2$,  $\dfrac{s_1}{2}$ for $f_2$, and $s_1$ for $s_2$, we have that $\dfrac{s_1}{2} \equiv -p_2 - 1 \pmod {s_1 + 1}$, and $p_2 \cdot \dfrac{s_1}{2} \equiv 1 \pmod {s_1 + 1}$. So $p_2 \equiv -\dfrac{s_1}{2} - 1 \equiv \dfrac{s_1}{2} \pmod {s_1 + 1} $, and $\dfrac{s_1}{2}\cdot \dfrac{s_1}{2} \equiv 1 \pmod {s_1 + 1}$. Setting $y = s_1+1$, we have $y \mid (\frac{y-1}{2})^2 - 1$, so $y \mid  \frac{(y-1)^2 - 4}{4} $ so $y  \mid y^2 - 2y -3$. Therefore, $y \mid 3$ so $y = 3$ or $y = 1$. So  $s_1 = 2$ or $s_1 = 0$, which contradicts the assumption that $s_1$ is an even number $\geq 4$. 
\end{proof}

Part (ii) of Conjecture~5.7 of \cite{deza2005zigzag} can be restated as:

\begin{proposition}

There is a (3,6)-fullerene that is not a godseye with $V$ vertices and $2\star 2$ symmetry if and only if either $\frac{V}{4}$ is odd and $V \geq 20$ or $\frac{V}{8}$ is even and $V \geq 32$. If $\frac{V}{4}$ is odd and $V \geq 20$, then there is a unique tight (3, 6)-fullerene with $2\star 2$ symmetry and $V$ vertices and it has four hexagons which are each adjacent
to two triangles on non-opposite and non-adjacent edges.

%Original version: 
%    $3_n(D_{2d})$ exists if and only if either $\dfrac{n}{4}$ is odd and $n \geq 20$ or $\dfrac{n}{8}$ is even and $n \geq 24$. The graph defined in Theorem 5.1(iii) is a unique $3_n(D_{2d})$ tight graph if $\dfrac{n}{4}$ is odd and $n \geq 20$.
\end{proposition}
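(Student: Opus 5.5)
The plan is to reduce everything to the signature criterion of Proposition~\ref{prop-mirrorSymmetryClassify}(b). A $(3,6)$-fullerene has $2\star 2$ symmetry exactly when it lacks coinciding signatures and has a signature $(s,b,\frac{s-b}{2})$ with $s\equiv b \pmod 2$. By Equation~\ref{eqn-vCount}, $\frac{V}{4}=(s+1)(b+1)$.

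For necessity I split on parity. If $s,b$ are both even, then $\frac{V}{4}$ is odd. If both are odd, then $s+1$ and $b+1$ are even, so $4\mid \frac{V}{4}$, i.e.\ $\frac{V}{8}$ is even. For the lower bounds I check the small cases directly.
\begin{itemize}
\item $\frac{V}{4}=1$ gives only $(0,0,0)$, the tetrahedron, which is $\star 332$.
\item $\frac{V}{4}=3$ gives $(2,0,1)$ and $(0,2,0)$. The latter is a godseye. The former has coinciding signatures: substituting into Equations~\ref{eqn-s2}--\ref{eqn-f3} returns $(2,0,1)$ three times, which matches the $\star 332$ entry of Table~\ref{table-countBySymmType}. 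Hence it is $\star 332$.
\item $\frac{V}{4}=4$ with $s,b$ odd forces $(1,1,0)$, which is the unique $\star 332$ fullerene at that size. Proposition~\ref{prop-count}(b) gives count $(x-1)=1$ here, and part (a) gives $1$ as well.
\end{itemize}
So every non-godseye $2\star 2$ example has $V\ge 20$ in the odd case and $V\ge 32$ in the $\frac{V}{8}$ even case.

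For sufficiency with $\frac{V}{4}$ odd and $V\ge 20$, I take $s=\frac{V}{4}-1\ge 4$, which is even, and the signature $(s,0,\frac{s}{2})$. Lemma~\ref{lem-unhuh} says it has no coinciding signatures, so Proposition~\ref{prop-mirrorSymmetryClassify}(b) gives $2\star 2$. It is not a godseye, because $\frac{s}{2}\not\equiv 0, s \pmod{s+1}$ and $s\neq 0$.

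For $\frac{V}{8}$ even and $V\ge 32$, I would avoid computing signatures and count instead. Write $\frac{V}{4}=2^x3^y\prod p_i^{k_i}\prod q_j^{\ell_j}$ with $x\ge 2$. By Proposition~\ref{prop-count}(b), the number with $\star 332$ or $2\star 2$ symmetry, all of whose self-mirror signatures have $s,b$ odd, is $(x-1)(y+1)\prod(k_i+1)\prod(\ell_j+1)$. By part (a) at most one of these is $\star 332$. This product equals $1$ only when $x=2$ and there are no other prime factors, i.e.\ $\frac{V}{4}=4$, which is excluded. So at least one $2\star 2$ fullerene exists. Godseyes have signatures with $b=0$ or $s=0$, hence never a self-mirror signature with $s,b$ both odd, so the example found is not a godseye.

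For the tight statement, a tight fullerene has $b=0$ in every signature. Its $2\star 2$ self-mirror signature must therefore be $(s,0,\frac{s}{2})$ with $s=\frac{V}{4}-1$. This pins it down uniquely, and existence was shown above.

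The remaining and, I expect, hardest step is the geometric description. With $b=0$, consecutive spine columns are adjacent, and the special hexagons in neighbouring columns are offset by half a period. I would draw the quotient: each triangle downstairs sits at the end of a spine, and the hexagons adjacent to two triangles are the hexagons flanking special hexagons across adjacent spine columns. I would then check from the half-period offset, using the mirror structure of Figure~\ref{fig-421}, that each such hexagon meets the two triangles along edges separated by exactly one edge. I would also count that there are exactly four such hexagons, two per pair of triangles by the $2\star 2$ symmetry. The case $s\ge 4$ should be needed here so that these hexagons are distinct and the two triangles are not on opposite edges.
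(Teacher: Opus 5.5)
Your necessity argument (checking $\frac{V}{4}\in\{1,3,4\}$ directly) and your construction $(s,0,\frac{s}{2})$ for $\frac{V}{4}$ odd agree with the paper.

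For $\frac{V}{8}$ even you take a different route.
\begin{itemize}
\item The paper exhibits the explicit signature $(1,\frac{V}{8}-1,f_1)$, with $f_1\in\{0,1\}$ of parity opposite to $\frac{V}{16}$. It excludes coinciding signatures by computing $s_2\neq 1$ from Equation~\ref{eqn-s2}.
\item You read off from Proposition~\ref{prop-count}(b) that there are $(x-1)(y+1)\prod(k_i+1)\prod(\ell_j+1)\ge 2$ fullerenes of type $\star 332$ or $2\star 2$ once $x\ge 2$ and $\frac{V}{4}\neq 4$. At most one of these is $\star 332$, and godseyes have no signature with $s,b$ both odd.
\end{itemize}
Your argument is correct and short given Section~\ref{sec-countsBySymmType}. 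However, it is non-constructive and rests on the whole enumeration, in particular on the uniqueness of the self-mirror signature from \cite{green2025enumerate}. The paper's argument instead produces a concrete example from one application of Equation~\ref{eqn-s2}.

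The genuine gap is in the tight statement. You say existence ``was shown above,'' but what was shown is only that $(s,0,\frac{s}{2})$ has $2\star 2$ symmetry. Tight means $b=0$ in \emph{all three} equivalent signatures. One signature with $b=0$ does not guarantee this: the godseye $(n,0,0)$ also has signature $(0,n,0)$. You need to check the other two signatures:
\begin{itemize}
\item Since $\gcd(\frac{s}{2},s+1)=\gcd(\frac{s}{2}+1,s+1)=1$, the orders $j_2$ and $j_3$ in Equations~\ref{eqn-s2} and \ref{eqn-s3} are both $s+1$. Hence $s_2=s_3=s$.
\item With $h=2s$, Equations~\ref{eqn-b2} and \ref{eqn-b3} then give $b_2=b_3=0$.
\end{itemize}
The paper obtains the same conclusion from Proposition~8 of \cite{green2024polyhedra}, using that $f$, $f+1$, $s+1$ are pairwise coprime. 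Your uniqueness argument is fine.

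On the four hexagons, the paper itself only asserts the description with reference to Figure~\ref{fig:2star2Tight}, so your sketch is at the same level. However, the role you give to $s\ge 4$ is off. Suppose the special hexagons of a column sit at heights $\equiv 0 \pmod{s+1}$. Then those in the two neighbouring columns sit at heights $\equiv \frac{s+1}{2}$. The relevant hexagons are those at heights $\frac{s}{2}$ and $\frac{s}{2}+1$. They touch the two horizontally aligned special hexagons along their upper-left and upper-right (resp.\ lower-left and lower-right) edges. Contact along opposite edges is impossible for every even $s\ge 2$. What $s\ge 4$ rules out is these hexagons also touching a special hexagon in their own column. That is exactly what happens for $s=2$, the truncated tetrahedron, where each hexagon meets three triangles.
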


\begin{proof}

Suppose that there is a (3,6)-fullerene that is not a godseye with $V$ vertices and $2\star2$ symmetry.  By Proposition~\ref{prop-mirrorSymmetryClassify}, the (3,6)-fullerene does not have coinciding signatures and it does have a signature $(s, b, f)$ such that $s$ and $b$ have the same parity and such that $f \equiv \dfrac{s - b}{2} \pmod {s+1}$. Recall that $\frac{V}{4} = (s+1)(b+1)$ by Equation~\ref{eqn-vCount}.

Case 1: $s$ and $b$ are both even: Then $s+1$ and $b+1$ are both odd, so $\frac{V}{4} = (s+1)(b+1)$  is odd.

We will show that $V \geq 20$ by the following argument. Since the (3,6)-fullerene is not a godseye, it does not have signature $(0, n, 0)$ for any $n \geq 1$. So either it has signature $(0,0,0)$ or else $s \geq 1$. But the $(3,6)$-fullerene with signature $(0,0,0)$ is a tetrahedron with $\star 332$ symmetry so $s \geq 1$. Since $s$ is even, $s \geq 2$. If $b \geq 2$, then $\frac{V}{4} = (s+1)(b+1) \geq 9$ and $V \geq 20$ as claimed. If $b = 0$, then if $s \geq 4$, $\frac{V}{4} \geq 5$ so $V \geq 20$ also. It remains to consider the case when $b = 0$ and $s = 2$. In this case, the signature is $(2, 0, 1)$, which has coinciding signatures, a contradiction. Therefore, $V \geq 20$ for all allowable values of $s$ and $b$. 

Case 2: $s$ and $b$ are both odd: Then $s+1$ and $b+1$ are both even, so $\frac{V}{4} = (s+1)(b+1)$  is divisible by 4, and $\frac{V}{8}$ is even. Since $s$ and $b$ are both odd, $s \geq 1$ and $b \geq 1$. But $s = 1$ and $b = 1$ is impossible, since then the signature would be $(1, 1, 0)$, which is a coinciding signature. So in fact, either $s \geq 3$ or $b \geq 3$, so $\frac{V}{4} = (s+1)(b+1) \geq 8$ and $V \geq 32$ as wanted. 

Conversely, suppose that either $\frac{V}{4}$ is odd and $V \geq 20$ or $\frac{V}{8}$ is even and $V \geq 32$. 

Case 1: Suppose $\frac{V}{4}$ is odd and $V \geq 20$. 

 Consider the (3,6)-fullerene with signature $(s_1, b_1, f_1)$ where $s_1 = \frac{V}{4} - 1$, $b_1 = 0$, and  $f_1 =  \dfrac{s_1}{2}$. Note that this is a (3,6)-fullerene with $V$ vertices, since $4(s_1+1)(b_1+1) = 4(\frac{V}{4} -1 + 1)(0+1) = V$.   It is not a godseye, since its signature is not of the form $(n, 0, 0)$, $(n, 0, n)$ or $(0, n, 0)$ for any $n \geq 1$.

 Since $V \geq 20$ implies that $s_1 = \frac{V}{4} -1 \geq 4$, and $s_1$ is even,  Lemma~\ref{lem-unhuh} shows that the (3,6)-fullerene does not have coinciding signatures. In addition, $s_1$ and $b_1$ are both even and $f = \dfrac{s_1-b_1}{2}$, so by Lemma~\ref{lem-2star2}, the (3,6)-fullerene must have $2 \star 2$ symmetry. We have found a (3,6)-fullerene with $V$ vertices that is not a godseye, with the symmetry type $2 \star 2$.

Case 2: $\frac{V}{8}$ is even and $V \geq 32$. Since $\frac{V}{8}$ is even, $\frac{V}{16}$ is an integer. 

Consider the (3,6)-fullerene with signature $(1, \frac{V}{8} -1, f_1)$, where $f_1 = 1$ if $\frac{V}{16}$ is even and $f_1 = 0$ if $\frac{V}{16}$ is odd. 
Note that this (3,6)-fullerene has $V$ vertices, since $4(s_1 + 1)(b_1 + 1) = 4\cdot 2 \cdot \frac{V}{8} = V$. It is not a godseye, since its signature is not of the form $(n, 0, 0)$, $(n, 0, n)$ or $(0, n, 0)$ for any $n \geq 1$.
In addition, $s_1$ and $b_1$ are both odd. Since $f_1$ and $\frac{V}{16}$ have opposite parity, $f_1  \equiv 1 - \frac{V}{16} \pmod {2}$. Since $\dfrac{s_1 - b_1}{2}  = 1 - \frac{V}{16}$, we have that $f_1  \equiv \dfrac{s_1 - b_1}{2} \pmod {2}$, which means that $f_1 \equiv \dfrac{s_1 - b_1}{2} \pmod {s_1 + 1}$, since $s_1 = 1$. So if this (3,6)-fullerene doesn't have coinciding signatures, by Lemma~\ref{lem-2star2}, it will have $2\star 2$ symmetry.

To verify that it doesn't have coinciding signatures, we will show that $s_2 \neq s_1$. Consider $j_2$, the smallest positive integer such that $j_2 \cdot f_1 \equiv 0 \pmod {s_1 + 1}$, where $s_1 + 1 = 2$. If $\frac{V}{16}$ is even, then $f_1 = 1$ by construction, so $j_2 = 2$, and by Equation~\ref{eqn-s2}, $s_2 = j_2(b_1 + 1) - 1 = 2 (\frac{V}{8}) - 1 = \frac{V}{4} - 1$. Since $V \geq 32$,  $\frac{V}{4} - 1 > 1$, so $s_2 \neq s_1$.
If, instead, $\frac{V}{16}$ is odd, then $f_1 = 0$ by construction, so $j_2$ is the smallest positive integer such that $j_2 \cdot 0 \equiv 0 \pmod 2$. Therefore, $j_2 = 1$ and $s_2 = j_2(b_1 + 1) - 1 = 1 (\frac{V}{8}) - 1 = \frac{V}{8} - 1$. Again, $V \geq 32$ ensures that $s_2 > 1$, so $s_2 \neq s_1$. The (3,6)-fullerene does not have coinciding signatures, so it has symmetry type $2 \star 2$.

Finally, we construct tight (3,6)-fullerenes with $2 \star 2$ symmetry. 
If $\frac{V}{4}$ is odd and $V \geq 20$, then consider the (3,6)-fullerene $(s, b, f)$, where $s = \frac{V}{4} - 1$, $b = 0$, and $f = \frac{s}{2} = \frac{V}{8} - \frac{1}{2}$. We saw in Case 1 above that this (3,6)-fullerene has $2 \star 2$ symmetry. This (3,6)-fullerene has $b = 0$. Since $f = \dfrac{s}{2}$ and $f+ 1 = \dfrac{s + 2}{2}$, the three numbers $f$, $f+1$, and $s+1$ are pairwise relatively prime. Therefore, by Proposition 8 of \cite{green2024polyhedra}, it is tight. Any other tight (3,6)-fullerene with $V$ vertices and $2 \star 2$ symmetry must have $b = 0$ for all signatures, so it must have $\frac{V}{4} = (s+1)(b+1) = s+1$ for all signatures. So by Proposition~\ref{prop-mirrorSymmetryClassify}, it must have a signature of  the form $(\frac{V}{4} - 1, 0, \frac{V}{8} - \frac{1}{2})$. Therefore, there is a unique tight (3,6)-fullerene with $2 \star 2$ symmetry when $\frac{V}{4}$ is odd and $V \geq 20$. See Figure~\ref{fig:2star2Tight} for an example. Because these (3,6)-fullerenes have $2 \star 2$ symmetry with no belts, and with spines of length $\geq 4$, they will each have four hexagons which are each adjacent to two triangles on non-opposite and non-adjacent edges, as described in \cite{deza2005zigzag}, Theorem 5.1 (iii). These hexagons are shaded in light blue in Figure~\ref{fig:2star2Tight}.

\end{proof}

\begin{figure}[ht]
    \centering
    \includegraphics[width=0.7\linewidth]{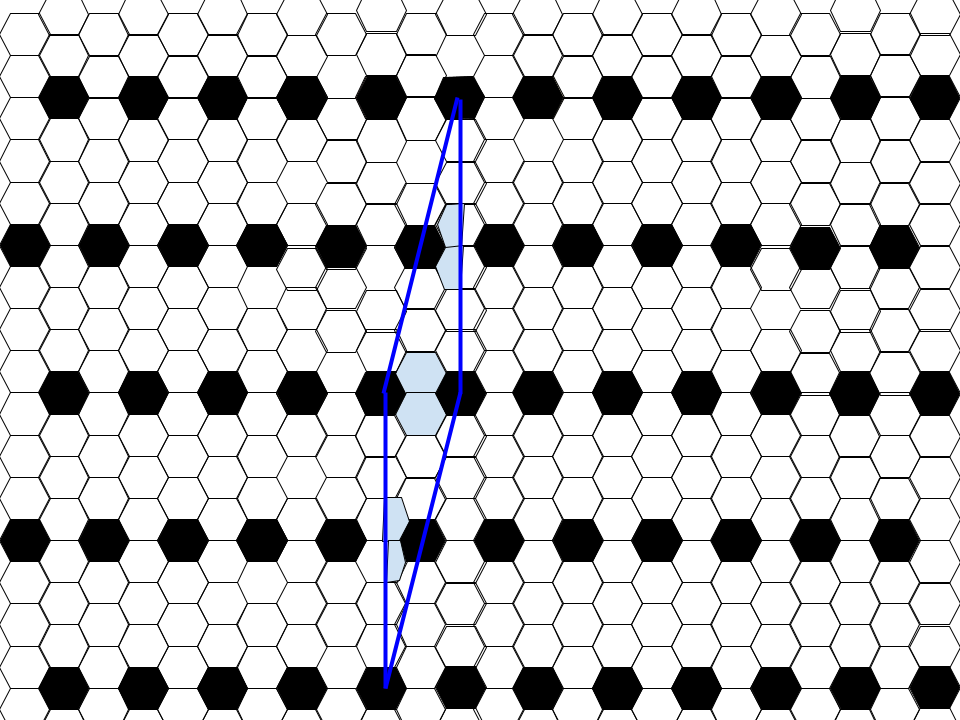}
    \caption{Tight (3,6)-fullerene $(6, 0, 3)$ with $2\star2$ symmetry, represented by its hexagonal tiling cover. The  parallelogram surrounds a fundamental domain. Hexagons shaded in blue are each adjacent to two special hexagons on non-opposite and non-adjacent edges, and therefore to two triangles downstairs in the $(3,6)$-fullerene.}
    \label{fig:2star2Tight}
\end{figure}

The following proposition gives conditions for the existence of (3,6)-fullerenes with $\star332$ symmetry: 

\begin{proposition}
There is a (3,6)-fullerene that is not a godseye with $V$ vertices and $\star 332$  symmetry if and only if $\frac{V}{4}$ is either a perfect square or 3 times a perfect square. 
\end{proposition}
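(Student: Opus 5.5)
The plan is to derive this directly from the exact count in Proposition~\ref{prop-count}(a) and then check that godseyes never contribute to it. Proposition~\ref{prop-count}(a), which is a restatement of Proposition~7.2 of \cite{green2025enumerate}, says the following. Among all $(3,6)$-fullerenes with $V$ vertices, godseyes included, the number with $\star 332$ symmetry is $1$ exactly when all $k_i$, all $\ell_j$, and $x$ are even, and is $0$ otherwise. That condition is the same as saying $\frac{V}{4}$ is a perfect square or $3$ times a perfect square, since the exponent $y$ of $3$ may have either parity. So the biconditional holds at once if ``not a godseye'' is dropped. The remaining work is to show that removing godseyes changes none of these counts.

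The key step is to show that no godseye has $\star 332$ symmetry. From \cite{green2024polyhedra}, a godseye has the three signatures $(n,0,0)$, $(n,0,n)$ and $(0,n,0)$ for some $n \geq 1$. Because $n \geq 1$, the signatures $(n,0,0)$ and $(0,n,0)$ are different, so a godseye never has coinciding signatures. Proposition~\ref{prop-mirrorSymmetryClassify}(a) says that $\star 332$ symmetry requires coinciding signatures. Equivalently, $\star 332$ symmetry implies 3-fold rotational symmetry, which gives coinciding signatures by Proposition~3.2 of \cite{green2025enumerate}. Either way, no godseye has $\star 332$ symmetry.

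Putting these together, for every $V$ the number of non-godseye $(3,6)$-fullerenes with $V$ vertices and $\star 332$ symmetry equals the number in Proposition~\ref{prop-count}(a). That number is positive, and in fact equal to $1$, exactly when $\frac{V}{4}$ is a perfect square or $3$ times a perfect square. As a sanity check on small cases: $V=4$ gives the tetrahedron $(0,0,0)$, which is not of godseye form because godseyes need $n\ge 1$. $V=12$ gives the signature $(2,0,1)$, which has coinciding signatures.

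I do not expect a real obstacle here, since Proposition~\ref{prop-count}(a) does all the counting. The only point needing care is matching the parity conditions on the exponents to the phrase ``a perfect square or 3 times a perfect square.'' If needed, one could instead give a direct construction: take $(m-1,m-1,0)$ for $\frac{V}{4}=m^2$, and a suitable tight signature for $\frac{V}{4}=3m^2$. This would require checking coincidence of signatures with Equations~(\ref{eqn-s2})--(\ref{eqn-f3}), which is exactly the work that citing Proposition~\ref{prop-count}(a) avoids.
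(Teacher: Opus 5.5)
Your proposal is correct and follows essentially the same route as the paper: both arguments rest on Proposition~7.2 of \cite{green2025enumerate}. The paper uses its signature form ($(m-1,m-1,0)$ or $(3m-1,m-1,m)$) and checks that these signatures are not of godseye form, while you use its count form in Proposition~\ref{prop-count}(a) and instead observe that godseye signatures never coincide, which is the same argument the paper itself uses for the $332$ case.
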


\begin{proof}
    By Proposition 7.2 of \cite{green2025enumerate}, 
    a (3,6)-fullerene has 3-fold rotational symmetry and mirror symmetry if and only if its signature is either of the form $(m-1, m-1, 0)$ or else of the form $(3m - 1, m- 1, m)$ for some $m \geq 1$. Since $\frac{V}{4} = (s+1)(b+1)$ by Equation~\ref{eqn-vCount}, there is a (3,6)-fullerene with $V$ vertices that has $\star 332$ symmetry if and only if $\frac{V}{4} = m^2$ or $\frac{V}{4} = 3m^2$. None of these (3,6)-fullerenes are godseyes since these signatures are not of the form $(n, 0, 0)$, $(n, 0, n)$, or $(0, n, 0)$ for $n \geq 1$.
\end{proof}

Next, we give conditions for the existence of (3,6)-fullerenes with $332$ symmetry. 
\begin{proposition}
There is a (3,6)-fullerene that is not a godseye with $V$ vertices  that has $332$  symmetry if and only if, in the prime factorization of $\frac{V}{4}$, all prime factors congruent to $2 \pmod 3$ have even exponents and there is at least one prime factor congruent to $1 \pmod 3$. 
\end{proposition}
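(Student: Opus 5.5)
The plan is to read the statement off Proposition~\ref{prop-count}(e). Then I will check that godseyes never have $332$ symmetry, so excluding them changes nothing.

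Write $\frac{V}{4} = 2^x 3^y \prod p_i^{k_i}\prod q_j^{\ell_j}$ as in Proposition~\ref{prop-count}. I will treat the prime $2$ together with the $q_j$, since $2 \equiv 2 \pmod 3$. The condition ``all prime factors congruent to $2 \pmod 3$ have even exponents'' then becomes ``$x$ is even and all $\ell_j$ are even.'' Under this condition, Proposition~\ref{prop-count}(e) gives two cases.
\begin{itemize}
\item If at least one $k_i$ is odd, the number of $(3,6)$-fullerenes with $332$ symmetry is $\prod_{i}(k_i+1) \geq 2 > 0$. In this case at least one $p_i$ is present.
\item If all $k_i$ are even, the number is $\prod_i (k_i+1) - 1$. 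This is positive exactly when $z \geq 1$, because each factor $k_i + 1 \geq 3$ when $k_i \geq 2$. When $z=0$ the product is empty and the count is $0$.
\end{itemize}
If $x$ is odd or some $\ell_j$ is odd, the count is $0$. Combining these cases, at least one $(3,6)$-fullerene with $V$ vertices and $332$ symmetry exists exactly when every prime $\equiv 2 \pmod 3$ has even exponent and $z \geq 1$. This is the stated condition, provided godseyes can be ignored.

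It remains to show that no godseye has $332$ symmetry. By Proposition~\ref{prop-332}, $332$ symmetry requires 3-fold rotational symmetry, which means coinciding signatures. A godseye has signatures $(n,0,0)$, $(n,0,n)$, and $(0,n,0)$ with $n \geq 1$. Since $(n,0,0) \neq (0,n,0)$, these do not coincide, so a godseye has no 3-fold symmetry. Therefore the counts in Proposition~\ref{prop-count}(e), which include godseyes, already count only non-godseyes, and the equivalence follows.

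The only delicate step is the bookkeeping. I must make sure the prime $2$ is included among the primes $\equiv 2 \pmod 3$, and that the all-$k_i$-even case with $z=0$ really gives a count of $0$. That case is $\frac{V}{4}$ a square or $3$ times a square with no $p_i$, where the only 3-fold symmetric fullerene is the $\star 332$ one. I expect no further obstacle.
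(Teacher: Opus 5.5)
Your proposal is correct and takes the same route as the paper. You read off positivity from the three cases of Proposition~\ref{prop-count}(e), with $2$ counted among the primes $\equiv 2 \pmod 3$ and the all-$k_i$-even case with $z=0$ giving $0$, and you note that a godseye's signatures $(n,0,0)$, $(n,0,n)$, $(0,n,0)$ do not coincide, so godseyes never have $332$ symmetry and excluding them changes nothing.
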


\begin{proof}
Note that godseyes do not have 3-fold rotational symmetry, since their three signatures $(n, 0, 0)$, $(n, 0, n)$ and $(0, n, 0)$ for $n \geq 1$ are not identical. So the condition that the (3,6)-fullerene is not a godseye, which is included to agree with the restriction to convex polyhedra in \cite{deza2005zigzag}, is not actually needed.

Suppose $\frac{V}{4}$ factors as $ \displaystyle  2^x 3^y  \prod_{i = 1}^z p_i^{k_i} \prod_{j = 1}^{w} q_j^{\ell_j}$,  where $x$, $y$, $z$, and $w$ are all $\geq 0$, the $p_i$ and $q_j$ are primes, $p_i \equiv 1 \pmod 3$ $\forall i$, $q_j \equiv 2 \pmod 3$ and $q_j > 2$ $\forall j$. By Proposition~\ref{prop-count}, the number of (3,6)-fullerenes with $V$ vertices and $332$ symmetry is equal to 
\begin{enumerate}
\item [(a)] $\displaystyle \prod_{i = 1}^z (k_i + 1) - 1$ if all $k_i$ are even, all $\ell_j$ are even, and $x$ is even
\item[(b)] $\displaystyle \prod_{i = 1}^z (k_i + 1)$ if all $\ell_j$ are even and $x$ is even, but at least one $k_i$ is odd
\item[(c)] 0 if $x$ is odd or any $\ell_j$ is odd
\end{enumerate}
These numbers can equal 0 only in case (c), when there is a prime congruent to 2 mod 3 with an odd exponent, or in case (a) if all $k_i$ are zero. Therefore, the numbers are greater than zero in all other cases, namely when all  prime factors congruent to 2 mod 3 have even exponents and there is at least one prime factor congruent to 1 mod 3. 
\end{proof}

Let $\tau(V)$ represent the number of (3,6)-fullerenes with $V$ vertices with 222 symmetry. It is straighforward to give a formula for $\tau(V)$ in terms of previously discussed quantities.

\begin{proposition} The number of (3,6)-fullerenes with $V$ vertices with 222 symmetry is given by the formula
$$\tau(V) =  \frac{1}{3} \sigma(V) - \frac{1}{3} \delta(V) - \mu(V) + \nu(V),$$ 
where 
\begin{itemize}
    \item $\sigma(V)$ is the number of signatures of (3,6)-fullerenes with $V$ vertices, 
    \item $\delta(V)$ is the number of (3,6)-fullerenes with $V$ vertices with 3-fold rotational symmetry, i.e. $\star 332$ or $332$ symmetry
    \item $\mu(V)$ is the number of (3,6)-fullerenes with $V$ vertices with mirror symmetry, i.e. $\star 332$, $2\star 2$, or $\star 222$ symmetry,
    \item $\nu(V)$ is the number of (3,6)-fullerenes with $V$ vertices with both 3-fold rotational symmetry and mirror symmetry, i.e., with $\star 332$ symmetry. 
    \end{itemize}
\label{prop-eta}
\end{proposition}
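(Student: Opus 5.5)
The plan is a counting argument that relates the number of signatures to the number of fullerenes. By Proposition~\ref{prop-listSymmTypes}, every $(3,6)$-fullerene has exactly one of the five symmetry types. Hence the $222$ count is the total count $T(V)$ minus the fullerenes with 3-fold rotational symmetry or mirror symmetry. By inclusion--exclusion this gives
\[
\tau(V) = T(V) - \delta(V) - \mu(V) + \nu(V).
\]
Here $\nu(V)$ counts exactly the fullerenes with both properties, namely the $\star 332$ ones, by Proposition~\ref{prop-mirrorSymmetryClassify}(a) and Proposition~\ref{prop-332}. So it remains to show that
\[
T(V) = \tfrac{1}{3}\sigma(V) + \tfrac{2}{3}\delta(V).
\]
Substituting this into the display above gives the stated formula.

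To count $T(V)$, I will consider the map that sends each signature with $V$ vertices to the fullerene it represents. Each fullerene has three signatures, one for each of the orientations $0^\circ$, $60^\circ$ and $120^\circ$, and Equation~\ref{eqn-vCount} shows that all three give the same $V$. I will split into two cases.

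\textbf{Fullerenes without 3-fold rotational symmetry.} Here I must show that the three signatures are pairwise distinct, so such a fullerene accounts for exactly $3$ elements of $\sigma(V)$. The key point is that if two of the three signatures coincide, then all three do, and the fullerene has 3-fold rotational symmetry. Equal signatures for the $0^\circ$ and $60^\circ$ orientations mean the special-hexagon lattice upstairs is invariant, up to translation, under a $60^\circ$ rotation. That rotation composed with itself is a $120^\circ$ rotation, which then forces the third signature to equal the first two. I would justify this via the coinciding-signature characterization in Proposition~3.2 of \cite{green2025enumerate}. This is the step I expect to be the main obstacle: I need that a $60^\circ$ (or $120^\circ$) rotation preserving special hexagons, possibly about some point other than a special hexagon center, is equivalent to the fullerene having 3-fold symmetry. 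I would first try to derive it directly from Equations~\ref{eqn-s2}--\ref{eqn-f3} together with the uniqueness of the signature for a fixed orientation.

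\textbf{Fullerenes with 3-fold rotational symmetry.} Here all three signatures coincide, so such a fullerene accounts for exactly $1$ element of $\sigma(V)$. I also need that distinct fullerenes never share a signature, which holds because a signature determines the fullerene \cite{green2024polyhedra}.

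Combining the two cases gives
\[
\sigma(V) = 3\bigl(T(V) - \delta(V)\bigr) + \delta(V),
\]
so $T(V) = \tfrac{1}{3}\sigma(V) + \tfrac{2}{3}\delta(V)$. Substituting completes the proof. As a consistency check, I would compare the result with Proposition~\ref{prop-count}(f) and with a few rows of Table~\ref{table-countBySymmType}.
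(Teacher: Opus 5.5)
Your proof is correct and follows the paper's argument: the paper also writes $\tau(V)=\alpha(V)-\delta(V)-\mu(V)+\nu(V)$ using Proposition~\ref{prop-222} and then substitutes $\alpha(V)=\frac{1}{3}\sigma(V)+\frac{2}{3}\delta(V)$; the only difference is that it cites this identity as Theorem~5.2 of \cite{green2025enumerate} rather than re-deriving it, as you do, by counting signatures per fullerene. The step you flag as the main obstacle is not actually a problem: your squaring argument already shows that two equal signatures force all three to coincide, and Proposition~3.2 of \cite{green2025enumerate} says coinciding signatures are equivalent to 3-fold rotational symmetry (and in any case a $60^\circ$ rotation preserving the lattice coset $Z$ is automatically centered at a point of $Z$).
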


\begin{proof}
Let $\alpha(V)$ be the number of (3,6)-fullerenes with V vertices. 
From Proposition~\ref{prop-222}, all (3,6)-fullerenes that do not have 3-fold rotational symmetry or mirror symmetry must have 222 symmetry. So  $\tau(V) = \alpha(V) - \delta(V) - \mu(V) + \nu(V)$. 

From Theorem 5.2 of  \cite{green2025enumerate}, $\alpha(V) = \frac{1}{3} \sigma(V) + \frac{2}{3} \delta(V)$,  so $\tau(V) = \frac{1}{3} \sigma(V) + \frac{2}{3} \delta(V) - \delta(V) - \mu(V) + \nu(V) = \frac{1}{3} \sigma(V) - \frac{1}{3} \delta(V) - \mu(V) + \nu(V)$.

 \end{proof}

 Explicit formulas for $\nu(V)$ and $\mu(V)$, in terms of the prime factorization of $\frac{V}{4}$, are given in parts (a) and (d) of Proposition~\ref{prop-count}. An explicit formula for $\delta(V)$ is given by the sum of the formulas of parts (a) and (e) of Proposition~\ref{prop-count}. % Theorem 5.1, Proposition 7.3, and Proposition 7.2 of \cite{green2025enumerate}. 
 An explicit formula for $\sigma(V)$ is given in Equation 9 of \cite{green2025enumerate}. For completeness, this formula is copied below.  
 
 Suppose $\frac{V}{4}$ factors  as $\displaystyle \prod_{i = 1}^z p_i^{k_i}$, where the $p_i$ are distinct primes.
 \begin{equation}
    \displaystyle \sigma(V) = \prod_{i = 1}^z \dfrac{p_i^{k_i + 1} - 1}{p_i - 1}
    \label{eqn-sigma}
\end{equation}

The formulas for $\sigma(V)$, $\delta(V)$, $\mu(V)$, and $\nu(V)$ can be combined with the formula for $\tau(V)$ in Proposition~\ref{prop-eta} to compute the number of $(3,6)$-fullerenes with 222 symmetry.

Next, we will prove the first statement of  Conjecture 5.7 (iii) from \cite{deza2005zigzag}, which can be stated as: 

\begin{proposition}
There is a (3,6)-fullerene that is not a godseye with $V$ vertices and $222$ symmetry if and only if $V \geq 24$ and $V \neq 28, 32$. 

%Original statement:
%    $3_n(D_2)$ exists if and only if  $n \geq 24$ and $n \neq 28, 32$. 
\end{proposition}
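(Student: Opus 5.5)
The plan is to reduce existence to positivity of $\tau(V)$ and then show $\tau(V)>0$ exactly when $V\ge 24$ and $V\neq 28,32$. Write $n=\frac{V}{4}$.

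\textbf{Step 1: godseyes never have $222$ symmetry.} A godseye has the signature $(0,n,0)$. Since $s+1=1$, every congruence modulo $s+1$ holds trivially, so this signature is self-mirror. By Corollary~6.3 of \cite{green2025enumerate}, the godseye then has mirror symmetry. By Proposition~\ref{prop-mirrorSymmetryClassify} its type is therefore $\star 222$, $2\star 2$ or $\star 332$, never $222$. Hence a non-godseye $(3,6)$-fullerene with $V$ vertices and $222$ symmetry exists if and only if $\tau(V)>0$. Here $\tau(V)$ is computed by Proposition~\ref{prop-eta}, or equivalently by subtracting parts (d) and (e) from part (f) of Proposition~\ref{prop-count}.

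\textbf{Step 2: the necessity direction and small cases.} For this direction I read off the $222$ column of Table~\ref{table-countBySymmType}. It is $0$ for $n=1,2,3,4,5,7,8$, that is, for $V\le 20$ and for $V=28,32$. The same column is positive for every $n$ with $6\le n\le 100$ other than $7$ and $8$. This settles the statement for $V\le 400$, provided the table entries are accepted as computed from the formulas of Proposition~\ref{prop-count}. If needed I would recheck the few smallest entries by hand, for example $n=6$: $\sigma=12$, $\delta=0$, $\mu=2$, $\nu=0$, giving $\tau=4-2=2$.

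\textbf{Step 3: a general lower bound for large $n$.} Let $d(n)$ be the number of divisors of $n$. From Proposition~\ref{prop-count}(d) I get $\mu(V)\le (2x-1)(y+1)\prod(k_i+1)\prod(\ell_j+1)\le 2d(n)$. From parts (a) and (e), $\delta(V)\le \prod(k_i+1)\le d(n)$. I also use $\nu\ge 0$ and $\sigma(V)\ge n+1$, the latter from Equation~\eqref{eqn-sigma}, since $\sigma$ is the divisor-sum of $n$. Proposition~\ref{prop-eta} then gives
\[
\tau(V)\ \ge\ \tfrac13\bigl(n+1-d(n)\bigr)-2d(n),
\]
which is positive whenever $n+1>7d(n)$. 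The elementary bound $d(n)\le 2\sqrt n$ makes this hold for all $n>196$.

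\textbf{Step 4: the intermediate range $100<n\le 196$ (main obstacle).} This is the one genuinely fiddly part. I would first try refining the crude bounds. Use the true value $\sigma(n)$, which is large precisely when $d(n)$ is large, since $\sigma(n)\ge n+$ (sum of the small divisors). Use the sharper estimates $\mu\le (2x-1)d(n)/(x+1)$ and $\delta\le d(n)/\bigl((x+1)(y+1)\bigr)$. And use the fact that $d(n)\le 18$ on this range, which already settles every $n$ with $d(n)\le 14$. The remaining handful of highly composite $n$, such as $120,144,168,180$, I would check directly from Proposition~\ref{prop-count}. If that still leaves gaps, I would fall back on a finite computation of $\tau$ from the explicit formulas for $n\le 196$.
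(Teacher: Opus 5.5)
Your proof is correct, but for the existence direction it takes a different route from the paper.

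The paper argues constructively. It shows that the tight fullerene with signature $(s,0,2)$ can have 3-fold rotational or mirror symmetry only if $s+1\in\{1,2,3,4,5,7,8\}$, by working through the signature-transformation equations and the self-mirror criterion. So by Proposition~\ref{prop-222} it has $222$ symmetry for every other $V=4(s+1)$. Table~\ref{table-countBySymmType} is used only for nonexistence, as in your Step~2.

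You instead carry out the counting argument that the paper mentions and then sets aside, proving $\tau(V)>0$ from the closed formulas. Your estimate $\tau(V)\ge\frac13\bigl(n+1-7d(n)\bigr)$ is sound, with one slip: for $x=0$ the factor $2x-1$ is negative, so there you should use $\mu=d(n)$ directly; $\mu\le 2d(n)$ still holds.

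Your Step~4 is not actually an obstacle. On $100<n\le 196$, any $n$ with $d(n)\le 14$ has $7d(n)\le 98<n+1$. The only $n$ in that range with $d(n)\ge 15$ are $120,144,168,180$, where $7d(n)=112,105,112,126$, all below $n+1$. So the crude inequality $n+1>7d(n)$ already covers every $n>100$, and no refined estimates or extra computations are needed.

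On trade-offs: your approach is uniform and avoids the delicate bookkeeping of which of the three equivalent signatures is self-mirror or mirror to which. It also gives a quantitative lower bound showing $\tau$ grows roughly linearly in $V$. However, it is non-constructive and relies on the numerical entries of Table~\ref{table-countBySymmType} for $6\le n\le 100$. The paper's construction instead exhibits an explicit witness for every admissible $V$, in fact a tight one, $(V/4-1,0,2)$, with no numerical check needed for existence.
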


\begin{proof}
Note that all godseyes have mirror symmetry, so the condition that the (3,6)-fullerene is not a godseye, which is included to agree with the restriction to convex polyhedra in \cite{deza2005zigzag}, is not actually needed. 
The condition that $V \geq 24$ and $V \neq 28, 32$ is equivalent to the condition that $\frac{V}{4} \notin \{1, 2, 3, 4, 5, 7, 8\}$. 

To see that there are no (3,6)-fullerenes with 222 symmetry with $\frac{V}{4} \in \{1, 2, 3, 4, 5, 7, 8 \}$, it is straightforward to use the formula for $\tau(V)$ in Propsition~\ref{prop-eta} and the formulas for $\sigma(V), \delta(V), \mu(V)$ given above.

For example, for $\frac{V}{4} = 8$, since $8$ factors as $2^3$, $\sigma(32) = \dfrac{2^4 - 1}{2 - 1} = 15$, $\delta(32) = 0$, $\mu(32) = 5$, and $\nu(32) = 0$, so by {Proposition}~\ref{prop-eta}, $\tau(32) = \frac{1}{3} \cdot 15 - \frac{1}{3} \cdot 0 - 5 + 0 = 0$.

%For $V = 28$ and $\frac{V}{4} = 7$, $\sigma(28) = \dfrac{7^2 - 1}{7 - 1} = 8$, $\delta(28) = 2$, $\mu(28) = 2$, and $\nu(=28) = 0$, so $\tau(28) = \frac{1}{3} \cdot 8 - \frac{1}{3} \cdot 2 - 2 + 0 = 0$.

The other cases, with $\frac{V}{4} = 1, 2, 3, 4, 5, 7$, follow similarly. See the counts in Table~\ref{table-countBySymmType}.

To see that there do exist (3,6)-fullerenes with 222 symmetry with $V$ vertices for $V \notin \{1, 2, 3, 4, 5, 7, 8\}$, it is possible to use the formulas in Proposition~\ref{prop-count},  Equation~\ref{eqn-sigma}, and Proposition~\ref{prop-eta} to show that $\tau(V) > 0$. However, a constructive approach is simpler.  

Consider the (3,6)-fullerene with signature $(s, 0, 2)$. We will show that if this (3,6)-fullerene has 3-fold symmetry or mirror symmetry, then $s \in \{ 0, 1, 2, 3, 4, 6, 7 \}$. Consequently, if $s \notin \{ 0, 1, 2, 3, 4, 6, 7 \}$, then $(s, 0, 2)$ has no 3-fold symmetry and no mirror symmetry, and therefore must have 222 symmetry by Proposition~\ref{prop-222}.  Since  $\frac{V}{4} = (s+1)(b+1)$ by Equation~\ref{eqn-vCount} and $b = 0$, $\frac{V}{4} = s+1$. So $s \notin \{ 0, 1, 2, 3, 4, 6, 7 \}$ is equivalent to $\frac{V}{4} \notin \{1, 2, 3,4, 5, 7, 8\}$. Therefore, the (3,6)-fullerene $(s, 0, 2)$ will be an example of a (3,6)-fullerene with 222 symmetry for $\frac{V}{4} \notin \{1, 2, 3,4, 5, 7, 8\}$ and this will prove the proposition.

Suppose the (3,6)-fullerene $(s, 0, 2)$ has 3-fold rotational symmetry. We need to show that $s \in \{ 0, 1, 2, 3, 4, 6, 7 \}$.
By Proposition~3.2 of \cite{green2025enumerate}, it must have coinciding signatures, and by Proposition~4.2 of \cite{green2025enumerate}, $(s, 0, 2)$ is of the form $(cm-1, m-1, gm)$ for some $m \geq 1$, $c \geq 0$, and $g \geq 0$, where $g^2 + g + 1 \equiv 0 \pmod c$. So $m = 1, c = s + 1$, and $g = 2$, and $\displaystyle 2^2 + 2 + 1 \equiv 0 \pmod {s+1}$.  That is, $\displaystyle 7 \equiv 0\pmod{s+1}$, so either $s+1 = 7$ or $s + 1 = 1$.
So $s \in \{0, 1, 2, 3, 4, 6, 7\}$.

Suppose the (3,6)-fullerene $(s, 0, 2)$ has mirror symmetry. We need to show that $s \in \{ 0, 1, 2, 3, 4, 6, 7 \}$. By Lemma~6.2 of \cite{green2025enumerate}, either $(s, 0, 2)$ is self-mirror, or else one of the two equivalent signatures for this (3,6)-fullerene is self-mirror, and the signature $(s, 0, 2)$ has the remaining signature as its mirror signature.

    Case 1: $(s, 0, 2)$ is a self mirror.
Then by definition, the mirror image signature $(s, 0, s - 0 - 2 \pmod {s+1})$ is equal to $(s, 0, 2 \pmod {s+1})$. So $s-2 \equiv 2 \pmod {s+1}$, 
so $s+1 \equiv 5 \pmod {s+1}$.
Therefore, either $s+1 = 5$ or $s+ 1 = 1$. So $s \in \{0, 1, 2, 3, 4, 6, 7\}$.

Case 2: $(s, 0, 2)$ is not self-mirror. In this case, we use Equations~\ref{eqn-s2}-\ref{eqn-f3} to find alternative signatures, where $(s_1, b_1, f_1) = (s, 0, 2)$.

Subcase 2a: $s$ is even. By Equation~\ref{eqn-s2},
$s_2 = j_2(0+1)-1$ where $j_2$ is the smallest positive integer such that $j_2 \cdot 2 \equiv 0 \pmod{s+1}$. Since $s$ is even by assumption, $s+1$ is odd, so $j_2 = s+1$. Hence, $s_2 = (s+1)(0+1)-1 = s$. Since $s_2 = s_1 = s$ and $b_1 = 0$, by  Equation~\ref{eqn-b2}, $b_2 = 0$.
By Equation~\ref{eqn-f2}, 
$f_2 = -p_2(0 + 1) - (0+1)\pmod{s+1} = -p_2 - 1 \pmod{s+1}$, where $p_2$ is the smallest positive integer such that $p_2 \cdot 2 \equiv 1 \pmod {s+1}$. There is only one number less than or equal to $ s+1$ that satisfies this equation, namely $p_2 = \dfrac{s+2}{2}$, since $s$ is even. Therefore, $f_2 \equiv -\dfrac{s+2}{2} - 1  \equiv \dfrac{s}{2} - 1 \pmod {s+1}$. So the alternative signature is $(s, 0, \frac{s}{2} -1)$.

If this alternate signature is self-mirror, then $$(s, 0, s - 0 - (\frac{s}{2} - 1) \pmod{s+1}) = (s, 0, \frac{s}{2} - 1 \pmod{s+1}),$$ so $\frac{s}{2} + 1 \equiv \frac{s}{2} -1 \pmod{s+1}$, so $2\equiv 0 \pmod {s+1}$. It follows that either $s+1 = 2$ or $s+1 = 1$, so $s \in \{0, 1, 2, 3, 4, 6, 7\}$.
If it is not self-mirror, then the two signatures $(s, 0,2)$ and $(s, 0, \frac{s}{2} - 1)$ must be mirror signatures of each other by Lemma~9 of \cite{green2025enumerate}. That is, $$(s, 0, s-0-2 \pmod{s+1}) = (s, 0, \frac{s}{2} - 1 \pmod {s+1}),$$ so $s  - 2 \equiv \frac{s}{2} - 1 \pmod {s+1})$. Simplifying gives $\frac{s}{2} \equiv 1 \pmod {s+1}$, so $s \equiv 2 \pmod {s+1}$, so $-1 \equiv 2 \pmod {s+1}$. If follows that either $s+1 = 3$ or $s+1 = 1$, so $s \in \{0, 1, 2, 3, 4, 6, 7\}$.

Subcase 2b: $s$ is odd. 
In Equation~\ref{eqn-s2}, $j_2$ is the smallest positive integer such that $j_2 \cdot 2 \equiv 0 \pmod {s + 1}$. Since $s+1$ is even, $j_2 = \dfrac{s+1}{2}$. 
Therefore, Equation~\ref{eqn-s2} yields $s_2 = \frac{s+1}{2}(1) - 1 = \frac{s-1}{2}$.
By Equation~\ref{eqn-b2},  
$b_2 = \frac{h-2s_2}{2s_2+2}$, where $h = 2s_1b_1 + 2s_1 + 2b_1 = 2s$. Therefore, $b_2 =  \dfrac{2s-2(\frac{s-1}{2})}{2(\frac{s-1}{2})+2} =1$.

To solve for $f_2$ using Equation~\ref{eqn-f2}, we first find $p_2$, the smallest positive integer such that $p_2\cdot 2 \equiv 2\pmod{s+1}$. Since $s+1$ is even, there are two solutions to $p_2(2) \equiv 2\pmod{s+1}$, namely $p_2 = 1$ and $p_2 = \dfrac{s+1}{2} + 1$. The solution $p_2 = 1$ is the smallest.  Plugging into Equation~\ref{eqn-f2} yields $f_2 = -p_2(b_1 +1) - (b_2+1)\pmod{s_2+1} = -3\pmod{\frac{s+1}{2}}$. From this, we see that the alternate signature for $(s, 0, 2)$ if s is odd is $(\frac{s-1}{2}, 1, -3)$.

If this signature is self-mirror, then 
 $\frac{s-1}{2} - 1 + 3 \equiv-3\pmod{\frac{s+1}{2}}$, so $-1 - 1 + 3 \equiv-3\pmod{\frac{s+1}{2}}$.  So $\frac{s+1}{2} \mid 4$, so $s+1 \mid 8$.  Therefore, $s \in \{0, 1, 2, 3, 4, 6, 7\}$. 

 Alternatively, if this signature is not self-mirror, then the two signatures $(s, 0, 2)$ and $(\frac{s-1}{2}, 1, -3)$ must be mirror signatures by Lemma~9 of \cite{green2025enumerate}. So $s  = \frac{s-1}{2}$, which is not possible. 

 All possible cases where $(s, 0, 2)$ has 3-fold rotational symmetry or mirror symmetry force $s \in \{0, 1, 2, 3, 4, 6, 7\}$. Therefore, if $s \notin \{0, 1, 2, 3, 4, 6, 7\}$, i.e. $\frac{V}{4} \notin \{1, 2, 3, 4, 5, 7, 8\}$, then $(s, 0, 2)$ is (3, 6)-fullerene $V$ vertices with 222 symmetry.

\end{proof}

\pagebreak
\section{Symmetry Group Proofs}
\label{sec-symmProof}

In this section, we develop the background on symmetry groups needed to prove Proposition~\ref{prop-symmEquivSummary} and \ref{prop-upstairsDownstairsSymm} from Section~\ref{sec-symmDefs}. Our  methods for proving Proposition~\ref{prop-upstairsDownstairsSymm} follow Martin Bridson and Andr\'e Haefliger \cite{bridson2013metric}.

Let $F$ be the graph of a $(3, 6)$-fullerene and let $f: F \to S^2$ be an embedding into the sphere.   We will say that a homeomorphism $\alpha: S^2 \to S^2$ \emph{preserves} $f(F)$ if $\alpha$ takes vertices of $f(F)$ to vertices of $f(F)$ and edges of $f(F)$ to edges of $f(F)$. We will say that $\alpha$ \emph{fixes} $f(F)$ if for every vertex $v$ in $F$, $\alpha$ takes $f(v)$ to itself and for every edge $e$ in $F$, $\alpha$ takes $f(e)$ to itself. 
As in Section~\ref{sec-symmDefs}, for a given embedding in the sphere $f:F \to S^2$, let $Isom(S^2, F, f)$ be the group of isometries of the sphere that preserve $f(F)$. Recall from Section~\ref{sec-symmDefs} that an embedding $f: F \to S^2$ is called \emph{maximally symmetric} if there is no embedding $f': F \to S^2$ such that $Isom(S^2, F, f)$ is isomorphic to a proper subset of $Isom(S^2, F, f')$.

It will be useful to consider the symmetries of a $(3, 6)$-fullerene that is represented as the orbifold  quotient of a hexagonal tiling of the plane, as described in Section~\ref{sec-background}. As in that section, consider a regular hexagonal grid on the plane, with a superimposed parallelogram grid whose vertices lie in the center of hexagons. Let $\Gamma$ be the group of isometries of the plane generated by $180^\circ$ rotations around the vertices of parallelogram grid.  Let $Q = \Gamma \backslash \mathbb{R}^2$ be the quotient orbifold with the induced metric described in Section~\ref{sec-background}. Let $i: F \to Q$ be the embedding of $F$ in $Q$. Let $Isom(Q, F, i)$ represent the isometries of $Q$ that preserve  $i(F)$. Since the embedding $i$ is uniquely defined by the quotient structure (up to automorphisms of $F$), we will simplify notation by suppressing reference to the embedding function $i$. Thus, we will  write $F \subset Q$ and use the notation $Isom(Q)$ to mean $Isom(Q, F, i)$. 

For the graph $F$ of a $(3,6)$-fullerene, let $Aut(F)$ be its graph automorphism group. For any embedding $f:F \to S^2$, there is a natural homomorphism from $Isom(S^2, F, f)$ to $Aut(F)$ induced by restricting the isometry to the graph.  For 3-connected $(3, 6)$-fullerenes, it is straightforward to prove that for a maximally symmetric embedding, $f_0: F \to S^2$, this homomorphism is an isomorphism. Surjectivity follows from a theorem of Mani \cite{MANI1971}, which states that for any 3-connected, planar graph $G$,
there is a convex polyhedron with edge and vertex graph $G$, such that every element of $Aut(G)$ is induced by an isometry of the polyhedron. Details are omitted since this isomorphism is not needed to prove the propositions in this section.

All $(3,6)$-fullerenes are 3-connected with the exception of godseye, described in Section~\ref{sec-background} and illustrated in  Figure~\ref{fig-godseye}. For a maximally symmetric embedding of a godseye $f_0: F \to S^2$, the homomorphism $Isom(S^2, F, f_0) \to Aut(F)$ is not surjective, because there are Whitney flips that give  graph automorphisms that do not correspond to a homeomorphism of the sphere, and therefore can't correspond to isometries. However, for a maximally symmetric embedding $f_0: F \to S^2$, it is  possible to show that $Isom(S^2, F, f_0)$ is isomorphic to $Isom(Q)$, no matter whether $F$ is 2-connected or 3-connected. This is accomplished in the following sequence of lemmas, leading up to the proof of Proposition~3.1.

For any $(3,6)$-fullerene $F$ with embedding $f:F \to S^2$, let 
$MCG(S^2, F, f)$ represent the mapping class group of $S^2$ relative to $f(F)$, defined as the group of homeomorphisms of $S^2$ that preserve the graph $f(F)$, modulo the equivalence relation in which two homeomorphisms $h_1$ and $h_2$ are equivalent if there is an isotopy between them that preserves $f(F)$.

\begin{lemma} Let $F$ be the graph of a $(3,6)$-fullerene.
For any embedding $f:F \to S^2$, let $\pi_*: Isom(S^2, F, f) \to MCG(S^2, F, f)$ be the homomorphism defined by sending an isometry to its equivalence class of homeomorphisms in the mapping class group. This homomorphism $\pi_*$ is injective. 
\label{lem-IsomInjMCG}
\end{lemma}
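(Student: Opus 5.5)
Since $\pi_*$ is a homomorphism, it suffices to show that its kernel is trivial. So we will take an isometry $\alpha \in Isom(S^2, F, f)$ that is isotopic to the identity through homeomorphisms preserving $f(F)$, and show that $\alpha$ is the identity of $S^2$.

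The first step is to show that such an $\alpha$ fixes $f(F)$ in the sense defined above. An isotopy $h_t$ that preserves $f(F)$ acts at each time $t$ on the finite set of vertices and the finite set of edges by a permutation. Each $h_t$ maps $f(F)$ onto itself and depends continuously on $t$. Fix a vertex $v$. The path $t \mapsto h_t(f(v))$ is continuous, stays inside the finite vertex set $f(V(F))$, and starts at $f(v)$, so it is constant. Similarly, for an edge $e$, choose an interior point $x$ of $f(e)$. The path $t\mapsto h_t(x)$ stays in $f(F)$ and never meets a vertex, because vertices go to vertices under each $h_t$. It therefore stays in the interior of a single edge, which must be $f(e)$. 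Hence $\alpha = h_1$ takes each $f(v)$ to itself and each $f(e)$ to itself. Faces are also sent to themselves, since each face is determined by its boundary cycle, and $\alpha$ is orientation preserving because it is isotopic to the identity.

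The second step is to show that an isometry of $S^2$ that fixes every vertex of $f(F)$ must be the identity. An isometry of $S^2$ is the restriction of an orthogonal map of $\mathbb{R}^3$. Such a map is determined by its action on any three points of $S^2$ that are linearly independent as vectors in $\mathbb{R}^3$. The graph $F$ has $V \geq 4$ vertices. If the vertices $f(V(F))$ span $\mathbb{R}^3$, then $\alpha$ fixes a basis and is therefore the identity.

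The main obstacle is the degenerate case in which all the vertices of $f(F)$ lie on a single great circle, or more generally in a plane through the origin. In that case the fixed vertices only show that $\alpha$ is either the identity or the reflection in that plane. I would rule out the reflection as follows. First, the reflection reverses orientation, while $\alpha$ is isotopic to the identity and so preserves orientation. As a direct check, the reflection would swap the two hemispheres; but every edge $f(e)$ is mapped to itself, so the faces adjacent to the edges would have to be mapped to themselves, contradicting the swap. Concretely, $\alpha$ fixes a face $R$ setwise and fixes an edge on its boundary. If $\alpha$ were the reflection, it would exchange the two sides of that edge near an interior point fixed on the equator, unless the edge lies off the equator. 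I would state this carefully using the orientation argument, which is the cleanest. With both cases handled, $\ker \pi_*$ is trivial and $\pi_*$ is injective.
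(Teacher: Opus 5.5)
Your proof is correct and has the same skeleton as the paper's. An element $\alpha$ of the kernel must fix every vertex of $f(F)$, and an isometry fixing enough points of $S^2$ is the identity. Your first step is more explicit than the paper's one-line claim: you use the path $t\mapsto h_t(f(v))$, which lies in a finite set. The real difference is in the second step, and there your version is the sound one.

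The paper asserts that, because $F$ is cubic, some three vertices of $f(F)$ do not lie on a great circle. For an arbitrary topological embedding this is false. Embed the tetrahedron $K_4$ with its four vertices on the equator, the $4$-cycle along the equator, and the two remaining edges running through opposite hemispheres. So the paper's reasoning does not exclude the reflection in the plane of a great circle containing all vertices. That is exactly the degenerate case you isolate.

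Your observation closes this gap: $\alpha$ is isotopic to the identity, so it is orientation preserving. It also makes the case split unnecessary. $\alpha$ is then a rotation, and a rotation fixing two non-antipodal points is the identity; any three distinct vertices contain such a pair.

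The ``direct check'' in your last paragraph is garbled. The correct version runs as follows. The reflection is an involution that maps each edge to itself with both endpoints fixed, so it would fix every edge pointwise. That would put all of $f(F)$ on the great circle, which is impossible for a cubic graph. You are right to rely on the orientation argument instead.
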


\begin{proof}
    Suppose that $\alpha$ is an isometry of $S^2$ that fixes the graph $f(F)$. If $\alpha$ is sent to the identity element of $MCG(S^2, F, f)$, then $\alpha$ is isotopic to the identity map through an isotopy that preserves the graph $f(F)$. This isotopy must preserve the vertices of $f(F)$, and therefore, the original map $\alpha$ must fix each vertex.  Since $F$ is a cubic graph, there must be at least three vertices of $f(F)$ that are not on a great circle. Any isometry of $S^2$ that fixes 3 points that are not on a great circle is the identity. Therefore, $\alpha$ must the the identity, so the homomorphism $\pi_*: Isom(S^2, F, f) \to MCG(S^2, F, f)$ is injective. 
\end{proof}

\begin{lemma}
Let $F$ be the graph of a $(3,6)$-fullerene.  For any two embeddings $f_1: F \to S^2$ and $f_2:F \to S^2$, $MCG(S^2, F, f_1) \cong MCG(S^2, F, f_2)$.
\label{lem-allMCG}
\end{lemma}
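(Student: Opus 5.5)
The idea is to build a homeomorphism of the sphere carrying one embedded copy of the graph onto the other, and to use it to conjugate one mapping class group into the other.

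\textbf{Step 1: an orientation-preserving graph homeomorphism.} Let $f_1, f_2 : F \to S^2$ be two embeddings. I will construct a homeomorphism $\phi : S^2 \to S^2$ with $\phi \circ f_1 = f_2$ on $F$. The intended tool is the Jordan--Schoenflies theorem applied face by face.

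First I need the two embeddings to have the same face structure. For a 3-connected $(3,6)$-fullerene, Whitney's theorem gives that any two embeddings differ only by a graph automorphism and possibly a reflection. For a godseye, the faces and their boundary cycles are forced by the description in Section~\ref{sec-background}: nested hexagon pairs meeting along opposite sides, with triangles at each end. Only the Whitney flips at the 2-cuts can vary the embedding. So, after composing $f_2$ with an automorphism $\beta$ of $F$ (and a reflection of $S^2$ if needed), I may assume the facial cycles of $f_1(F)$ and $f_2(F)$ correspond.

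Next I define $\phi$ on the graph by $f_2 \circ f_1^{-1}$. Each face of $f_1(F)$ is an open disk whose boundary is a Jordan curve, and the same holds for $f_2(F)$. The Schoenflies theorem lets me extend the boundary homeomorphism across each face to the corresponding face. Gluing these extensions gives the global homeomorphism $\phi$.

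\textbf{Step 2: conjugation.} I define
\[
\Phi : MCG(S^2, F, f_1) \to MCG(S^2, F, f_2), \qquad [h] \mapsto [\phi h \phi^{-1}].
\]
Here is why this is well defined. If $h$ preserves $f_1(F)$, then $\phi h \phi^{-1}$ sends vertices and edges of $\phi(f_1(F))$ to vertices and edges of the same set. Since $\phi(f_1(F))$ equals $f_2(F)$ as a set, with vertices and edges matching, $\phi h \phi^{-1}$ preserves $f_2(F)$. Twisting by the automorphism $\beta$ from Step 1 does not matter, because preservation is a condition on the image sets only. Likewise, an isotopy $h_t$ preserving $f_1(F)$ conjugates to an isotopy $\phi h_t \phi^{-1}$ preserving $f_2(F)$.

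$\Phi$ is a homomorphism because conjugation respects composition. Its inverse is conjugation by $\phi^{-1}$, so $\Phi$ is an isomorphism.

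\textbf{Main obstacle.} The difficult step is Step 1 in the 2-connected godseye case. There, distinct embeddings need not be related by an automorphism plus a reflection, because a Whitney flip can change which faces appear.

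What I will try first is to argue directly from the godseye's rigid structure. In a godseye every 2-edge-cut separates a nested pair, and flipping a nested inner part about its two attachment edges reproduces the same face sizes and the same nesting pattern. So any flipped embedding is again a godseye embedding, and it can be matched to $f_1$ by a relabeling map; this relabeling need not be a graph automorphism. Since the statement only concerns mapping class groups, which depend on $f_i(F)$ only as a set with its cell structure, a cellular homeomorphism between the two cell decompositions of $S^2$ suffices for the conjugation in Step 2.

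If that direct argument fails, the fallback is to show that both cell complexes are isomorphic to the quotient cell structure of $Q$ from Section~\ref{sec-background}. Every embedding of $F$ would then be cellularly homeomorphic to the one induced by $Q$, and the lemma would follow by transitivity.
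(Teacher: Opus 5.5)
Your proposal is correct and is essentially the paper's proof: you produce a homeomorphism of $S^2$ carrying $f_1(F)$ cellularly onto $f_2(F)$ (by Whitney uniqueness in the 3-connected case, and because Whitney flips preserve the godseye face structure otherwise), you make it explicit by a face-by-face Schoenflies extension, and then you conjugate. One correction: your ``main obstacle'' claim that the godseye relabeling need not be a graph automorphism is false, since any such cellular homeomorphism $\phi$ induces the automorphism $f_2^{-1}\circ\phi\circ f_1$ of $F$, so the flips are absorbed by automorphisms exactly as in your Step 1; likewise no reflection of $S^2$ is needed, because conjugating by an orientation-reversing $\phi$ works just as well.
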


\begin{proof}
Let $f_1: F \to S^2$ and $f_2: F \to S^2$ be two embeddings of $F$ in $S^2$. If $F$ is 3-connected, then by Whitney's uniqueness theorem \cite{whitneyCongruentGraphs1932}, there is a homeomorphism $h: S^2 \to S^2$ that takes vertices and edges of $f_1(F)$ to vertices and edges of $f_2(F)$. If $F$ is, instead, a 2-connected godseye, then by Whitney’s 2-isomorphism theorem \cite{whitneyClassificationOfGraphs1933}, there is a sequence of Whitney flips along pairs of separating vertices that transforms the embedding $f_1$ to $f_2$. For a godseye, these Whitney flips do not change the face structure of the embedding. So there still exists a homeomorphism $h: S^2 \to S^2$ that takes vertices and edges of $f_1(F)$ to vertices and edges of $f_2(F)$, respectively. Conjugation by this homeomoprhism induces an isomorphism from $MCG(S^2, F, f_1)$ to $MCG(S^2, F, f_2)$. 
\end{proof}

\begin{lemma}
     Let $Q$ be the quotient representation of a $(3, 6)$-fullerene with graph $F$. 
Then $Isom(Q) \cong MCG(S^2, F, f)$ for any embedding $f: F \to S^2$.  
     \label{lem-QIsoMCG1}
\end{lemma}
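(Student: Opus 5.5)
By Lemma~\ref{lem-allMCG}, it suffices to prove $Isom(Q) \cong MCG(S^2, F, f)$ for one convenient embedding $f$. We take $f$ to be the homeomorphism $Q \to S^2$ from \cite{green2024polyhedra}, restricted to $i(F) \subset Q$. With this choice, the plan is to build a homomorphism $\Phi: Isom(Q) \to MCG(S^2, F, f)$ and show it is injective and surjective.

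\emph{Step 1: definition of $\Phi$.} An isometry $\beta$ of $Q$ that preserves $F$ conjugates, via the homeomorphism $Q \cong S^2$, to a homeomorphism of $S^2$ preserving $f(F)$. We send $\beta$ to its mapping class, and $\Phi$ is then clearly a homomorphism.

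\emph{Step 2: injectivity.} Suppose $\Phi(\beta)$ is trivial. Then $\beta$ is isotopic to the identity through maps preserving $f(F)$. As in Lemma~\ref{lem-IsomInjMCG}, $\beta$ must then fix every vertex and every edge of $F$ (the edges setwise). We would like to conclude that $\beta$ is the identity, and we do this by lifting to the plane. Following \cite{bridson2013metric}, $\beta$ lifts to an isometry $\tilde\beta$ of $\mathbb{R}^2$ that normalizes $\Gamma$ and preserves the hexagonal tiling. Fix a hexagonal face $H$ that is not special, or alternatively a vertex $v$ together with its three incident edges. Because $\beta$ fixes everything downstairs, we can compose $\tilde\beta$ with an element of $\Gamma$ so that the lift fixes a lifted vertex $\tilde v$ and each of its three incident edges. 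An isometry of the plane that fixes a point and three non-collinear directions is the identity. Hence $\beta = \mathrm{id}$.

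\emph{Step 3: surjectivity.} This is the main obstacle. Let $h$ be a homeomorphism of $S^2$ preserving $f(F)$. It induces a combinatorial automorphism of the embedded map: it permutes vertices, edges and faces, and it sends triangles to triangles and hexagons to hexagons. We lift this combinatorial automorphism to the hexagonal tiling. The universal orbifold cover of $Q$ minus its cone points, completed, is $\mathbb{R}^2$ with the hexagonal tiling. Fix a flag (vertex, edge, face) downstairs and its image under $h$, and choose lifts of both flags. There is a unique isometry $\tilde\beta$ of the regular hexagonal tiling taking the first lifted flag to the second; it may be orientation-reversing if $h$ is. We must check that $\tilde\beta$ normalizes $\Gamma$, equivalently that it maps the lattice of special-hexagon centers to itself. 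We argue this by path-lifting. The tiling is determined combinatorially, and $h$ together with $\tilde\beta$ agree on adjacency, so $\tilde\beta$ covers $h$ combinatorially. Special hexagons are exactly the lifts of triangles, so they are carried to special hexagons. Therefore $\tilde\beta$ descends to an isometry $\beta$ of $Q$ preserving $F$.

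Finally, $\beta$ and $h$ induce the same permutation of vertices, edges and faces. Both maps are then determined up to isotopy rel $f(F)$ face by face, because each face is a disk and homeomorphisms of a disk agreeing on its boundary are isotopic by the Alexander trick. So $\Phi(\beta) = [h]$. The delicate points in this step are making the path-lifting argument rigorous near the cone points, and handling the godseye case, where faces may share two edges. Faces are still disks there, so the Alexander trick argument should go through unchanged.
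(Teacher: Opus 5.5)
Your proof is correct and has the same skeleton as the paper's: reduce to one embedding by Lemma~\ref{lem-allMCG}, conjugate by the homeomorphism $Q\to S^2$, check injectivity and surjectivity, and finish with the Alexander trick. Where you differ is in how you obtain the needed isometries.

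For injectivity, the paper stays downstairs. An isometry fixing every vertex and every edge setwise fixes each edge pointwise, and a regular hexagon, or a hexagon folded by a $180^\circ$ rotation, is rigid once its boundary is fixed. You instead lift, compose with an element of $\Gamma$ to fix a lifted vertex, and use that a planar isometry fixing a point and two independent directions is trivial. Both work.

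For surjectivity, the paper only asserts that an $F$-preserving homeomorphism of $Q$ is isotopic to an isometry ``using the Alexander trick.'' It leaves implicit why an isometry with the same combinatorial action exists. One would build it face by face, using that hexagonal faces are regular and that $D_6/C_2\cong D_3$, acting on a folded special hexagon, realizes every symmetry of a triangle's boundary. Your upstairs construction, via simple transitivity of the tiling's symmetry group on flags, supplies that existence step explicitly and meshes with Lemmas~\ref{lem-lift} and \ref{lem-isomorphOfGroups}. The cost is some covering bookkeeping.

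The point you flag as delicate near the cone points is easy to settle. Treat flags as triangles of the barycentric subdivision. Adjacency is across sides whose interiors miss the cone points, so $q$ sends $i$-adjacent flags to $i$-adjacent flags. Two adjacency-preserving maps from the connected flag graph of the tiling that agree on one flag therefore agree everywhere. Hence $q\circ\tilde\beta$ and $h\circ q$ agree on all flags, special hexagons go to special hexagons, $\tilde\beta(Z)=Z$, and $\tilde\beta$ normalizes $\Gamma$ by Lemma~\ref{lem-normal}.

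Two small repairs are needed.
\begin{itemize}
\item The Alexander trick needs $h$ and $\beta$ to agree pointwise on $F$, not just to induce the same permutation of cells. So first isotope $\beta^{-1}h$ on each edge rel its endpoints; these are fixed and distinct, since $F$ has no loops. Extend that isotopy to $S^2$, and only then cone off the faces.
\item $\mathbb{R}^2$ is the orbifold universal cover of $Q$, i.e.\ the branched cover $q$ you already have. It is not the completed universal cover of $Q$ minus its cone points, which is not the plane.
\end{itemize}
Neither point affects the validity of your approach.
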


\begin{proof}
$Q$ is homeomorphic to $S^2$ \cite{green2024polyhedra}. 
Let $g: Q \to S^2$ be a homoeomorphism, so that $g$ induces an embedding of $F$ in $S^2$. 
Let  $g_*: Isom(Q) \to Homeo(S^2, F, g)$ be  the homomorphism induced by conjugation by $g$, and let  $\pi_*: Homeo(S^2, F, g) \to MCG(S^2, F, g)$ be  the projection homomorphism that takes any homeomorphism to its equivalence class. 
The homomorphism $\pi_* \circ g_*$ is injective, by the following argument. For any $\alpha \in Isom(Q)$, if $g \circ \alpha \circ g^{-1}$ is isotopic to the identity on $S^2$, through an isometry that preserves $g(F)$, then $\alpha$ is isotopic to the identity on $Q$ through an isotopy that preserves $F$. But since the identity fixes each vertex and edge of $F$, this isotopy must fix each vertex and edge of $F$. So $\alpha$ fixes each vertex and edge of $F$. Because $\alpha$ is an isometry, it must pointwise fix each edge of $F$. Since each face of $Q - F$ is either a regular hexagon or a triangular cone made from a regular hexagon, any isometry that pointwise fixes the edges of a face must pointwise fix the entire face. So $\alpha$ is the identity isometry. 

In addition, $\pi_* \circ g_*$ is surjective, by the following argument. If $\beta$ is a homeomorphism representing an element of $MCG(S^2, F, g)$, then $ g^{-1} \circ \beta \circ g $ is a homeomorphism of $Q$ that preserves the graph $F$. Since $Q$ is the quotient of a regular hexagonal tiling, this homeomorphism is isotopic to an isometry $\gamma$, via an isotopy that preserves $F$, using the Alexander trick. Since $g_*(\gamma)$ represents the same element of $MCG(S^2, F, g)$ as $\beta$, the homomorphism $\pi_* \circ g_*$ is surjective. 

Therefore, $\pi_* \circ g_*$ is an isomorphism, and  $Isom(Q) \cong MCG(S^2, F, g)$. If $f: F \to S^2$ is any other embedding, then by Lemma~\ref{lem-allMCG}, $MCG(S^2, F, g) \cong MCG(S^2, F, f)$, so $Isom(Q) \cong MCG(S^2, F, f)$, as wanted. 

\end{proof}

\begin{lemma}
     Let $Q$ be the quotient representation of a $(3, 6)$-fullerene with graph $F$. Then $Isom(Q)$ is a finite group. 
     \label{lem-finite}
\end{lemma}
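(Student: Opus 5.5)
The idea is to show that every isometry of $Q$ is determined by a finite amount of combinatorial data attached to the graph $F \subset Q$. Since $F$ is finite (it has $V$ vertices and $3V/2$ edges), this will bound $|Isom(Q)|$. Concretely, I will define a map from $Isom(Q)$ to the set of permutations of the vertex set of $F$. It sends an isometry $\alpha$ to the permutation of vertices it induces; this is well defined because elements of $Isom(Q)$ preserve $F$ by definition. This map is a group homomorphism into a finite symmetric group. So it suffices to show that an element of the kernel is the identity.

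Alternatively, and more directly, I can reuse the injectivity argument already given in the proof of Lemma~\ref{lem-QIsoMCG1}. Suppose $\alpha \in Isom(Q)$ fixes every vertex of $F$. Then $\alpha$ must also fix every edge setwise. This holds because $F$ is a simple graph in the 3-connected case, so an edge is determined by its endpoints. For a godseye I will instead check that $F$ has no multiple edges either; the godseyes described in Section~\ref{sec-background} are simple cubic graphs. Once each edge is fixed setwise with its endpoints fixed, $\alpha$ restricted to the edge is an isometry of a geodesic segment fixing both ends. So it fixes the edge pointwise. Then, exactly as in Lemma~\ref{lem-QIsoMCG1}, each face of $Q - F$ is a regular hexagon or a cone obtained from a special hexagon, and an isometry fixing its boundary pointwise fixes the face. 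Hence $\alpha$ is the identity, the homomorphism is injective, and $|Isom(Q)| \le V!$.

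The main obstacle is the edge step. The subtle case is whether an isometry fixing all vertices could swap two parallel edges between the same pair of vertices. I will handle it by noting that $(3,6)$-fullerenes have no 2-gon faces, and two parallel edges would bound a region that must contain a face with at most two sides. Alternatively, I can avoid the question entirely by tracking the action on edges as well: map $Isom(Q)$ into the finite group of permutations of vertices $\times$ permutations of edges. The kernel argument is then immediate. The face-fixing step requires the cone-point faces to be handled with care. Since the boundary of a triangular face downstairs is the image of a hexagon's boundary under the quotient, pointwise fixing it forces the identity on the cone, as the cone is the metric cone over its boundary.
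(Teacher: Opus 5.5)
Your proposal is correct and is essentially the paper's argument: the paper injects $Isom(Q)$ into the finite group $Aut(F)$ by restriction and shows the kernel is trivial exactly as you do. An isometry fixing all vertices and edges fixes each edge pointwise, and hence fixes every hexagonal or cone face pointwise. Your map into permutations of vertices (or of vertices and edges) is the same idea; your extra check that $F$ has no parallel edges is correct but becomes unnecessary once you also track edges, which is effectively what mapping into $Aut(F)$ does.
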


\begin{proof}
Consider the homomorphism $\pi_0: Isom(Q) \to Aut(F)$ induced by restricting an isometry on $Q$ to the embedded graph $F$. This homomorphism is injective, by the following argument. Any isometry of $Q$ that fixes $F$ must fix all vertices and edges. Since it is an isometry, if must fix all points of edges. Therefore, it must also fix all points of the triangles and hexagons in $Q - F$. 

Since $F$ is a finite graph, $Aut(F)$ is finite. Since $\pi_0: Isom(Q) \to Aut(F)$ is injective, $Isom(Q)$ must be finite as well. 
\end{proof}

\begin{lemma}
    Let $F$ be the graph of a $(3,6)$-fullerene. There exists an embedding $k: F \to S^2$ such that $ Isom(S^2, F, k) \cong MCG(S^2, F, k)$.
    \label{lem-IsoIsoMCG}
\end{lemma}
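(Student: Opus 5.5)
We will build the embedding $k$ directly from the quotient representation $Q$, using the fact that $Isom(Q)$ is a finite group (Lemma~\ref{lem-finite}) and that $Isom(Q)\cong MCG(S^2,F,f)$ for every embedding $f$ (Lemma~\ref{lem-QIsoMCG1}). The strategy is to transport the finite group $Isom(Q)$ to a group of isometries of the round sphere, and to choose $k$ so that the transported group preserves $k(F)$. Lemma~\ref{lem-IsomInjMCG} then supplies the injection $Isom(S^2,F,k)\hookrightarrow MCG(S^2,F,k)$, so it only remains to produce enough isometries.

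First, $Q$ is a topological sphere (a homeomorphism exists by \cite{green2024polyhedra}), and $G=Isom(Q)$ is a finite group acting on it by homeomorphisms. By the classical theorem that every finite group of homeomorphisms of $S^2$ is topologically conjugate to a subgroup of $O(3)$ (Ker\'ekj\'art\'o--Brouwer--Eilenberg), there is a homeomorphism $g: Q\to S^2$ with $g G g^{-1}\subset O(3)$. Alternatively, one can avoid this citation by hand. Since $G$ acts isometrically on the flat cone-metric $Q$, $G$ preserves a conformal structure on $Q$ minus the four cone points, and this structure extends over the cone points. Uniformization then gives a conformal identification of $Q$ with the Riemann sphere on which $G$ acts by M\"obius and anti-M\"obius maps. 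A finite group of such maps has a fixed point in hyperbolic 3-space, since it has a bounded orbit with a unique barycenter. Conjugating that fixed point to the origin puts $G$ inside $O(3)$. I would present this conformal argument as the main route.

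Next, set $k = g\circ i$, where $i: F\to Q$ is the canonical embedding. For each $\alpha\in G$, the map $g\alpha g^{-1}$ is an isometry of $S^2$. It preserves $k(F)$, because $\alpha$ preserves $i(F)$. This gives an injective homomorphism $G\to Isom(S^2,F,k)$; injectivity holds because conjugation by $g$ is injective. Composing with the injection $\pi_*$ of Lemma~\ref{lem-IsomInjMCG} yields
\[
G\hookrightarrow Isom(S^2,F,k)\hookrightarrow MCG(S^2,F,k)\cong G,
\]
where the last isomorphism is Lemma~\ref{lem-QIsoMCG1}. In fact the composite is exactly the isomorphism $\pi_*\circ g_*$ built in the proof of Lemma~\ref{lem-QIsoMCG1}, with the same $g$. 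Since that composite is surjective, the second arrow $\pi_*$ is surjective as well as injective, so $Isom(S^2,F,k)\cong MCG(S^2,F,k)$.

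The main obstacle is the conjugation step, namely making the finite action on $Q$ linear on $S^2$. The issue at the cone points needs care: there the conformal structure must be extended by the local uniformizing coordinate $z^{1/2}$ around each cone point of angle $\pi$. One must also handle orientation-reversing elements, which act anti-conformally. This is where I would spend the detail. The rest is bookkeeping with the earlier lemmas.
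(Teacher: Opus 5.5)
Your proposal is correct and follows the paper's proof. Both arguments conjugate the finite group $Isom(Q)$ into $O(3)$ to obtain $k$, then combine the injectivity of $\pi_*$ from Lemma~\ref{lem-IsomInjMCG} with $Isom(Q)\cong MCG(S^2,F,k)$ to force $\pi_*$ to be onto. There are two differences. For the conjugation step, the paper simply cites Kolev, whereas you sketch a uniformization argument plus a fixed point in $\mathbb{H}^3$. For the last step, the paper observes that a finite group is not isomorphic to a proper subgroup of itself, whereas you reuse the explicit surjection $\pi_*\circ g_*$ from the proof of Lemma~\ref{lem-QIsoMCG1}.

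One small slip in your sketch: if $z$ is the flat coordinate at a cone point of angle $\pi$, the conformal chart there is $z^2$, not $z^{1/2}$.
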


\begin{proof}
Consider a homeomorphism $g: Q \to S^2$, where $Q$ is the quotient representation of the $(3, 6)$-fullerene. This homeomorphism restricts to an embedding $g: F \to S^2$. The map $g$ induces a homomorphism $g_*: Isom(Q) \to Homeo(S^2, F, g)$ via conjugation. This homomorphism is clearly injective, so it creates an isomorphism from $Isom(Q)$ to its image $g_*(Isom(Q)) \subseteq Homeo(S^2, F, g)$. By Lemma~\ref{lem-finite}, $Isom(Q)$ is finite, so $g_*(Isom(Q))$ is a finite group of homeomorphisms of the sphere. Therefore, it is  conjugate, via a homeomorphism of $S^2$, to a subgroup of  the isometry group of the sphere \cite{kolev2006}. That is, there is a map $h: S^2 \to S^2$ such that $h_*: Homeo(S^2) \to Homeo(S^2)$ defined by conjugation $h_*: \beta \mapsto h \circ \beta \circ h^{-1}$  induces an isomorphism from $g_*(Isom(Q))$ to a subgroup of $Isom(S^2)$. Since $h$ takes vertices and edges of $g(F)$ to vertices and edges of $h(g(F))$, the image of $h_* \circ g_*$ is actually in $Isom(S^2, F, h \circ g)$.  Since $\pi_*: Isom(S^2, F, h \circ g) \to MCG(S^2, F, h \circ g)$ is injective by Lemma~\ref{lem-IsomInjMCG}, $\pi_* \circ h_* \circ g_*$ is an isomorphism from $Isom(Q)$ to a subgroup of $MCG(S^2, F, h \circ g)$. But $Isom(Q) \cong MCG(S^2, F, h \circ g)$ by Lemma~\ref{lem-QIsoMCG1} and is finite by Lemma~\ref{lem-finite}, and a finite group cannot be isomorphic to a proper subgroup of itself. Therefore, $\pi_*: Isom(S^2, F, h \circ g) \to MCG(S^2, F, h \circ g)$ must be surjective and so $Isom(S^2, F, h\circ g) \cong MCG(S^2, F, h \circ g)$.    
This proves the lemma, with $k = h \circ g$.

\end{proof}

We can now prove Proposition~\ref{prop-symmEquivSummary}, restated here for convenience. This proposition shows that the symmetry group of a $(3,6)$-fullerene can be thought of as either the isometry group of the quotient representation or as the isometry group of a maximally symmetric embedding into the sphere. 

\noindent {\bf Proposition 3.1.}
\begin{it}
    Suppose $Q$ is a quotient representation of a $(3,6)$-fullerene with edge and vertex graph $F$. Then $Isom(Q) \cong Isom(S^2, F, f_0)$, where $f_0: F \to S^2$ is any maximally symmetric embedding. 
\end{it}

\begin{proof}
Let $f_0: F \to S^2$ be a maximally symmetric embedding. By Lemma~\ref{lem-IsomInjMCG},  $Isom(S^2, F, f_0)$ is isomorphic to a subgroup of $MCG(S^2, F, f_0)$.  By 
 Lemma~\ref{lem-allMCG},  for any embedding $k: F \to S^2$, $MCG(S^2, F, f_0) \cong MCG(S^2, F, k)$.  By Lemma~\ref{lem-IsoIsoMCG}, there exists an embedding $k: F \to S^2$ such that $MCG(S^2, F, k) \cong Isom(S^2, F, k)$. Therefore,  $Isom(S^2, F, f_0)$ is isomorphic to a subgroup of $Isom(S^2, F, k)$.
Since $k$ is a maximally symmetric embedding, 
 $Isom(S^2, F, f_0) \cong Isom(S^2, F, k)$. By Lemma~\ref{lem-QIsoMCG1}, $Isom(Q) \cong MCG(S^2, F, k)$. \\
 Since $MCG(S^2, F, k) \cong Isom(S^2, F, k) \cong Isom(S^2, F, f_0)$, it follows that $Isom(Q) \cong Isom(S^2, F, f_0)$. 

 \end{proof}

For the remainder of this section, let $W$ be the set of points on the edges and vertices of a hexagonal grid on $\mathbb{R}^2$, and let $Z$ the vertices of a superimposed parallelogram grid, with each vertex in the center of a hexagon.  Let $\text{Isom}(\mathbb{R}^2, Z)$ be the group of isometries of $\mathbb{R}^2$ that  preserve $Z$, and let $\text{Isom}(\mathbb{R}^2, Z \cup W) \subseteq \text{Isom}(\mathbb{R}^2, Z)$ be the group of isometries that preserve both $Z$ and $W$. Let $\Gamma$ be a subgroup of $\text{Isom}(\mathbb{R}^2, Z \cup W)$ generated by $180^\circ$ rotations around points in $Z$. Let $Q = \Gamma \backslash \mathbb{R}^2$ and let $q: \mathbb{R}^2 \to Q$ be the quotient map. The points $q(Z)$ are called \emph{singular points}.

\begin{lemma}
The subgroup $\Gamma$ is normal in $\text{Isom}(\mathbb{R}^2, Z)$. 
\label{lem-normal}
\end{lemma}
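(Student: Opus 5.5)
The idea is to use the standard fact that conjugating a point rotation by an isometry gives another point rotation. For $z \in Z$ let $\rho_z$ denote the $180^\circ$ rotation about $z$, so that $\Gamma$ is generated by $\{\rho_z : z \in Z\}$. To show $\Gamma$ is normal in $\text{Isom}(\mathbb{R}^2, Z)$, it suffices to check that for every $\phi \in \text{Isom}(\mathbb{R}^2, Z)$ and every generator $\rho_z$, the conjugate $\phi \rho_z \phi^{-1}$ again lies in $\Gamma$. Normality of the whole group then follows because conjugation by $\phi$ is a homomorphism: it sends a product of generators to the product of their conjugates, and sends inverses to inverses.

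The key computation is the following. If $\phi$ is any isometry of the plane, then $\phi \rho_z \phi^{-1}$ fixes $\phi(z)$, since
\[
\phi \rho_z \phi^{-1}(\phi(z)) = \phi(\rho_z(z)) = \phi(z).
\]
It is also an isometry whose linear part is conjugate to $-I$, and $-I$ is central in $O(2)$, so that linear part is again $-I$. Equivalently, it is an involution with a single isolated fixed point, because $\phi$ carries the fixed-point set of $\rho_z$ bijectively onto the fixed-point set of the conjugate. Hence
\[
\phi \rho_z \phi^{-1} = \rho_{\phi(z)}.
\]
I would write this out concretely by setting $\phi(x) = Ax + t$ with $A \in O(2)$ and $\rho_z(x) = 2z - x$. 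Then a direct calculation gives $\phi\rho_z\phi^{-1}(y) = 2\phi(z) - y$.

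Finally, since $\phi$ preserves $Z$, we have $\phi(z) \in Z$, so $\rho_{\phi(z)}$ is one of the generators of $\Gamma$. This shows $\phi \Gamma \phi^{-1} \subseteq \Gamma$ for every $\phi$. Applying the same inclusion to $\phi^{-1}$, which also preserves $Z$ because $Z$ is a discrete lattice coset and $\phi$ restricted to $Z$ is a bijection, gives the reverse inclusion. The inclusion $\Gamma \subseteq \text{Isom}(\mathbb{R}^2, Z)$ is clear, since each $\rho_z$ maps the parallelogram grid vertex set $Z$ to itself; indeed $Z$ is a translate of a lattice and is symmetric about each of its points.

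There is no serious obstacle here. The only subtle point is making sure that "preserve $Z$" means $\phi(Z) = Z$ rather than merely $\phi(Z) \subseteq Z$. I would settle this by noting that $\text{Isom}(\mathbb{R}^2, Z)$ is a group, so it contains $\phi^{-1}$ whenever it contains $\phi$, and that $\phi$ maps the discrete set $Z$ onto itself.
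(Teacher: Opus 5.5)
Your proof is correct and is essentially the paper's argument: conjugating the half-turn about $z$ by an isometry $\phi$ that preserves $Z$ gives the half-turn about $\phi(z)\in Z$, which is again a generator of $\Gamma$, so normality follows from $\Gamma$ being generated by such half-turns. Your explicit formula $\phi\rho_z\phi^{-1}(y)=2\phi(z)-y$ and your remarks about $\phi^{-1}$ fill in details the paper leaves implicit; the reverse inclusion is in fact automatic once $\phi\Gamma\phi^{-1}\subseteq\Gamma$ holds for every element $\phi$ of the group.
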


\begin{proof}
For any isometry $g: \mathbb{R}^2 \rightarrow \mathbb{R}^2$ that preserves $Z$, and for any $\gamma \in \Gamma$ that is a $180^\circ$ rotation around a point $x \in Z$, the composition $g \circ \gamma \circ g^{-1}$ is a $180^\circ$ rotation that takes $g(x)$ to itself, and $g(x) \in Z$.  Since $\Gamma$ is generated by $180^\circ$ rotations around points of $Z$, $\Gamma$ is normal in $\text{Isom}(\mathbb{R}^2, Z)$.
\end{proof}

 \begin{lemma}
Suppose that $f$ is a homeomorphism of $Q$ that preserves $q(Z)$. Suppose $x_0$ is a point in $\mathbb{R}^2$ that does not project to a singular point, and let $y_0 \in \mathbb{R}^2$ be a point in $ q^{-1}(f(q(x_0)))$. Then there is a unique lift $\tilde f:R^2 \rightarrow R^2$ that preserves $Z$ such that $q \circ \tilde f = f \circ q$ and such that $\tilde f(x_0) = y_0$. 

    \[
\begin{tikzcd}
\mathbb{R}^2 \arrow[r, "\tilde f"] \arrow[d, "q"'] 
& \mathbb{R}^2 \arrow[d, "q"] \\
 Q \arrow[r, "f"] 
&  Q
\end{tikzcd}
\]

Suppose, instead, that $x_0$ is a point in $\mathbb{R}^2$ that does project to a singular point,  and $y_0 \in \mathbb{R}^2$ is a point in $ q^{-1}(f(q(x)))$. Then there are exactly two distinct lifts $\tilde f_1:\mathbb{R}^2 \rightarrow \mathbb{R}^2$ and $\tilde f_2:\mathbb{R}^2 \rightarrow \mathbb{R}^2$ that preserve $Z$ such that $q \circ \tilde f_i = f \circ q$  for $i = 1, 2$ and such that $\tilde f(x_0) = y_0$.

The original map $f$ is an isometry of $Q$ if and only if $\tilde f$ is an isometry of $\mathbb{R}^2$.

The original map $f$ preserves $q(W)$ if and only if $\tilde f$ preserves $W$.
\label{lem-lift}
\end{lemma}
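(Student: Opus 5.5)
We will use covering space theory for the branched cover $q:\mathbb{R}^2\to Q$, which is a genuine covering map once the singular points are removed. Let $Z' = \mathbb{R}^2\setminus Z$ and $Q' = Q\setminus q(Z)$. Then $\Gamma$ acts freely on $Z'$. This holds because $\Gamma$ consists of translations (products of two half-turns about points of $Z$) together with half-turns about points of the form $z + \frac12\Lambda$, where $\Lambda$ is the lattice of $Z$; those centers are exactly the points of $Z$. So $q|_{Z'}: Z'\to Q'$ is a regular covering with deck group $\Gamma$.

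For the first statement, note that $f$ preserves $q(Z)$, so it restricts to a homeomorphism $Q'\to Q'$. We lift $f\circ q|_{Z'}: Z'\to Q'$ through the covering $q|_{Z'}$. By the lifting criterion, a lift sending $x_0$ to $y_0$ exists if and only if
\[
(f\circ q)_*\pi_1(Z',x_0)\subseteq q_*\pi_1(Z',y_0).
\]
The subgroup $q_*\pi_1(Z')$ is characterized as the set of loops in $Q'$ whose winding around each puncture $q(z)$ is even. Equivalently, it is the kernel of the homomorphism $\pi_1(Q')\to \Gamma\to \Gamma/\Gamma^{+}$ followed by the parities at each cone point.

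The key point is that a homeomorphism $f$ of $Q'$ that extends over the punctures sends small loops around punctures to conjugates of small loops around punctures, possibly inverted. Such a loop lifts to a closed loop if and only if it is traversed an even number of times, and this condition is preserved by $f$. Hence $f_*$ preserves the characteristic subgroup, and the lift $\tilde f$ exists on $Z'$. Uniqueness follows from the unique lifting property, since $Z'$ is connected. We then extend $\tilde f$ continuously over $Z$, using that $\tilde f$ is proper near each point of $Z$ and maps small punctured discs to small punctured discs. Since the extension lies over $f$, which preserves $q(Z)$, it preserves $Z = q^{-1}(q(Z))$. Applying the same construction to $f^{-1}$ shows $\tilde f$ is a homeomorphism.

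For $x_0$ with $q(x_0)$ singular, we pick a nearby nonsingular point $x_1$. Each lift sending $x_0$ to $y_0$ sends $x_1$ into one of the two preimages of $f(q(x_1))$ near $y_0$, which are swapped by the half-turn about $y_0$. There are exactly two such preimages because the stabilizer of $y_0$ in $\Gamma$ has order $2$. The first part then gives exactly two lifts, differing by composition with that half-turn.

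The isometry and $W$ statements are local. The map $q$ is a local isometry on $Z'$, so $f$ is a local isometry on $Q'$ exactly when $\tilde f$ is one on $Z'$. A local isometry of $\mathbb{R}^2$ minus a discrete set that extends continuously is a global isometry, since it preserves lengths of paths avoiding $Z$, and such paths approximate geodesics. For the converse direction, $f$ inherits the distance-nonincreasing property from the quotient metric, and the same holds for $f^{-1}$. The statement about $W$ follows from $W = q^{-1}(q(W))$, which holds because $\Gamma$ preserves $W$. The main obstacle is the group-theoretic check that $f_*$ preserves $q_*\pi_1(Z')$; we will argue it via the puncture-parity characterization above.
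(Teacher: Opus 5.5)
Your outline is the same as the paper's: lift through the regular covering $\mathbb{R}^2\setminus Z\to Q\setminus q(Z)$ by the lifting criterion, extend over $Z$ by continuity, handle singular $x_0$ through a nearby regular point and the half-turn about $y_0$, and use $W=q^{-1}(q(W))$. However, the step you single out as the crux rests on a false description of $q_*\pi_1(Z')$.

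The loops in $Q'$ with even winding about every puncture form the kernel of $\pi_1(Q')\to H_1(Q';\mathbb{Z}/2)\cong(\mathbb{Z}/2)^3$, a subgroup of index $8$. By contrast, $q_*\pi_1(Z')$ is the kernel of the holonomy map $\pi_1(Q')\to\Gamma$, and it has infinite index because the deck group $\Gamma$ is infinite. Concretely, let $a$ and $b$ be peripheral loops about two different cone points. Their holonomies are half-turns about distinct points $z_1,z_2\in Z$, so $ab$ acts as translation by $\pm 2(z_1-z_2)\neq 0$. Hence $(ab)^2$ has even winding about every puncture, yet it lifts to a path between two distinct points of a $\Gamma$-orbit, not to a closed loop. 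So checking that $f_*$ preserves the parity subgroup does not verify the lifting criterion; that check is automatic anyway, since the parity subgroup is characteristic. Your second description via $\Gamma\to\Gamma/\Gamma^{+}$ does not produce the right kernel either.

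The subgroup you need is $N$, the normal closure of the squares of the four peripheral loops. The paper's phrase ``subgroup generated by squares of these generators'' has to be read this way too. Both inclusions are short:
\begin{itemize}
\item $q_*\pi_1(Z')\subseteq N$: since $\mathbb{R}^2$ is simply connected and $Z$ is discrete, $\pi_1(\mathbb{R}^2\setminus Z)$ is generated by lassos around single points of $Z$. Each lasso projects to a conjugate of the square of a peripheral loop, because at points of $Z$ the map $q$ is locally the branched double cover $w\mapsto w^2$.
\item $N\subseteq q_*\pi_1(Z')$: the square of a peripheral loop lifts to a small closed loop around a point of $Z$, and the covering is regular.
\end{itemize}
Once $q_*\pi_1(Z')=N$ is established, your observation that $f$ carries each peripheral loop to a conjugate of a peripheral loop or its inverse gives $f_*(N)\subseteq N$ immediately. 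The rest of your argument then goes through.

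Two smaller points. First, the half-turns in $\Gamma$ are centered at $z+\Lambda=Z$, not at $z+\tfrac12\Lambda$, since the translations in $\Gamma$ are by $2\Lambda$. Second, in showing that $\tilde f$ is an isometry when $f$ is, approximating segments by paths avoiding $Z$ only yields $d(\tilde f x,\tilde f y)\le d(x,y)$. You must also apply this to $\tilde f^{-1}$, the lift of $f^{-1}$.

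Your converse (that $f$ and $f^{-1}$ are $1$-Lipschitz for the quotient metric) is a valid substitute for the paper's appeal to normality of $\Gamma$ in $\mathrm{Isom}(\mathbb{R}^2,Z)$. It needs the justification $\tilde f(\gamma y)\in\Gamma\,\tilde f(y)$.
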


\begin{proof} Suppose first that $x_0 \in \mathbb{R}^2$ does not project to a singular point.
Let $X$ be the subset of $Q $ formed by removing singular points: $X = Q - q(Z)$. Let $\tilde X$ be the space $R^2 -  Z$. Note that $q: \tilde X \to X$ is a covering space. Since $Q$ has four singular points corresponding to the four triangles of the $(3, 6)$-fullerene, the fundamental group $\pi_1(X)$ is a free group on three generators, and  $q_\star(\pi_1(\tilde X))$ is the subgroup generated by squares of these generators, where $q_*$ is the homomorphism of the fundamental group induced by $q$. Because $f$ is a homeomorphism that takes singular points to singular points, $f_\star \circ q_\star(\pi_1(\tilde X))$ is also the subgroup generated by squares of generators. Therefore, $(f\circ q)_\star(\pi_1(\tilde X))$ is contained in $q_\star(\pi_1(\tilde X))$, and so the map $f$, restricted to $X$, lifts to a unique map $\tilde f: \tilde X \to \tilde X$ such that $q \circ \tilde f = f \circ q$ and such that $\tilde f(x_0) = y_0$. The map $\tilde f$ can be extended to a map on all of $\mathbb{R}^2$ that preserves $Z$ by sending each point of $Z$ to the unique point of $Z$ that preserves continuity.

If, instead, $x_0 \in \mathbb{R}^2$ does project to a singular point, then $f(q(x_0))$ is also a singular point since $f$ takes singular points to singular points. Consider a small disk $U$ around $x_0$ and a point $x_1 \in U$. Draw a path $\alpha$ from $x_1$ to $x_0$ within $U$. Lift the path $f(q(\alpha))$ to a path $\beta$ that ends in $y_0$. Let $y_1$ be the initial point of the path $\beta$. Then $y_1 \in q^{-1}( f(q(x_1)))$, and $x_1$ does not project to a singular point, so the previous argument shows that there is a lift $\tilde f: \mathbb{R}^2 \to \mathbb{R}^2$ with $q \circ \tilde f = f \circ q$ and $\tilde f(x_1) = y_1$. Since $\tilde f(\alpha)$ lifts $f(q(\alpha))$, by uniqueness of lifts of paths from $X$ to $\tilde X$, $\tilde f(\alpha) = \beta$ and therefore $\tilde f(x_0) = y_0$ in $\mathbb{R}^2$. 

Let $\gamma \in \Gamma$ be a $180^\circ$ rotation around the point $y_0$. Then $\gamma \circ \tilde f$ is also a lift of $f$, since $q(\gamma(\tilde f)) = q(\tilde f) = f \circ q$. Therefore, $\tilde f$ and $\gamma \circ \tilde f$ are two distinct lifts of $f$ that take $x_0$ to $y_0$.

There are only two possible lifts of $f$ that take $x_0$ to $y_0$, since if $q \circ \tilde{f_1} = q\circ \tilde{f_2}$ and $\tilde{f_1}(x_0) = \tilde{f_2}(x_0) = y_0$, then $q  \circ \tilde{f_1}  \circ \tilde{f_2}^{-1} = f \circ q \circ \tilde{f_2}^{-1} = f \circ f^{-1} \circ q = q$, so $\tilde{f_1} \circ \tilde{f_2}^{-1} = \gamma$ for some $\gamma \in \Gamma$. Since $\tilde{f_1}(x_0) = \tilde{f_2}(x_0) = y_0$, $\gamma(y_0) = \tilde{f_1} \circ \tilde{f_2}^{-1} (y_0) = y_0$, so $\gamma$ must either be the identity or a $180^\circ$ rotation around $y_0$. So there are only two possible lifts.

Suppose that $\tilde{f}$ is an isometry. Using the definition of distance and the fact that $\Gamma$ is normal in  $\text{Isom}(\mathbb{R}^2, Z)$, we can see that $f$ is an isometry: for any $x, y \in \mathbb{R}^2$,  
\begin{align*}
d(f(q(x)), f (q(y))) & = d(q (\tilde{f}(x)), q(\tilde{f}(y))) \\
& = d(\Gamma.\tilde{f}(x), \Gamma.\tilde{f}(y)) \\
& = \inf_{\gamma \in \Gamma} d(\tilde{f}(x), \gamma \circ \tilde{f}(y)) \\
& = \inf_{\gamma \in \Gamma} d(\tilde{f}(x), \tilde{f}\circ \gamma(y)) \\
& = \inf_{\gamma \in \Gamma}d(x, \gamma(y)) = d(\Gamma.x, \Gamma.y) = d(q(x), q(y)) \\
\end{align*}
Conversely, if  $f$ is an isometry, then since $q$ is a local isometry away from singular points, $\tilde f$ will be an isometry on $\tilde X$ and therefore on all of $\mathbb{R}^2$.

Note that $q^{-1}(q(W)) = W$, since $x \in q^{-1}(q(W))$ implies that $\gamma(x) \in W$ for some $\gamma \in \Gamma$, so $x \in W$ also. Therefore, since $q \circ \tilde f = f \circ q$, if the map $f$ preserves $q(W)$, then $\tilde f$ preserves $W$.  Conversely, if $\tilde f: \mathbb{R}^2 \to \mathbb{R}^2$ preserves $W$, then since $f\circ q(W) = q\circ \tilde f(W) = q(W)$, $f$ preserves $q(W)$. 

\end{proof}

\begin{lemma} For $g \in \text{Isom}(\mathbb{R}^2, Z)$, let $\Phi(g): Q \to Q$ be defined by $\Phi(g): \Gamma.x \to \Gamma.(g(x))$. 
 Then  $\Phi$ is a homomorphism from  $\text{Isom}(\mathbb{R}^2, Z)$ to $\text{Isom}(Q, q(Z))$.  $\Phi$   induces an isomorphism between $\text{Isom}(\mathbb{R}^2, Z) /\Gamma$ and $\text{Isom}(Q, q(Z))$ and an isomorphism between $\text{Isom}(\mathbb{R}^2, Z \cup W)/\Gamma$ and  $\text{Isom}(Q, q(Z) \cup q(W))$.
\label{lem-isomorphOfGroups}
    \end{lemma}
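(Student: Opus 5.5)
We must first check that $\Phi$ is well defined and lands in $\text{Isom}(Q, q(Z))$. For $g \in \text{Isom}(\mathbb{R}^2, Z)$, Lemma~\ref{lem-normal} gives $g\Gamma = \Gamma g$. So if $\Gamma.x = \Gamma.x'$, say $x' = \gamma(x)$, then $g(x') = (g\gamma g^{-1})(g(x)) \in \Gamma.g(x)$. Hence $\Phi(g)$ is well defined. The computation in the proof of Lemma~\ref{lem-lift} shows $\Phi(g)$ is an isometry:
\[ d(\Gamma.g(x), \Gamma.g(y)) = \inf_{\gamma} d(g(x), \gamma g(y)) = \inf_{\gamma} d(g(x), g\gamma'(y)) = d(\Gamma.x, \Gamma.y), \]
again using normality. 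The map $\Phi(g)$ sends $q(Z)$ to $q(g(Z)) = q(Z)$. The homomorphism property $\Phi(gh) = \Phi(g)\Phi(h)$ is immediate from the definition.

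Next we identify the kernel. If $\gamma \in \Gamma$, then $\Phi(\gamma)$ is the identity, so $\Gamma \subseteq \ker\Phi$. Conversely, suppose $\Phi(g) = \mathrm{id}$. Choose a point $x_0$ not in $Z$ and not fixed by any nontrivial element of $\Gamma$; such points are generic, since $\Gamma$ acts freely off $Z$. Since $q(g(x_0)) = q(x_0)$, there is $\gamma \in \Gamma$ with $\gamma g(x_0) = x_0$. Both $\gamma g$ and the identity are lifts of $\mathrm{id}_Q$ preserving $Z$ and sending $x_0$ to $x_0$. By the uniqueness part of Lemma~\ref{lem-lift}, $\gamma g = \mathrm{id}$, so $g \in \Gamma$. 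Therefore $\ker\Phi = \Gamma$.

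For surjectivity, take $f \in \text{Isom}(Q, q(Z))$. An isometry of $Q$ is in particular a homeomorphism. It preserves $q(Z)$ by assumption; in any case the singular points are intrinsically the cone points of angle $\pi$. Pick a nonsingular $x_0$ and any $y_0 \in q^{-1}(f(q(x_0)))$. Lemma~\ref{lem-lift} gives a lift $\tilde f$ preserving $Z$ with $q \circ \tilde f = f \circ q$, and $\tilde f$ is an isometry because $f$ is. Thus $\tilde f \in \text{Isom}(\mathbb{R}^2, Z)$, and $\Phi(\tilde f) = f$ by the commuting square. The first isomorphism theorem then yields $\text{Isom}(\mathbb{R}^2, Z)/\Gamma \cong \text{Isom}(Q, q(Z))$.

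The $W$ version follows by restriction. $\Gamma$ is contained in $\text{Isom}(\mathbb{R}^2, Z \cup W)$ and is normal there. By the last clause of Lemma~\ref{lem-lift}, $g$ preserves $W$ if and only if $\Phi(g)$ preserves $q(W)$. Hence $\Phi$ maps $\text{Isom}(\mathbb{R}^2, Z\cup W)$ onto $\text{Isom}(Q, q(Z)\cup q(W))$: surjectivity holds because the lift of such an $f$ preserves $W$. The kernel is still $\Gamma$, which gives the second isomorphism.

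The main obstacle is the kernel argument. It depends on Lemma~\ref{lem-lift}'s uniqueness statement, which is stated for lifts of homeomorphisms preserving $q(Z)$ at nonsingular base points. We must therefore choose $x_0$ carefully so that uniqueness applies and the extra rotation ambiguity at singular points does not arise.
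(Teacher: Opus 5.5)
Your proposal is correct and follows the paper's proof essentially step for step. Well-definedness and the isometry property come from normality of $\Gamma$ (Lemma~\ref{lem-normal}), $\ker\Phi = \Gamma$ comes from uniqueness of lifts at a nonsingular base point, surjectivity comes from lifting via Lemma~\ref{lem-lift}, and the $Z \cup W$ version comes from the last clause of that lemma. Your handling of the kernel is slightly cleaner than the paper's, since you fix $x_0 \notin Z$ from the start and compare $\gamma g$ with the identity, whereas the paper adds a parenthetical remark about the case $x_0 \in Z$.
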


    \begin{proof}
Since $\Gamma$ is a normal subgroup of $\text{Isom}(\mathbb{R}^2, Z)$ by Lemma~\ref{lem-normal}, the map $\Phi(g): \Gamma.x \to \Gamma.(g(x))$ is well-defined. Since $g$ preserves $Z$, $\Phi(g)$ preserves $q(Z)$. 
Since $g$ is an isometry, it follows from the definition of distance on $Q$ that $\Phi(g)$ is an isometry:  $d(\Phi(g)(\Gamma.x), \Phi(g)(\Gamma.y)) = d(\Gamma.g(x), \Gamma.g(y)) = \inf_{\gamma \in \Gamma} d(g(x), \gamma \circ g(y)) = \inf_{\gamma \in \Gamma} d(g(x), g\circ \gamma(y)) = \inf_{\gamma \in \Gamma}d(x, \gamma.y) = d(\Gamma.x, \Gamma.y)$, using the fact that $\Gamma$ is normal in $\text{Isom}(\mathbb{R}^2, Z)$. It follows quickly from its definition that the map $\Phi$ is a homomorphism.
  
 To find $\text{Ker}(\Phi)$, suppose that $\Phi(g)$ is the identity map. Fix any point $x_0 \in \mathbb{R}^2$. We have that $g(x_0) = \gamma(x_0)$ for some $\gamma \in \Gamma$. Since $g$ and $\gamma$ are both lifts of the identity map on $\Gamma \backslash \mathbb{R}^2$, if $x_0 \notin Z$, then by Lemma~\ref{lem-lift}, $g = \gamma$. (If $x_0 \in Z$, then $g = \gamma \cdot \gamma'$ for some $\gamma' \in \Gamma$.) So $\text{Ker}(\Phi) \subseteq \Gamma$, and clearly $\Gamma \subseteq \text{Ker}(\Phi)$. So $\text{Ker}(\Phi) = \Gamma$  and the induced map $\text{Isom}(\mathbb{R}^2, Z) /\Gamma \to \text{Isom}(Q, q(Z))$ is injective. 

To see that the map $\Phi$ is surjective, use Lemma~\ref{lem-lift} to lift any isometry of $Q$ that preserves singular points to an isometry of $\mathbb{R}^2$. 

 By Lemma~\ref{lem-lift}, $\Phi(g): Q \to Q$ preserves $q(W)$ if and only if $g: \mathbb{R}^2 \to \mathbb{R}^2$ preserves $W$. Therefore, $\Phi$ takes $ \text{Isom}(\mathbb{R}^2, Z \cup W)$ to $\text{Isom}(Q, q(Z) \cup q(W))$. 
 Since $\Gamma = \text{Ker}(\Phi) \subseteq \text{Isom}(\mathbb{R}^2, Z \cup W)$, $\Phi$ induces an isomorphism between $\text{Isom}(\mathbb{R}^2, Z \cup W)/\Gamma$ and  $\text{Isom}(Q, q(Z) \cup q(W))$.

\end{proof}

Note that for any subgroup $K \subseteq \text{Isom}(\mathbb{R}^2, Z)$ that contains $\Gamma$, the map $\Phi$ induces an isomorphism from $K / \Gamma$ to a subgroup of $\text{Isom}(Q,  q(Z))$. Let $Stab_{\Phi(K)} (\Gamma.x)$ represent the stabilizer of the point $\Gamma.x \in Q$ in this subgroup $\Phi(K)$.

\begin{lemma}
Suppose that $K$ is a subgroup of $\text{Isom}(\mathbb{R}^2, Z)$ that contains $\Gamma$ and suppose $x \in \mathbb{R}^2$. Then $(Stab_K(x) \cdot \Gamma)/\Gamma$ is isomorphic to $Stab_{\Phi(K)} (\Gamma.x)$.
\label{lem-stab}
\end{lemma}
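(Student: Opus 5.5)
The plan is to define a homomorphism from $Stab_K(x)$ into $Stab_{\Phi(K)}(\Gamma.x)$, show it is surjective, and identify its kernel. The map is the restriction of $\Phi$. For $g \in Stab_K(x)$ we have $\Phi(g)(\Gamma.x) = \Gamma.g(x) = \Gamma.x$, so $\Phi$ sends $Stab_K(x)$ into $Stab_{\Phi(K)}(\Gamma.x)$. Since $\Gamma$ is normal in $\text{Isom}(\mathbb{R}^2, Z)$ by Lemma~\ref{lem-normal}, and hence normal in $K$, the set $Stab_K(x)\cdot\Gamma$ is a subgroup of $K$ containing $\Gamma$. Because $\Gamma = \text{Ker}(\Phi)$ by Lemma~\ref{lem-isomorphOfGroups}, $\Phi$ descends to an injective homomorphism
\[
(Stab_K(x)\cdot\Gamma)/\Gamma \to \Phi(K),
\]
whose image equals $\Phi(Stab_K(x))$. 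This image lies in $Stab_{\Phi(K)}(\Gamma.x)$.

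The main step is surjectivity onto the stabilizer. Take an element of $Stab_{\Phi(K)}(\Gamma.x)$; it has the form $\Phi(g)$ for some $g \in K$, with $\Gamma.g(x) = \Gamma.x$. Then there is some $\gamma \in \Gamma$ with $g(x) = \gamma(x)$. Set $g' = \gamma^{-1} g$. This element lies in $K$ because $\Gamma \subseteq K$, and it satisfies $g'(x) = x$, so $g' \in Stab_K(x)$. Since $\gamma \in \text{Ker}(\Phi)$, we get $\Phi(g') = \Phi(g)$. Moreover $g = \gamma g' \in \Gamma\cdot Stab_K(x) = Stab_K(x)\cdot\Gamma$, using normality once more.

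Putting these together, the induced map is injective, and its image is exactly $Stab_{\Phi(K)}(\Gamma.x)$. This gives the claimed isomorphism.

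The only point needing care is the equality $\text{Ker}(\Phi) = \Gamma$, which is already established in Lemma~\ref{lem-isomorphOfGroups}. Apart from that, the argument is pure group theory, so I expect no real obstacle; the argument is the same whether or not $x$ lies in $Z$.
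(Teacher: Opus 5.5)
Your proof is correct and follows essentially the same route as the paper: you restrict $\Phi$ to $Stab_K(x)\cdot\Gamma$, check that the image stabilizes $\Gamma.x$, get surjectivity by correcting a preimage $g$ with the $\gamma\in\Gamma$ satisfying $g(x)=\gamma(x)$, and get injectivity from $\text{Ker}(\Phi)=\Gamma$. Your explicit remark that $Stab_K(x)\cdot\Gamma$ is a subgroup, because $\Gamma$ is normal in $K$, is a small detail the paper leaves implicit.
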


\begin{proof} Consider the isomorphism $\Phi: \text{Isom}(\mathbb{R}^2, Z) /\Gamma \to \text{Isom}(Q, q(Z))$defined in Lemma~\ref{lem-isomorphOfGroups}. Fix a point $x \in \mathbb{R}^2$ and a subgroup $K \subseteq \text{Isom}(\mathbb{R}^2, Z)$ that contains $\Gamma$, and consider the induced map $\Phi: (Stab_{K}(x)\cdot \Gamma) / \Gamma  \to \text{Isom}(Q, q(Z))$. 

First, the image of this induced map lies  within $Stab_{\Phi(K)} (\Gamma.x)$ by the following reasoning. For any $g.\Gamma \in (Stab_{K}(x) \cdot \Gamma) / \Gamma$, where $g \in Stab_{K}(x)$, by definition of $\Phi$, $\Phi(g.\Gamma)$ is the map $\Phi(g)$ defined by $\Phi(g)(\Gamma.x) = \Gamma.g(x)$. Since $g(x) = x$, $\Phi(g)(\Gamma.x) = \Gamma.x$. So $\Phi(g)$ stabilizes $\Gamma.x$. Since $g \in K$, $\Phi(g) \in Stab_{\Phi(K)} (\Gamma.x)$, as wanted.

Next, the induced map surjects onto $Stab_{\Phi(K)} (\Gamma.x)$. If $j$ is an isometry of $Q$ that is $\Phi(k)$ for some $k \in K$  and that stabilizes $\Gamma.x$, then  $j(\Gamma.x) = \Phi(k)(\Gamma.x) = \Gamma.k(x) = \Gamma.x$. So for some $\gamma \in \Gamma$, $\gamma(k(x)) = x$. So $\gamma \cdot k \in Stab_K (x)$, so $k \in \Gamma \cdot Stab_K(x) = Stab_K(x) \cdot \Gamma$.

Finally, the induced map is injective, simply because the original map $\Phi$ has kernel $\Gamma$. 
\end{proof}

To simplify notation, for the following corollary and lemmas, let $H$ be another name for $\text{Isom}(\mathbb{R}^2, Z \cup W)$. 
%NOTE: It is not always true that the isometries that preserve $Z$ automatically preserve $W$. As an example: Let v_1 be the straight up vector from (0, 0) to (0, 1) and v_2 be the SW to NE vector from (0, 0) to (sqrt(3)/2, 1/2). Then 2v_1 + v_2 is perp to 4v_1 - 5v_2 so these form a rectangular grid, which has mirror lines in the middle of rectangles, but those mirror lines are not at 60 degree angles so can’t be mirror lines of hexagonal grid

\begin{corollary}
     $\Phi(H)$ is the set of isometries of $Q$ that preserve $q(W)$. These isometries also preserve $q(Z)$. $(Stab_H(x) \cdot \Gamma)/\Gamma \cong Stab_{\Phi(H)} (\Gamma.x)$.
\label{cor-stabcor}
\end{corollary}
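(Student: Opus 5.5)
The corollary has three assertions, and I would derive each from Lemmas~\ref{lem-lift}, \ref{lem-isomorphOfGroups} and \ref{lem-stab} with $H = \text{Isom}(\mathbb{R}^2, Z \cup W)$.

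\emph{Step 1: $\Phi(H)$ consists of isometries of $Q$ that preserve $q(W)$ and $q(Z)$.} Take $h \in H$. Then $h$ preserves $Z$, so Lemma~\ref{lem-isomorphOfGroups} says $\Phi(h)$ is an isometry of $Q$ preserving $q(Z)$. Also $h$ preserves $W$, and $h$ is itself a lift of $\Phi(h)$. The last clause of Lemma~\ref{lem-lift} then gives that $\Phi(h)$ preserves $q(W)$.

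\emph{Step 2: every isometry $f$ of $Q$ preserving $q(W)$ lies in $\Phi(H)$.} This is the step I expect to need care. Lemma~\ref{lem-lift} assumes from the start that $f$ preserves the singular points $q(Z)$, so I must first show that any isometry of $Q$ preserving $q(W)$ automatically preserves $q(Z)$.

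I would argue this intrinsically. The cone points $q(Z)$ are exactly the points of $Q$ at which $Q$ fails to be locally isometric to a Euclidean disk, since the total cone angle there is $\pi$. An isometry must therefore carry $q(Z)$ to itself.

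As a combinatorial backup, I could instead note the following. Since $f$ preserves $q(W)$, it permutes the faces of $Q - q(W)$. The triangular faces are exactly the images of special hexagons, and they are metrically distinguished from the hexagonal faces, for instance by their boundary length being half that of a hexagon. The centre of each triangular face is its unique cone point, so it is fixed by the face's isometric identification, and hence $f$ maps it to another cone point.

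With $q(Z)$ preserved, Lemma~\ref{lem-lift} applies. Choose a nonsingular basepoint and lift $f$ to $\tilde f$. This lift preserves $Z$, is an isometry, and preserves $W$, so $\tilde f \in H$ and $\Phi(\tilde f) = f$. Together with Step 1, this shows $\Phi(H)$ is exactly the set of isometries of $Q$ preserving $q(W)$, and all of them also preserve $q(Z)$.

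\emph{Step 3: the stabilizer isomorphism.} The subgroup $\Gamma$ lies in $H$, since $180^\circ$ rotations about points of $Z$ preserve both $Z$ and the hexagonal grid, as noted in Section~\ref{sec-background}. Also $H \subseteq \text{Isom}(\mathbb{R}^2, Z)$. So Lemma~\ref{lem-stab} applies with $K = H$ and gives $(Stab_H(x)\cdot\Gamma)/\Gamma \cong Stab_{\Phi(H)}(\Gamma.x)$ directly.
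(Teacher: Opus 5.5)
Your proof is correct and follows the paper's route. It first observes that an isometry of $Q$ preserving $q(W)$ automatically preserves $q(Z)$, then uses Lemma~\ref{lem-isomorphOfGroups} (which you unpack into its lifting ingredients from Lemma~\ref{lem-lift}) and Lemma~\ref{lem-stab} with $K = H$. Your cone-angle argument for why $q(Z)$ is preserved is, if anything, a cleaner justification than the paper's one-line remark that such isometries take triangles to triangles.
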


\begin{proof}
    Note that $Isom(Q, q(W) \cup q(Z)) = Isom(Q, q(W))$, since isometries of $Q$ that preserve $q(W)$ must take triangles to triangles, and therefore preserve $q(Z)$. The conclusions now follow from  Lemmas~\ref{lem-isomorphOfGroups} and \ref{lem-stab}.
\end{proof}

\begin{lemma} For any two points $z_1, z_2 \in Z$, $Stab_H(z_1) \cong Stab_H(z_2)$. 
\label{lem-singularPoints}
\end{lemma}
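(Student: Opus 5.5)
The plan is to find an element of $H = \text{Isom}(\mathbb{R}^2, Z \cup W)$ that carries $z_1$ to $z_2$. Once we have such an element, the conclusion is immediate: stabilizers of points in the same orbit are conjugate, so conjugation by that element gives the isomorphism. The natural candidate is a translation, because $Z$ is a lattice (the vertex set of a parallelogram grid). Translations are also easier to control than rotations when we need to preserve $W$.

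First, let $t$ be the translation by the vector $z_2 - z_1$. Since $Z$ is a lattice coset, $t(Z) = Z$. We also need $t(W) = W$.

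To see this, recall that every point of $Z$ is the center of a hexagon of the regular hexagonal tiling. The centers of the hexagons form a triangular lattice $L$, and the tiling $W$ is invariant under translation by any vector in $L - L$. Both $z_1$ and $z_2$ lie in $L$, so $z_2 - z_1$ is a translation symmetry of the hexagonal grid. Hence $t$ preserves $W$, and so $t \in H$.

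Alternatively, we can avoid the lattice fact. The map $\rho_{z_2}\circ\rho_{z_1}$ is the composition of the $180^\circ$ rotations about $z_2$ and $z_1$, and it equals translation by $2(z_2 - z_1)$. That only gives the doubled vector, though, so the direct argument via hexagon centers is the one to use.

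Next, we check that $t \, Stab_H(z_1) \, t^{-1} = Stab_H(z_2)$. If $g$ fixes $z_1$, then $tgt^{-1}$ fixes $t(z_1) = z_2$, and it lies in $H$ because $H$ is a group. The inverse conjugation by $t^{-1}$ reverses this. So the map $g \mapsto tgt^{-1}$ is an isomorphism $Stab_H(z_1) \to Stab_H(z_2)$.

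The main point that needs justification is that the difference of two hexagon centers is a symmetry of the tiling, i.e. that $Z \subseteq L$ with $L$ a translation orbit. I would state this explicitly from the regular hexagonal grid: all hexagon centers form a single orbit under the translation subgroup of the tiling's symmetry group, so any two centers differ by a translation symmetry. Everything else is formal.
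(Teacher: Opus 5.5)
Your proof is correct and follows the same route as the paper: conjugate by an element of $H$ that carries $z_1$ to $z_2$. The paper only asserts that such an element exists, so your explicit choice of the translation by $z_2 - z_1$ fills in a step the paper leaves implicit; it preserves the lattice coset $Z$, and it preserves $W$ because both points are hexagon centers.
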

\begin{proof}
    For any points $z_1, z_2 \in Z$, there is an isometry $h \in H$ that takes $z_1$ to $z_2$. The homomorphism from $Stab_H(z_1)$ to $Stab_H(z_2)$ given by $g \mapsto h \circ g \circ h^{-1}$ is an isomorphism since its inverse is given by $k \mapsto h^{-1} \circ k \circ h$. 
\end{proof}

\begin{lemma}
    For any point $x \in \mathbb{R}^2 - Z$, $Stab_{\Phi(H)}(q(x)) \cong Stab_H(x)$. For any point $x \in Z$, $Stab_H(x)$ contains a subgroup $C_2$ of order 2 generated by a $180^\circ$ rotation around $x$, and $Stab_{\Phi(H)}(q(x)) \cong Stab_H(x)/C_2$.
    \label{lem-stabReduction}
\end{lemma}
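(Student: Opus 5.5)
The plan is to apply Lemma~\ref{lem-stab} with $K = H$, which is legitimate because $\Gamma \subseteq H$. By Corollary~\ref{cor-stabcor}, $Stab_{\Phi(H)}(q(x)) \cong (Stab_H(x)\cdot\Gamma)/\Gamma$. By the second isomorphism theorem this is isomorphic to $Stab_H(x)/(Stab_H(x)\cap\Gamma)$. The product $Stab_H(x)\cdot\Gamma$ is a subgroup because $\Gamma$ is normal in $\text{Isom}(\mathbb{R}^2,Z)$ by Lemma~\ref{lem-normal}, and hence normal in $H$. So the whole lemma reduces to identifying the intersection $Stab_H(x)\cap\Gamma = Stab_\Gamma(x)$ in the two cases $x\notin Z$ and $x\in Z$.

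The key step is to describe the elements of $\Gamma$ explicitly. A composition of two $180^\circ$ rotations about points $z_1,z_2$ is the translation by $2(z_2-z_1)$. So every element of $\Gamma$ is either a translation by a vector in $2L$, where $L$ is the lattice of differences of points of $Z$, or a $180^\circ$ rotation about a point of the form $z + v$ with $v\in L$. Since $Z$ is a translate of $L$ (it is the vertex set of a parallelogram grid), these centers $z+v$ are exactly the points of $Z$. A nontrivial translation fixes no point. A $180^\circ$ rotation fixes exactly its center, which lies in $Z$.

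With this description, the two cases follow directly. For $x\notin Z$, $Stab_\Gamma(x)$ is trivial, so $Stab_{\Phi(H)}(q(x))\cong Stab_H(x)$. For $x\in Z$, $Stab_\Gamma(x)$ consists of the identity and the $180^\circ$ rotation about $x$; this group is $C_2$. This rotation lies in $\Gamma\subseteq H$ and fixes $x$, so $C_2\subseteq Stab_H(x)$. It is normal in $Stab_H(x)$, being an intersection with the normal subgroup $\Gamma$. Therefore $Stab_{\Phi(H)}(q(x))\cong Stab_H(x)/C_2$.

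The only step needing care is the description of $\Gamma$: I must justify that the rotation centers produced by composing generators stay inside $Z$. This comes from the computation that conjugating the rotation about $z$ by a translation of $2L$, or composing it with such a translation, gives a rotation about $z+v$ with $v\in L$. As an alternative, I can avoid the full description: an element of $\Gamma$ fixing a point $x\notin Z$ projects, via the covering argument of Lemma~\ref{lem-lift}, to a lift of the identity fixing a nonsingular point, so by uniqueness of lifts it is the identity.
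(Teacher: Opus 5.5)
Your proposal is correct and follows the paper's proof. Both apply Lemma~\ref{lem-stab} with $K=H$, rewrite $(Stab_H(x)\cdot\Gamma)/\Gamma$ as $Stab_H(x)/(Stab_H(x)\cap\Gamma)$, and identify the intersection as trivial for $x\notin Z$ and as the $C_2$ generated by the $180^\circ$ rotation about $x$ for $x\in Z$; your explicit description of $\Gamma$ (translations by $2L$ together with $180^\circ$ rotations about points of $Z$) justifies this intersection, which the paper only asserts, and you correctly call it the trivial group where the paper writes $\emptyset$.
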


\begin{proof}
  By Lemma~\ref{lem-stab}, for each $x \in \mathbb{R}^2$,  $Stab_{\Phi(H)}(q(x)) \cong (Stab_H(x)\cdot \Gamma)/\Gamma \cong Stab_H(x)  / 
  (Stab_H(x) \cap \Gamma)$. If $x \notin Z$, then $Stab_H(x) \cap \Gamma = \emptyset$, so $Stab_{\Phi(H)}(q(x)) \cong Stab_H(x)$. If $x \in Z$, then $Stab_H(x) \cap \Gamma$ consists of the $180^\circ$ rotation around $x$ together with the identity map, so $ Stab_{\Phi(H)}(q(x)) \cong Stab_H(x)/C_2$.
\end{proof}

In the following proposition, $C_n$ represents the cyclic group of order $n$ generated by a rotation and $D_m$ represents the dihedral group with $2m$ elements generated by a rotation of order $m$ and a reflection. As before, $H$ represents the subgroup of $Isom(\mathbb{R}^2)$ that fixes both $Z$ and $W$.

\begin{proposition}
Suppose $H$ has wallpaper symmetry type $n_1 n_2 \cdots n_k \star  m_1 m_2 \cdots m_\ell$, in orbifold notation, where $k, \ell \geq 0$ and $n_i, m_j \geq 2$ for $1 \leq i \leq k$ and $1 \leq j \leq \ell$. Let $z$ be a point in $Z$.

    \begin{enumerate}
        \item If $Stab_H(z) = C_{n_i}$ with $n_i > 2$ then $\Phi(H)$ has spherical symmetry type \[n_1 n_2 \cdots n_{i-1} \frac{n_{i}}{2} n_{i+1}  \cdots n_k \star  m_1 \cdots m_\ell \]
        \item If $Stab_H(z) = D_{m_j}$ with $m_j > 2$, then $\Phi(H)$ has spherical symmetry type \[n_1 n_2  \cdots n_k \star  m_1 \cdots m_{j-1} \frac{m_j}{2} m_{j+1} \cdots m_\ell \] .
        \item If $Stab_H(z) = C_{n_i}$ with $n_i = 2$ then $\Phi(H)$ has spherical symmetry type \[n_1 n_2 \cdots n_{i-1} n_{i+1}  \cdots n_k \star  m_1 \cdots m_\ell \]
        \item If $Stab_H(z) = D_{m_j}$ with $m_j = 2$, then $\Phi(H)$ has spherical symmetry type \[n_1 n_2  \cdots n_k \star  m_1 \cdots m_{j-1} m_{j+1} \cdots m_\ell \] .
    \end{enumerate}
\label{lem-generalReduction}
    \end{proposition}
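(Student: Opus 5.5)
The plan is to read off the orbifold signature of $\Phi(H)\cong H/\Gamma$ (Lemma~\ref{lem-isomorphOfGroups}) by comparing the cone points and corner points of the spherical orbifold $\Phi(H)\backslash Q$ with those of the Euclidean orbifold $H\backslash\mathbb{R}^2$. In orbifold notation, a cone point of order $n$ corresponds to a class of points whose stabilizer is $C_n$, and a corner point of order $m$ on a mirror boundary corresponds to a class with stabilizer $D_m$. So the task is to show that the orbifold $\Phi(H)\backslash Q$ has the same underlying structure as $H\backslash\mathbb{R}^2$, except at the image of the single point class $H.z$.

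First I will note that the quotient $\Phi(H)\backslash Q$ is identified with $H\backslash\mathbb{R}^2$ as a topological space. Because $\Gamma\subseteq H$ is normal, the map $\mathbb{R}^2\to Q\to \Phi(H)\backslash Q$ factors through $H$-orbits: $\Phi(h)(\Gamma.x)=\Gamma.h(x)$, so $\Phi(H)$-orbits of $\Gamma$-orbits are exactly the $H$-orbits. Mirror lines in $H$ therefore map to mirror lines in $\Phi(H)$, since $\Gamma$ contains no reflections. Consequently the counts of boundary components and the handle/crosscap data are unchanged. What can change is only the local group at each point.

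Next I will compare the local groups using Lemma~\ref{lem-stabReduction}.
\begin{itemize}
\item For $x\notin Z$, the stabilizer is unchanged, so every cone or corner point whose preimage lies off $Z$ keeps its order.
\item For $x=z\in Z$, the stabilizer becomes $Stab_H(z)/C_2$, where $C_2$ is the central half-turn. By Lemma~\ref{lem-singularPoints} this computation is the same for every point of $Z$.
\end{itemize}
I also need that all points of $Z$ lie in one $H$-orbit. This holds because the translations of the parallelogram lattice, which are products of two half-turns in $\Gamma$, lie in $H$ and act transitively on $Z$. So exactly one orbifold point is affected.

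Then I will handle the four cases.
\begin{itemize}
\item If $Stab_H(z)=C_n$, then $C_n/C_2\cong C_{n/2}$, with $n$ necessarily even since it contains the half-turn. The cone point of order $n$ becomes one of order $n/2$, or a regular point when $n=2$.
\item If $Stab_H(z)=D_m$, then the half-turn lies in the rotation subgroup, so $m$ is even. The quotient $D_m/C_2\cong D_{m/2}$ is still generated by reflections, because the images of the mirrors through $z$ remain mirrors. The corner of order $m$ becomes a corner of order $m/2$, and when $m=2$ the quotient is a single reflection, i.e.\ an ordinary mirror point, so the corner disappears.
\end{itemize}
This gives the four stated symbols.

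The main obstacle is making the identification of orbifold structures rigorous. I need that $\Phi(H)\backslash Q$ is an orbifold whose singular data are exactly these stabilizers, and that in the $D_m/C_2$ case the group really acts as a dihedral corner group rather than a cone group. For the latter I will argue geometrically: a reflection $r$ through $z$ is not in $\Gamma$, and its coset acts on the cone neighborhood of $q(z)$ as a reflection fixing an arc. The rest follows from the bijection of orbits above.
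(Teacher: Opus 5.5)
Your plan is essentially the paper's argument. You compare stabilizers point by point via Lemma~\ref{lem-stabReduction}, so that cone and corner points off $Z$ keep their orders. You note that $Z$ is a single $H$-orbit, so exactly one listed representative is affected. You use that $Stab_H(z)/C_2$ is $C_{n/2}$ or $D_{m/2}$ because the $C_2$ is generated by the half-turn, not a reflection. You also identify the underlying space of $\Phi(H)\backslash Q$ with $H\backslash\mathbb{R}^2$, so that boundary components, handles, crosscaps and the cyclic order of corners are visibly unchanged. That makes explicit what the paper compresses into ``the conclusion of the proposition follows,'' and it is a worthwhile addition.

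One justification is wrong as written: transitivity of $H$ on $Z$. Write $Z=z_0+\Lambda$. The half-turns about $a$ and $b$ compose to the translation by $2(b-a)$, so products of two half-turns in $\Gamma$ are only the translations by $2\Lambda$. These have four orbits on $Z$. In fact $\Gamma$ itself has exactly the four orbits $z_0+\lambda+2\Lambda$, which is why $Q$ has four cone points. If your stated reason were correct, $Q$ would have a single cone point.

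The claim you need is still true, for a different reason. For $z_1,z_2\in Z$, translation by $z_2-z_1\in\Lambda$ preserves $Z$, since $Z$ is a coset of $\Lambda$. It preserves $W$, since $z_1,z_2$ are hexagon centers and every difference of hexagon centers is a translational symmetry of the regular hexagonal tiling. So this translation lies in $H$, though generally not in $\Gamma$, and $H$ is transitive on $Z$. With that repair your argument goes through.
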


    \begin{proof}
    Consider the symmetry type $n_1 n_2 \cdots n_k \star  m_1 m_2 \cdots m_\ell$. From \cite{conway2016symmetries}, for each number $n_i$ before the $\star $, there is a point $x_i \in \mathbb{R}^2$ with $Stab_H(x_i) = C_{n_i}$. For each $m_j$ after the star, there is a point $y_j \in \mathbb{R}^2$ with $Stab_H(y_j) = D_n$. The points $x_1, x_2, \cdots, x_k, y_1, y_2, \cdots,  y_\ell$ all lie in distinct orbits of $H$, and all orbits of points with non-trivial stabilizers are represented in this list. For each $x_i \notin Z$, by Lemma~\ref{lem-stabReduction},  $Stab_{\Phi(H)}(q(x_i)) = Stab_H(x_i)  =  C_{n_i}$. Likewise, for each $y_j \notin Z$, $Stab_{\Phi(H)}(q(y_j)) = Stab_H(y_j) = D_{m_i}$.
    
    Exactly one point among $x_1, x_2 \cdots , x_k, y_1, y_2, \cdots , y_\ell$ is in $Z$, by the following reasoning.  For any $z \in Z$, $Stab_H(z)$ is non-trivial since it contains the $180^\circ$ rotation around $z$, and $z$ is not in the orbit of any $x \notin Z$ since $H$ preserves $Z$. So at least one point of $Z$ must be among the representatives $x_1, \cdots, x_k, y_1, \cdots, y_\ell$ is in $Z$. Since all of $Z$ is in the same orbit of $H$, no more than one representative from $Z$ is listed. Call this point $v$.  
    
    By Lemma~\ref{lem-stabReduction}, $Stab_{\Phi(H)}(q(v))$ is isomorphic to either $C_{n_i}/C_2 = C_{n_i/2}$ or $D_{m_j}/C_2 = D_{m_j/2}$, depending on whether $Stab_H(v)$ is cyclic or dihedral, using the fact that the $C_2$ subgroup of $D_{m_j}$ is generated by $180^\circ$ rotation rather than reflection. Note that $q(x_1), q(x_2), \cdots , q(x_k),$ $ q(y_1), q(y_2), \cdots , q(y_\ell)$ all lie in different orbits of $\Phi(H)$ by Lemma~\ref{lem-lift}, since $x_1, x_2, \cdots, x_k,$ $ y_1, y_2, \cdots, y_\ell$ all lie in different orbits of $H$. The conclusion of the proposition follows. 
    
    \end{proof}

We can now prove Proposition~\ref{prop-upstairsDownstairsSymm}, restated here for convenience. Note that the symmetry type of the hexagonal cover of a $(3,6)$-fullerene refers to the set of isometries of $\mathbb{R}^2$ that preserve not only the hexagonal tiling but also the vertices of the superimposed parallelogram grid that correspond to the centers of special hexagons. This symmetry group is $Isom(\mathbb{R}^2, Z \cup W)$. The symmetry type of the $(3, 6)$-fullerene refers to the isometry group of the quotient space $Q$ that preserves $q(W)$, and automatically preserves $q(Z)$, since isometries of $Q$ must take triangles to triangles. This symmetry group is $\text{Isom}(Q, q(Z) \cup q(W))$.

\vspace{0.5 cm}

\noindent {\bf Proposition 3.2.}
\begin{it}
\begin{enumerate}

   \item A (3,6)-fullerene has $\star 332$ symmetry if its hexagonal tiling cover has $\star 632$ symmetry.

    \item A (3,6)-fullerene has $332$ symmetry if its hexagonal tiling cover has $632$ symmetry.

    \item A (3,6)-fullerene has $2\star 2$ symmetry if its hexagonal tiling cover has $2 \star 22$ symmetry.
    
    \item A (3,6)-fullerene has $\star 222$ symmetry if its hexagonal tiling cover has $\star 2222$ symmetry.

       \item A (3,6)-fullerene has $222$ symmetry if its hexagonal tiling cover has $2222$ symmetry.

\end{enumerate}
\end{it}

\begin{proof}
As in other proofs in this section, let $H$ be the isometry group of $\mathbb{R}^2$ that preserves $Z$ and $W$, i.e. the symmetry group of the hexagonal cover of the fullerene. So by Corollary~\ref{cor-stabcor}, $\Phi(H)$ is the set of isometries of the $(3, 6)$-fullerene $Q$ that preserve $q(W)$, i.e. the symmetry group of the $(3,6)$-fullerene. 

(1): Suppose that the hexagonal tiling cover has $\star 632$ symmetry. Recall that $q(W)$ consists of four triangles and some number of hexagons \cite{green2024polyhedra}. So $\Phi(H)$ must permute the four triangles, which means that  $\phi(H)$ does not contain any elements of order 6. Therefore, if $x \in \mathbb{R}^2$ is one of the points with $Stab_H(x) = D_6$, then $x$ must be in $Z$, since otherwise by Lemma~\ref{lem-stabReduction}, $Stab_{\Phi(H)}(q(x)) = Stab_H(x)  = D_6$ which is impossible. It now follows from Lemma~\ref{lem-generalReduction} that $\Phi(H)$ has symmetry type $\star 332$.

(2): Suppose that the hexagonal tiling cover has $632$ symmetry. As in the proof of part (1), if $x \in \mathbb{R}^2$ is one of the points with $Stab_H(x) \cong C_6$, then $x$ must be in $Z$. By Lemma~\ref{lem-generalReduction}, $\Phi(H)$ has symmetry type $332$. 

(3):   If $H$ has $2\star 22$ symmetry, then it must have mirror symmetry. By Proposition~6.1 of \cite{green2025enumerate}, there must be a mirror line that ``bisects spines" and therefore goes through points of $Z$. It follows that for points $z \in Z$, $Stab_H(z)$ must be $D_2$ and not $C_2$. Therefore, by Lemma~\ref{lem-generalReduction}, $\Phi(H)$ has symmetry type $2\star 2$. 

(4): Suppose that the hexagonal tiling cover has $\star 2222$ symmetry. For any point $z \in Z$, $Stab_H(z)$ must be $D_2$, since the only non-trivial stabilizers in a $\star 2222$ symmetry type are $D_2$, and $Stab_H(z)$ is non-trivial by Lemma~\ref{lem-stabReduction}. By Lemma~\ref{lem-generalReduction}, $\Phi(H)$ has symmetry type $\star 222$.

Proof of (5): Suppose that the hexagonal tiling cover has $2222$ symmetry. For any point $z \in Z$, $Stab_H(z)$ must be $C_2$, since the only non-trivial stabilizers in a 2222 symmetry type are $C_2$, and $Stab_H(z)$ is non-trivial by Lemma~\ref{lem-stabReduction}. By Lemma~\ref{lem-generalReduction}, $\phi(H)$ has symmetry type $222$.
\end{proof}

\bibliographystyle{plain} 
\bibliography{trihexSymmetries}

\end{document}